\newcommand{\weg}[1]{}
\newcommand{\spann}{\mathrm{span}}
\newcommand{\trace}{\mathrm{trace}}
\newcommand{\grad}{\mathrm{grad}}
\newcommand{\Id}{\mathrm{Id}}
\theoremstyle{plain}
\newtheorem{thm}{Theorem}
\newtheorem*{thm*}{Theorem}
\newtheorem{lem}{Lemma}
\newtheorem{prop}{Proposition}
\newtheorem{cor}{Corollary}
\newtheorem*{con}{Convention}
\theoremstyle{definition}
\newtheorem{defn}{Definition}
\newtheorem{exmp}{Example}
\theoremstyle{remark}
\newtheorem{rem}{Remark}
\title[Proof of the Yano-Obata Conjecture for $h$-projective transformations]{Proof of the Yano-Obata Conjecture for holomorph-projective transformations}
  \author{Vladimir S. Matveev  and  Stefan Rosemann}
\thanks{Institute of Mathematics, FSU Jena, 07737 Jena Germany,\\  vladimir.matveev@uni-jena.de, stefan.rosemann@uni-jena.de}
\thanks{partially supported by  GK 1523 of DFG}
\begin{document}

\begin{abstract}
We prove  the classical Yano-Obata conjecture by showing that the connected component of the group of holomorph-projective transformations of  a closed, connected Riemannian K\"ahler manifold  consists of isometries unless the metric has constant positive holomorphic curvature. 
\end{abstract}

\maketitle

\section{Introduction}
\label{sec:hpr}
\subsection{Definitions and main result.}
Let $(M,g,J)$ be a Riemannian K\"ahler manifold of real dimension $2n\ge 4$. We denote by $\nabla$ the Levi-Civita connection of  $g$. All objects we consider are assumed to be sufficiently  smooth. 
\begin{defn}
\label{def:hplanar}
A regular curve $\gamma:I\to M$ is called \emph{$h$-planar},  if there exist functions $\alpha,\beta:I\rightarrow\mathbb{R}$ such that the ODE 
\begin{equation}\label{eq:eq1} 
\nabla_{\dot \gamma(t)}\dot \gamma(t)=\alpha\dot \gamma(t)+\beta J(\dot \gamma(t))  
\end{equation} 
holds for all $t$, where $\dot \gamma =\tfrac{d}{dt} \gamma$.
\end{defn}
In certain papers, $h$-planar curves are called complex geodesics. The reason is that 
if we view the action of $J$ on the tangent space as the  multiplication with the imaginary unit $i$, 
the property of a curve $\gamma$ to be  $h$-planar  means that  $\nabla_{\dot \gamma(t) }\dot\gamma(t)$
is  proportional to 
$\dot \gamma(t)$  with a complex coefficient of the proportionality $\alpha(t) + i\cdot \beta(t)$. Recall that geodesics (in an arbitrary, not necessary arc length  parameter $t$) 
 of a metric  can be defined as curves   satisfying  the equation $\nabla_{\dot\gamma(t)} \dot\gamma(t) = \alpha(t) \dot \gamma(t)$. 

\begin{exmp} 
\label{ex:ex3}  Consider the 
 complex projective space
\begin{align}
\mathbb{C}P (n)=\{1\mbox{-dimensional complex subspaces of }\mathbb{C}^{n+1}\}\nonumber
\end{align}
with the standard complex structure $J$ and the standard  Fubini-Study metric $g_{FS}$.  Then, a regular curve  $\gamma$ is $h$-planar, if and only if it lies in a projective line. 

Indeed, it is well known that every  projective line $L$ is a totally geodesic submanifold of real 
 dimension two such that its tangent space is invariant with respect to $J$.  
 Since $L$ is totally geodesic, for every regular curve $\gamma:I\to L\subseteq  \mathbb{C}P (n)$ we have 
  $\nabla_{\dot \gamma(t)}\dot \gamma(t)\in T_{\gamma(t)}L$. Since $L$ is two-dimensional, the vectors  $\dot\gamma(t), J(\dot\gamma(t))$ form a basis in $T_{\gamma(t)}L$.  Hence, $\nabla_{\dot \gamma(t)}\dot \gamma(t)=\alpha(t) \dot \gamma(t)+  \beta(t) J(\dot \gamma(t))$  for certain $\alpha(t), \beta(t)$ as we claimed.

 Conversely, given a  regular curve $\sigma$ in $\mathbb{C}P(n)$ that satisfies equation \eqref{eq:eq1} for some functions $\alpha$ and $\beta$, we consider the  projective line $L$ such that $\sigma(0)\in L$ and $\dot \sigma(0)\in T_{\sigma(0)}L$.  Solving the initial value problem $\gamma(0)=\sigma(0)$ and $\dot \gamma(0)=\dot\sigma(0)$ for ODE  \eqref{eq:eq1} with these  functions $\alpha$ and $\beta$ on $(L, g_{FS|L}, J_{|L})$, we find a curve $\gamma$ in $L$. Since $L$ is totally geodesic, 
 this curve satisfies equation \eqref{eq:eq1} on $(\mathbb{C}P(n),g_{FS},J)$. The uniqueness of a solution of an ODE implies that $\sigma$ coincides with $\gamma$ and,  hence, is contained in $L$.
\end{exmp}

\begin{defn}
\label{defn:hpro}
Let  $g$ and $\bar{g}$  be Riemannian metrics on $M$ such that they 
are Kähler with respect to the same complex structure $J$. They are  called \textit{$h$-projectively equivalent}, if every  $h$-planar curve of $g$ is an $h$-planar curve of $\bar{g}$ and vice versa. 
\end{defn}

\begin{rem}
\label{rem:affishpro}
If two K\"ahler metrics $g$ and $\bar{g}$ on $(M,J)$ are \emph{affinely equivalent} (i.e., if their Levi-Civita connections $\nabla$ and $\bar{\nabla}$ coincide), then they are $h$-projectively equivalent.  Indeed, the  equation (\ref{eq:eq1}) for the first and for the second metric coincide if $\nabla=\bar \nabla$.
\end{rem}

\begin{defn}
\label{def:hprotrafo}
Let $(M,g,J)$ be a K\"ahler manifold. A diffeomorphism $f:M\rightarrow M$ is called an \emph{$h$-projective transformation}, if $f$ is  \emph{ holomorphic} (that is, if  $f_*(J)= J$), 
 and if $f^{*}g$ is $h$-projectively equivalent to $g$. A vector
 field $v$ is called $h$-projective, if its local flow $\Phi^{v}_{t}$ consists of  (local)  $h$-projective transformations. Similarly, a diffeomorphism $f:M\rightarrow M$ is called an \emph{affine transformation}, if it preserves the Levi-Civita connection of $g$. A vector
 field $v$ is \emph{affine}, if its local flow  consists of  (local)  affine transformations. An $h$-projective transformation (resp. $h$-projective vector
field) is called \emph{essential}, if it is not  an affine transformation (resp. affine vector
field).
\end{defn}
Clearly, the set of all $h$-projective transformations of $(M,g,J)$ is a group. As it was shown in \cite{Ishihara1957} and \cite{Yoshimatsu}, it is a finite-dimensional Lie group. We denote it by  $\mbox{HProj}(g,J)$. By Remark \ref{rem:affishpro}, holomorphic affine transformations and holomorphic isometries are $h$-projective transformations, $\mbox{Iso}(g, J) \subseteq \mbox{Aff}(g,J)\subseteq\mbox{HProj}(g, J)$. Obviously, the same is  true for the connected components of these groups containing the identity transformation: $\mbox{Iso}_0(g, J) \subseteq \mbox{Aff}_0(g,J)\subseteq\mbox{HProj}_0(g, J)$.

\begin{exmp}[Generalisation of the Beltrami construction from \cite{Beltrami,M2005}]
\label{ex:cpn}
Consider a non-degenerate complex linear transformation $A\in Gl_{n+1}(\mathbb{C})$ and the induced bi-holomorphic diffeomorphism  $f_A:\mathbb{C}P(n) \to \mathbb{C}P(n)$.   Since the mapping $f_A$ sends   projective lines to projective lines, it sends   $h$-planar curves (of the Fubiny-Study metric $g_{FS}$) to $h$-planar curves, see Example \ref{ex:ex3}.  Then, the  
  pullback $g_A:= f_A^*g_{FS}$ is $h$-projectively equivalent to  $g_{FS}$ and  $f_A$ is an $h$-projective transformation. Note that the metric $g_A$ coincides with $g_{FS}$ (i.e., $f_A$ is  an isometry),
   if and only if $A$ is proportional to a unitary matrix. 
\end{exmp}

We see that for 
 $(\mathbb{C}P(n),g_{FS},J)$  we have $\mbox{Iso}_0 {\not =} \mbox{HProj}_0$.
 Our main result is

\begin{thm}[Yano-Obata conjecture]
\label{thm:obata}
Let $(M,g,J)$ be a closed, connected Riemannian K\"ahler manifold of real dimension $2n\geq 4$. Then, 
$\mbox{Iso}_0(g, J) = \mbox{HProj}_0(g, J)$  unless $(M,g,J)$ can be covered by $(\mathbb{C}P(n),c\cdot g_{FS},J)$ for some positive  constant $c$.
\end{thm}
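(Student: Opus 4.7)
The plan is to linearize the problem by associating to each $h$-projectively equivalent metric a solution of a first-order overdetermined linear PDE, following the $h$-projective analogue (due to Mikes and collaborators) of Sinjukov's projective construction. Concretely, to each K\"ahler metric $\bar g$ that is $h$-projectively equivalent to $g$ one attaches a symmetric, $J$-invariant $(0,2)$-tensor $A$ (built algebraically from $\bar g$ and $g$ with an appropriate $\det$-normalization) satisfying an equation of the schematic form
\begin{equation*}
\nabla_X A \;=\; X^{\flat} \otimes \Lambda + \Lambda \otimes X^{\flat} + (JX)^{\flat} \otimes J\Lambda + J\Lambda \otimes (JX)^{\flat},
\end{equation*}
where $\Lambda$ is a vector field determined by $A$. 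The space $\mathcal{V}(g,J)$ of solutions is finite-dimensional of dimension $D(g,J) \geq 1$ (the \emph{degree of mobility}); since an essential $h$-projective vector field $v$ produces a non-trivial one-parameter family of $h$-projectively equivalent metrics that is not tangent to the ray $\R \cdot g$, its existence forces $D(g,J) \geq 2$.

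The proof then branches according to the value of $D(g,J)$. In the high-mobility case $D(g,J) \geq 3$, compatibility of several independent solutions of the $A$-equation forces integrability conditions amounting to the vanishing of the $h$-projective Weyl-type tensor; in the K\"ahler setting this is equivalent to constant holomorphic sectional curvature. Combined with a compactness/positivity argument (showing that closed K\"ahler manifolds of non-positive holomorphic curvature admit no essential $h$-projective vector fields), this identifies $(M,g,J)$, up to covering and constant scaling, with $(\mathbb{C}P(n), g_{FS}, J)$.

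The core of the argument is the minimal-mobility case $D(g,J) = 2$, where $\mathcal{V}(g,J)$ is spanned by $g$ and a single additional solution $A$. The prolongation of the $A$-equation implies that the symmetric functions of the eigenvalues of $A$ (regarded as a field of $g$-self-adjoint endomorphisms) produce commuting first integrals of the geodesic flow, giving a Liouville-integrable structure on $T^{*}M$. The goal in this case is to use closedness of $M$ to force $\nabla A \equiv 0$, so that the $h$-projective equivalence is in fact affine, contradicting essentiality of $v$. The strategy is to combine a maximum principle applied to the eigenvalues of $A$ along well-chosen curves (integral curves of $\Lambda$ or $h$-planar geodesics) with a local splitting theorem \`a la Bolsinov-Matveev that describes the metric near generic points as a warped-type product indexed by the eigenvalues of $A$.

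The main obstacle will be this minimal-mobility case. Two technical difficulties stand out: the eigenvalue multiplicities of $A$ may jump along $M$, so the eigendistributions are non-smooth on a singular stratum and must be handled by separate local analysis; and passing from the locally integrable product-like structure to a global rigidity statement on the closed manifold requires careful topological input (using compactness to close up orbits and apply maximum principles) together with Bochner-type identities forcing the eigenvalues of $A$ to be constant. Once constancy of the eigenvalues is established, one reads off $\nabla A = 0$ directly from the structure equation, which completes the proof.
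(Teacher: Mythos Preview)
Your setup is correct and matches the paper: the linearization via the $A$-equation, the finite-dimensional solution space $\mathrm{Sol}(g)$, and the trichotomy on $D(g)$. The case $D(g)=1$ is trivial, and for $D(g)\ge 3$ the paper simply invokes the companion paper \cite{FKMR}; your sketch via an ``$h$-projective Weyl tensor'' is not the argument used there and would need independent justification, but this is a side issue.

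The genuine gap is in the $D(g)=2$ case, where your stated goal is the wrong one. You propose to show that closedness forces the eigenvalues of $A$ to be constant, hence $\nabla A\equiv 0$, contradicting essentiality. But this is precisely what does \emph{not} happen: the paper proves (Lemma~\ref{lem:spectrumD=2}) that if an essential $h$-projective vector field $v$ exists and $D(g)=2$, then $A$ has \emph{exactly one non-constant} eigenvalue $\rho$ (of multiplicity two) and at most two constant ones. So a maximum-principle argument aiming at constancy of all eigenvalues cannot succeed, and ``Bochner-type identities forcing the eigenvalues of $A$ to be constant'' is a dead end.

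What actually drives the $D(g)=2$ argument is the vector field $v$ itself. Because $D(g)=2$, the Lie derivative $\mathcal{L}_v$ acts on the two-dimensional space $\mathrm{Sol}(g)$, which yields the key identity $\mathcal{L}_v A = A^2 + c_1 A + c_0\,\Id$ (Lemma~\ref{lem:liederivA}). This produces an explicit ODE $\dot\rho=\rho^2+c_1\rho+c_0$ for each eigenvalue along the flow of $v$; boundedness on the closed manifold together with the global ordering of eigenvalues (your integrals do enter here) forces $\rho(t)$ to be a $\tanh$ and rules out more than one non-constant eigenvalue. One then shows that the two-dimensional eigendistribution $E_A(\rho)=\mathrm{span}\{\Lambda,J\Lambda\}$ is totally geodesic, computes the induced metric explicitly, and uses boundedness of its sectional curvature (closedness again) to pin down the remaining constants. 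The payoff is not $\nabla A=0$ but rather the \emph{prolonged} system
\[
\nabla_X\Lambda=\mu X + B\,A(X),\qquad \nabla_X\mu = 2B\,g(X,\Lambda),\qquad B<0,
\]
valid globally. Differentiating once more gives Tanno's third-order equation for $\mu$, and Tanno's theorem \cite{Tanno1978} then forces constant positive holomorphic sectional curvature, i.e.\ $(M,g,J)$ is covered by $(\mathbb{C}P(n),c\cdot g_{FS},J)$. Thus in the $D(g)=2$ case the conclusion is that one lands in the exceptional clause of the theorem, not that one reaches a contradiction via $\nabla A=0$.
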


\begin{rem} The above Theorem is not true locally; one can construct counterexamples. We conject  that Theorem  \ref{thm:obata} is also true if we replace closedness by completeness; but dealing with this 
 case will require a lot of  work. In particular, one will need to generalize the results of \cite{FKMR} to the complete metrics. 
\end{rem}

\subsection{History and motivation.}
\label{subsec:hismot}
$h$-projective equivalence was  introduced  by  Otsuki and Tashiro in  \cite{Otsuki1954,Tashiro1956}.
They have shown that the classical projective equivalence is not interesting in the  K\"ahler situation since only  simple examples are possible, and suggested $h$-projective equivalence as an  interesting object of study instead.  This suggestion  was very fruitful.   During 60th-70th,   the theory of $h$-projectively equivalent metrics and $h$-projective transformations  was  one of the main research topics  in Japanese and Soviet (mostly Odessa and Kazan) differential geometry schools, see for example the survey \cite{Mikes} with more than one hundred fifty references. Two classical  books  \cite{Sinjukov,Yanobook} contain chapters on $h$-projectivity.

New interests to $h$-projective equivalence is due to its connection with the so called {\it hamiltonian $2$-forms} defined  and investigated in Apostolov et al  \cite{ApostolovI,ApostolovII,ApostolovIII,ApostolovIV}. Actually, a  hamiltonian $2$-form   is essentially the same as a $h$-projectively equivalent 
metric $\bar g$: it is easy to  see that  the defining equation \cite[equation $(12)$]{ApostolovI} of a hamiltonian $2$-form is algebraically equivalent to the equation \eqref{f}, which is a reformulation of the condition ``$\bar g$ is $h$-projectively equivalent to $g$'' to the language of PDE, see Remark \ref{apostol}. The motivation of Apostolov et al  to study hamiltonian $2$-forms is different from that of Otsuki and Tashiro and is  explained in \cite{ApostolovI,ApostolovII}. Roughly speaking, they observed that many interesting problems on  Kähler manifolds lead to  hamiltonian $2$-forms and suggested to study them. The motivation is justified in \cite{ApostolovIII,ApostolovIV}, where they indeed constructed  new interesting and useful examples of Kähler manifolds. There is also a direct connection between $h$-projectively equivalent metrics and  conformal Killing  (or twistor) $2$-forms  studied in \cite{Moroianu,Semmelmann0,Semmelmann1}, see Appendix A of \cite{ApostolovI} for details. 

In private communications with the authors of \cite{ApostolovI,ApostolovII,ApostolovIII,ApostolovIV} we got informed that they did not know that the object they considered  was studied before under another name. Indeed, they re-derived certain facts that were well known in the theory of $h$-projectively equivalent metrics. 
On the other hand, the papers    \cite{ApostolovI,ApostolovII,ApostolovIII,ApostolovIV} contain several solutions of the problems studied in the framework of $h$-projectively equivalent metrics, for example the local \cite{ApostolovI} and global \cite{ApostolovII} description of $h$-projectively equivalent metrics --- previously, only special cases of such   descriptions (see, for example, \cite{Kiyo1997}) were known. 

Additional  interest to $h$-projectivity is  due to a connection between $h$-projectively equivalent metrics and integrable geodesic flows: it appears that the existence of $\bar g$ that is $h$-projectively equivalent to $g$ allows us to construct quadratic and
linear integrals for the geodesic flow of $g$. The existence of quadratic integrals was proved by Topalov \cite{Top2003}.  Under certain nondegeneracy assumptions,   the quadratic integrals of Topalov are as considered by Kiyohara in \cite{Kiyo1997}; the existence of such integrals immediately implies  the existence of Killing  vector fields. In the general situation, the existence of the Killing vector fields follows from \cite{ApostolovI} and, according to a private conversation, was known to Topalov. Altogether, in the most nondegenerate case studied by Kiyohara, we obtain $n$ quadratic and $n$ linear integrals on a $2n$-dimensional manifold; the integrals are in involution and are functionally independent so 
 the geodesic flow of the metric is Liouville-integrable.   In the present paper, we will actively use the quadratic integrals. We will also use one Killing vector field whose existence is well-known. 

Note that 
the attribution of the Yano-Obato conjecture  to Yano and Obata  is in folklore - we did not find a paper of them where they state  this conjecture.  It is clear though that both Obata and Yano (and many other geometers)  tried to prove this statement and did this     under certain additional assumptions, see below.  The conjectures of similar type were standard in 60th-70th, in the time when Yano and Obata were active (and it was also, unfortunately, standard in that time not to publish conjectures or open questions). For example, another famous conjecture of that time states that an essential group of conformal transformations of a Riemannian manifold is possible  if and only if the manifold is conformally equivalent to the standard sphere or to the Euclidean space; this conjecture is attributed to Lichnerowicz and Obata  though  it seems that neither Lichnerowicz nor Obata published it as a  conjecture or a question; it was solved in Alekseevskii \cite{Alekseevskii1972}, Ferrand \cite{Ferrand} and Schoen \cite{Schoen}.  One more example is the so-called  projective Lichnerowicz-Obata conjecture stating that  a  complete Riemannian manifold, such that the connected  component of  the neutral element of 
the projective group  contains not only   isometries,   has constant positive sectional curvature. This conjecture was proved in \cite{M2004bis,M2004,CMH,Matveev2007}. Though this conjecture is also attributed in folklore to Lichnerowicz and Obata, neither Lichnerowicz nor Obata  published  this conjecture (however, this particular conjecture  was published as   ``a classical conjecture'' in  \cite{hasegawa,nagano,Yamauchi1}). 	
In view of these two examples, it would be natural to call the Yano-Obata conjecture the Lichnerowicz-Obata conjecture for $h$-projective transformations.

 Special cases of Theorem \ref{thm:obata} were known before. For example, under the additional  assumption that the scalar curvature of $g$ is constant, the conjecture was proven in \cite{HiramatuK,Yano1981}. The  case when  the Ricci tensor of $g$  vanishes or  is  covariantly constant  was proven earlier in 
  \cite{Ishihara1957,Ishihara1960,Ishihara1961}.  Obata \cite{obata} and Tanno \cite{Tanno1978} proved this conjecture under the assumption that the $h$-projective vector field lies in the so-called $k$-nullity space of the curvature tensor. Many  local results related to  essential $h$-projective transformations are listed in the survey \cite{Mikes}. For example, in \cite{DomMik1978,Sakaguchi} it was shown that  locally symmetric spaces of non-constant holomorphic sectional curvature do not admit $h$-projective transformations, even locally.
  
 A very important special case of Theorem \ref{thm:obata}   was obtained in the recent paper  \cite{FKMR}.  Their, the Yano-Obata conjecture was proved under the additional  assumption that the degree of mobility (see Definition \ref{def:deg}) is $\ge 3$.  We will essentially use the results of \cite{FKMR} in our paper. Actually, we consider that both papers, \cite{FKMR} and the present one,  are equally important for the proof of the Yano-Obata conjecture.  The methods of \cite{FKMR} 
     came from the theory of overdetermined PDE-systems of finite type and are 
      very different from the methods of the present paper.  
 
\weg{\bf NEU!!!\\
  As a surprising fact for the authors, it turned out recently that $h$-projective geometry was examined in an equivalent language in \cite{ApostolovI,ApostolovII,ApostolovIII,ApostolovIV}. These series of papers studied a Kähler manifold that admit a hamiltonian $2$-form. The relation to $h$-projective geometry is now due to the fact that such $2$-forms are in one to one correspondence to Kähler metrics that are $h$-projectively equivalent to the given one. More precisely, the defining equation for hamiltonian $2$-forms (see \cite[equation $(12)$]{ApostolovI}) is equivalent to the main equation of $h$-projective geometry \eqref{f} in the present paper; solutions of the first equation can easily be transformed into solutions of the second equation and vice versa. The equivalence to the theory of hamiltonian $2$-forms yields another motivation for the study of $h$-projective geometry. Indeed, hamiltonian $2$-forms appear in the framework of extremal Kähler metrics, strongly conformally Einstein Kähler metrics and conformal Killing $2$-forms (which have been studied in \cite{Moroianu,Semmelmann}). Actually, these relations were the main motivation in \cite{ApostolovI} for introducing the notion of a hamiltonian $2$-form. 

The basic result obtained in \cite{ApostolovI} shows that the existence of a hamiltonian $2$-form for a given Kähler structure implies the existence of a family of Killing vector fields for the geodesic flow of the Kähler metric. Under some additional non-degeneracy condition, this shows that the manifold is a toric manifold. Independently from the considerations in \cite{ApostolovI}, the same results for the non-degenerate case have been obtained in the language of $h$-projective geometry in \cite{Kiyo1997,Kiyohara2010,Top2003}. Moreover, in \cite{Kiyohara2010} it was shown that under these non-degeneracy condition, the geodesic flow of the Kähler metric is completely integrable. Originally, the results in \cite{Kiyo1997} were obtained by studying a special class of integrable systems called Kähler-Liouville manifolds. The application to $h$-projective geometry was done in \cite{Kiyohara2010} by using the results of \cite{Top2003}. There, it was shown that a Kähler manifold, admitting a metric that is $h$-projectively equivalent to the given one, generates a whole family of quadratic integrals in the momentas for the geodesic flow of the Kähler metric. In our paper, we will intensively use the existence of the family of quadratic integrals, however, we will not need the whole family of Killing vector fields presented in \cite{ApostolovI} (the only member of this family which we shall use was already known in the early days of the theory of $h$-projective equivalence, see \cite{DomMik1978}).\\NEU!!!\\ }

\section{The main equation of $h$-projective geometry and the scheme of the proof of Theorem \ref{thm:obata}}
\label{sec:main}
\subsection{Main equation of $h$-projective geometry.} 
Let $g$ and $\bar{g}$ be two Riemannian  (or pseudo-Riemannian) 
 metrics on $M^{2n\ge 4}$ that are K\"ahler with respect to the same complex structure $J$.  
  We  consider the induced isomorphisms $g:TM\rightarrow T^{*}M$ and $\bar{g}^{-1}:T^{*}M\rightarrow TM$. Let us  introduce the $(1,1)$-tensor $A(g,\bar{g})$  by the formula  
 \begin{equation}
  \label{eq:a}
  A(g,\bar{g})= \left(\frac{\det{\bar{g}}}{\det g}\right)^\frac{1}{2(n+1)}
   \bar{g}^{-1} \circ g: TM\to TM
\end{equation}
(in coordinates, the matrix of $\bar{g}^{-1} \circ g$ 
is the product of the inverse matrix of $\bar g$  and the matrix of $g$).

Obviously, $A(g,\bar{g})$ is non-degenerate, complex (in the sense that $A\circ J=J\circ A$) and self-adjoint with respect to both metrics. Let  $\nabla$ be the Levi-Civita connection of $g$. 
\begin{thm}[\cite{DomMik1978}] The metric 
  $\bar{g}$ is $h$-projectively equivalent to $g$, if and only if there exists a vector
  field $\Lambda$ such that $A=A(g,\bar{g})$ given by  \eqref{eq:a}  satisfies 
  \begin{equation}
\label{f}    
    (\nabla_{X}A)Y=g(Y,X)\Lambda + g(Y,\Lambda)X + g(Y,JX)\bar{\Lambda}+g(Y,\bar{\Lambda})JX,
  \end{equation}
for all $x \in M$ and all  $X,Y\in T_xM$, where $\bar{\Lambda}=J(\Lambda)$.
\end{thm}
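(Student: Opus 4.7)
The plan is to translate $h$-projective equivalence into a statement about the difference of the Levi-Civita connections and then convert that statement, via the definition of $A$, into the PDE~(\ref{f}). First, $h$-planarity of $\gamma$ amounts to $\nabla_{\dot\gamma}\dot\gamma \in \spann\{\dot\gamma, J\dot\gamma\}$, so if $g$ and $\bar g$ share all $h$-planar curves then the symmetric $(1,2)$-tensor $T(X,Y) := \bar\nabla_X Y - \nabla_X Y$ satisfies $T(X,X)\in\spann\{X, JX\}$ for every $X$. A polarization argument produces one-forms $\phi, \psi$ with
\[ T(X,Y) = \phi(X)Y + \phi(Y)X + \psi(X)JY + \psi(Y)JX. \]
Since both $\nabla$ and $\bar\nabla$ annihilate $J$ (the K\"ahler assumption), $T$ must satisfy $T(X,JY)=JT(X,Y)$; substituting the polarized expression and comparing coefficients forces $\psi = -\phi\circ J$, so the geometric hypothesis is encoded by a single one-form $\phi$.

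The second step is to differentiate $A$. Writing $A = \rho B$ with $B := \bar g^{-1}\circ g$ and $\rho := (\det\bar g/\det g)^{1/(2(n+1))}$, I would differentiate the defining relation $\bar g(BY, Z) = g(Y,Z)$ with $\nabla$ and use $\bar\nabla\bar g = 0$ to replace $\nabla\bar g$ by $T$; this yields an explicit formula for $(\nabla_X B)Y$ as a polynomial in $\phi$, $J$ and $B$. To handle the conformal factor $\rho$, I would take the trace of the connection-difference formula: in coordinates $T^i_{ki} = 2n\,\phi_k$ (the $J$-contributions drop out because $\mathrm{tr}\,J = 0$), and combining with the classical identity $\bar\Gamma^i_{ki} - \Gamma^i_{ki} = \partial_k\log\sqrt{\det\bar g/\det g}$ gives $d\log\rho = \frac{2n}{n+1}\phi$. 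Substituting back into $\nabla_X A = (X\rho)B + \rho\,\nabla_X B$ and collecting terms, the exponent $1/(2(n+1))$ is tuned precisely so that the trace contributions cancel and the remainder reassembles into the right-hand side of~(\ref{f}) with $\Lambda$ a universal constant multiple of $A(\phi^\sharp)$, where $\sharp$ is raising by $g$.

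For the converse, given $A$ and $\Lambda$ satisfying (\ref{f}), I would define $\bar g$ by inverting the formula $A = \rho\,\bar g^{-1}\circ g$; the $J$-invariance of $A$ ensures that $\bar g$ is Hermitian with respect to $J$, and running the computation of the second step in reverse recovers $\phi$ from $\Lambda$ (through a single trace of~(\ref{f})) and shows that $T$ has the polarized form above, forcing the $h$-planar curves of $g$ and $\bar g$ to coincide. The main obstacle is the index-heavy computation in the forward direction: making the four vector-valued terms on the right-hand side of~(\ref{f}) appear with exactly the correct coefficients requires careful use of the K\"ahler identities $JA = AJ$, $J^*g = g$ and $\bar g(AX,Y) = \bar g(X,AY)$, and it is precisely this calculation that pins down the exponent $1/(2(n+1))$ in the definition of $A$.
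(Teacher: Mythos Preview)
The paper does not supply a proof of this theorem: it is quoted from \cite{DomMik1978} and used as a black box. (The appendix derives a closely related invariant reformulation, but not this statement.) So there is nothing in the paper to compare your argument against; what I can do is check your sketch on its own terms.

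Your overall strategy is the standard one and is correct: reduce $h$-projective equivalence to the connection-difference formula
\[
\bar\nabla_XY-\nabla_XY=\phi(X)Y+\phi(Y)X-\phi(JX)JY-\phi(JY)JX
\]
(this is exactly the paper's equation~\eqref{eq:hproconn}), then differentiate $A=\rho\,\bar g^{-1}\circ g$ and match terms. There is, however, a concrete slip in your trace step. You claim $T^{i}{}_{ki}=2n\,\phi_k$ because ``the $J$-contributions drop out since $\trace J=0$''. Only one of the two $J$-terms has a bare $\trace J$; the other contracts to $\psi_iJ^{i}{}_{k}=-\phi_lJ^{l}{}_{i}J^{i}{}_{k}=\phi_k$. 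Together with the $\delta$-terms this gives
\[
T^{i}{}_{ki}=(2n+2)\,\phi_k=2(n+1)\,\phi_k,
\]
and hence $d\log\rho=2\phi$, not $\tfrac{2n}{n+1}\phi$. This is precisely the place where the exponent $\tfrac{1}{2(n+1)}$ is pinned down, so the error is not cosmetic: with your value the cancellation you advertise in the next paragraph would fail. Once you correct the trace, the rest of your outline goes through as written. (A minor phrasing issue: in the converse direction $\bar g$ is already given, so you do not ``define'' it from $A$; you simply run the forward computation backwards to recover $\phi$ from $\Lambda$ via a trace of~\eqref{f} and conclude that $\bar\nabla-\nabla$ has the required form.)
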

\begin{rem}  
\label{rem:lambda2}
One may consider the  equation (\ref{f}) as a linear PDE-system on the unknown $(A,\Lambda)$; the coefficients in this system depend on the metric $g$. Indeed, 
if the  equation  is fulfilled for $X,Y$ being basis vectors, it is fulfilled for all vectors, see also the formula \eqref{eq:hpr} below. 

One can also consider equation \eqref{f} as a linear PDE-system on the $(1,1)$-tensor $A$ only, since the components of $\Lambda$ can be obtained from the components of $\nabla A$ by linear algebraic manipulations.
  Indeed, fix $X$ and calculate the trace of the $(1,1)$-tensors on the left and right-hand side of (\ref{f}). The trace of the right-hand side  equals $4g(\Lambda,X)$. Clearly, 
  the trace of $\nabla_{X}A$ is  $\trace(\nabla_{X}A)=X(\trace\,A)$. 
  Then,   $\Lambda=\grad\lambda$, where the function $\lambda$ is equal  to $\frac{1}{4}\trace\,A$.
In what follows, we prefer the last point of view and speak about  a self-adjoint, complex solution $A$ of equation (\ref{f}), instead of explicitly mentioning  the pair $(A,\Lambda)$. 
  \end{rem}
\begin{rem}
\label{rem:lambda}  
Let $g$ and $\bar{g}$ be two $h$-projectively equivalent K\"ahler metrics and $A(g,\bar{g})$ the corresponding solution of (\ref{f}).
It is easy to see that  $g$ and $\bar{g}$ are affinely equivalent, if and only if the corresponding vector
field $\Lambda$  vanishes  identically on $M$.
  \end{rem}
\begin{rem} The original and more standard form of the equation \eqref{f} uses index (tensor) notation and 
reads   \begin{equation}
    \label{eq:hpr}
    a_{i j,k}=\lambda_i g_{j k} + \lambda_j g_{i k} -
    \bar{\lambda}_{i}J_{jk}-\bar{\lambda}_{j}J_{ik}.
  \end{equation}
Here $a_{ij}$, $\lambda_i$ and $\bar \lambda_i$
 are related to $A, \Lambda$ and $\bar \Lambda$ by the formulas 
 $a_{ij}=g_{ip}A^{p}_{j}$, $\lambda_{i}=g_{ip}\Lambda^{p}$ and $\bar \lambda_{i}=-g_{ip}\bar \Lambda^{p}$. 
\end{rem}
\begin{rem} 
\label{rem:PDE} 
  Note that formula (\ref{eq:a}) is invertible if $A$ is non-degenerate: 
 the metric $\bar g$  can be reconstructed from $g$ and $A$ by 
 \begin{equation} \bar g=(\det\,A)^{-\frac{1}{2}}g\circ A^{-1}\label{inverse}  \end{equation} 
(we understand $g$ as the mapping $g:TM\to T^*M$; in coordinates, 
 the matrix of  $g\circ A^{-1}$ is the product of the matrices of $g$ and $A^{-1}$).

Evidently, if $A$ is $g$-self-adjoint and complex, 
$\bar g$  given by \eqref{inverse} is symmetric and invariant with respect to the complex structure.
It can be checked  by direct calculations that if $g$ is Kähler and if $A$ is a non-degenerate, $g$-self-adjoint and complex $(1,1)$-tensor satisfying (\ref{f}), then $ \bar g$ is also Kähler with respect to the same complex structure and is $h$-projectively equivalent to $g$.  
  
   Thus, the set of K\"ahler metrics, $h$-projectively equivalent to $g$, is essentially the same as the set of self-adjoint, complex (in the sense $J\circ A= A\circ J$)   solutions of (\ref{f}) (the only difference is the case when $A$ is degenerate, but since adding $\mbox{const}\cdot \Id$ to $A$ does not change the property of $A$ to be a solution, this difference is not important).
\end{rem}

\begin{rem} \label{apostol} 
As we have already mentioned in Section \ref{subsec:hismot}, equation \eqref{f} is equivalent to the defining equation for a hamiltonian $2$-form (see \cite[equation $(12)$]{ApostolovI}). Indeed, for a complex and self-adjoint solution $A$ of \eqref{f},  the  $2$-form $\Phi(X,Y):=g(JAX,Y)$ is hamiltonian in the sense of \cite{ApostolovI}.
  \end{rem}

By  Remark \ref{rem:lambda2}, the  equation (\ref{f}) is a system of linear PDEs  on the $(1,1)$-tensor $A$. 
\begin{defn}
\label{def:deg} We  denote by $\mbox{Sol}(g)$ the linear space of complex, self-adjoint solutions of equation (\ref{f}).
The \emph{degree of mobility $D(g)$} of a K\"ahler metric $g$ is  the dimension of the space $\mbox{Sol}(g)$.
\end{defn}
\begin{rem}
We note that $1\leq D(g)<\infty$. Indeed, since $\Id$ is always a solution of equation (\ref{f}), we have $D(g)\ge 1$. We will not use the fact that $D(g) <\infty$; a proof of this statement can be found in \cite{FKMR} or in \cite{DomMik1978}. 
\end{rem}
Let us now show that the degree of mobility is the same for two $h$-projectively equivalent metrics: we construct an explicit isomorphism. 
\begin{lem}
\label{lem:degree}
Let $g$ and $\bar{g}$ be two $h$-projectively equivalent K\"ahler metrics on $(M,J)$. Then the solution spaces $\mbox{Sol}(g)$ and $\mbox{Sol}(\bar{g})$ are isomorphic. The isomorphism is given by
\begin{align}
A_{1}\in \mbox{Sol}(g)\longmapsto A_{1}\circ A(g,\bar{g})^{-1}\in\mbox{Sol}(\bar{g}),\nonumber
\end{align}
 where $A(g,\bar{g})$ is constructed by \eqref{eq:a}. In particular, $D(g)$ is equal to $D(\bar{g})$. 
\end{lem}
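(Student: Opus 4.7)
My plan is to first observe that the map in question is manifestly linear, and that the analogous formula $A_{2}\mapsto A_{2}\circ A(\bar g,g)$ from $\mbox{Sol}(\bar g)$ to $\mbox{Sol}(g)$ --- which makes sense because $A(\bar g,g)=A(g,\bar g)^{-1}$ is immediate from the definition \eqref{eq:a} --- is an obvious candidate for the inverse. Thus the essential task reduces to showing that for $A_{1}\in\mbox{Sol}(g)$, the tensor $B:=A_{1}\circ A^{-1}$ (with $A:=A(g,\bar g)$) lies in $\mbox{Sol}(\bar g)$. The algebraic requirements are quick: $B$ commutes with $J$ because $A_{1}$ and $A$ do, and $B$ is $\bar g$-self-adjoint because \eqref{eq:a} yields $\bar g(X,Y)=c\cdot g(A^{-1}X,Y)$ with $c:=(\det\bar g/\det g)^{1/(2(n+1))}$, which combined with $g$-self-adjointness of $A_{1}$ and $A$ gives the claim directly.

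For the differential condition \eqref{f} the cleanest route, I believe, is via Remark \ref{rem:PDE}. Assuming first that $A_{1}$ is non-degenerate, formula \eqref{inverse} assigns to it a K\"ahler metric $g_{1}=(\det A_{1})^{-1/2}\,g\circ A_{1}^{-1}$ which is $h$-projectively equivalent to $g$. The relation of $h$-projective equivalence is transitive straight from Definition \ref{defn:hpro}, so $g_{1}$ is also $h$-projectively equivalent to $\bar g$, and hence $A(\bar g,g_{1})\in\mbox{Sol}(\bar g)$. A direct computation using \eqref{eq:a} together with the identity $\det g_{1}=(\det A_{1})^{-(n+1)}\det g$ shows that all scalar factors cancel and $A(\bar g,g_{1})=A_{1}\circ A^{-1}$, so $B\in\mbox{Sol}(\bar g)$ in the non-degenerate case.

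To pass to arbitrary $A_{1}\in\mbox{Sol}(g)$, I use that \eqref{f} is a pointwise linear condition and that $A_{1}\mapsto A_{1}\circ A^{-1}$ is linear: at any fixed $p\in M$, the subset of $\mbox{Sol}(g)$ consisting of tensors non-degenerate at $p$ is open and contains $\Id$, hence spans the finite-dimensional space $\mbox{Sol}(g)$; the non-degenerate case above, applied in a neighborhood of $p$, verifies the PDE at $p$ for a spanning set, and linearity does the rest. The main obstacle I anticipate is not conceptual but computational: the determinant bookkeeping leading to the exact identification $A(\bar g,g_{1})=A_{1}\circ A^{-1}$ with no stray scalar demands care. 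As a built-in sanity check, taking $A_{1}=A$ in the formula should produce $\Id\in\mbox{Sol}(\bar g)$, and this is the simplest case to verify before committing to the general computation.
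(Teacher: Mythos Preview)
Your proposal is correct and follows essentially the same route as the paper: reduce to the non-degenerate case via Remark~\ref{rem:PDE} and transitivity of $h$-projective equivalence, verify $A(\bar g,g_{1})=A_{1}\circ A^{-1}$ directly from \eqref{eq:a}, and then pass to general $A_{1}$ by linearity. The only minor difference is that for degenerate $A_{1}$ the paper simply shifts to $A_{1}+t\,\Id$ for a single $t$ and subtracts off $tA^{-1}\in\mbox{Sol}(\bar g)$, which is a bit more direct than your spanning argument and avoids any appeal to finite-dimensionality of $\mbox{Sol}(g)$.
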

\begin{proof}
Let $A=A(g,\bar{g})$ be the solution of (\ref{f}) constucted by formula \eqref{eq:a}. If $A_{1}\in \mbox{Sol}(g)$ is non-degenerate, then $g_{1}=(\det\,A_{1})^{-\frac{1}{2}}g\circ A_{1}^{-1}$  is $h$-projectively equivalent to $g$  by Remark \eqref{rem:PDE} and,  hence, $g_{1}$ is $h$-projectively equivalent to $\bar{g}$. It follows that $A_{2}=A(\bar{g},g_{1})\in\mbox{Sol}(\bar{g})$. On the other hand, using formula \eqref{eq:a} we can easily verify that $A_{2}=A_{1}\circ A^{-1}$. If $A_{1}$ is degenerate, we can choose a real number $t$ such that $A_{1}+t \Id$ is non-degenerate. As we have already shown, $(A_{1}+t \Id)\circ A^{-1}=A_{1}\circ A^{-1}+t A^{-1}$ is contained in $\mbox{Sol}(\bar{g})$. Since $A^{-1}\in\mbox{Sol}(\bar{g})$,  the same is true for $A_{1}\circ A^{-1}$. We obtain that the mapping $A_{1}\longmapsto A_{1}\circ A(g,\bar{g})^{-1}$ is a linear isomorphism between the spaces $\mbox{Sol}(g)$ and $\mbox{Sol}(\bar{g})$. 
\end{proof}

\begin{lem}[Folklore]
\label{lem:htrafo}
Let $(M,g,J)$ be a Kähler manifold and let $v$ be an $h$-projective vector
field. 
Then  the $(1,1)$-tensor 
\begin{equation} \label{hprojective vector field}
A_{v}:=g^{-1}\circ\mathcal{L}_{v}g-\frac{\trace(g^{-1}\circ\mathcal{L}_{v}g)}{2(n+1)}\Id
\end{equation}
(where $\mathcal{L}_{v}$ is the Lie derivative with respect to $v$) is contained in $\mbox{Sol}(g)$.
\end{lem}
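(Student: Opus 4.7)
The plan is to use the local flow $\Phi^{v}_{t}$ of $v$ to generate a smooth one-parameter family of solutions of \eqref{f} passing through $\Id$, and then to recognise $A_v$ (up to sign) as its velocity at $t=0$. Since $v$ is $h$-projective and each $\Phi^{v}_{t}$ is holomorphic, the pullback $\bar g_t := (\Phi^{v}_{t})^{*}g$ is K\"ahler with respect to $J$ and $h$-projectively equivalent to $g$ for small $t$, with $\bar g_0 = g$. Applying formula \eqref{eq:a} therefore yields a smooth curve $A_t := A(g,\bar g_t)$ of complex, $g$-self-adjoint elements of $\mbox{Sol}(g)$, starting at $A_0 = \Id$.

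Next I would check that the derivative $\dot A_0 := \frac{d}{dt}\big|_{t=0}A_t$ itself lies in $\mbox{Sol}(g)$. By Remark \ref{rem:lambda2}, equation \eqref{f} is a linear PDE in the single unknown $A$ (the vector field $\Lambda$ is recovered algebraically as $\grad\tfrac14\trace A$). Differentiating the identity ``$A_t$ solves \eqref{f}'' in $t$ and commuting $\partial_t$ with $\nabla_X$ shows that $\dot A_0$ satisfies the same equation with $\Lambda$ replaced by $\dot\Lambda_0$; the pointwise algebraic conditions of being complex and $g$-self-adjoint are linear and therefore also pass to the derivative. Hence $\dot A_0 \in \mbox{Sol}(g)$.

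Finally I would compute $\dot A_0$ directly from \eqref{eq:a}. Writing $A_t = \rho_t^{1/(2(n+1))}\,\bar g_t^{-1}\circ g$ with $\rho_t := \det\bar g_t/\det g$, and using the standard identities $\frac{d}{dt}\big|_{t=0}\bar g_t = \mathcal{L}_{v}g$ (so that $\frac{d}{dt}\big|_{t=0}\bar g_t^{-1}\circ g = -g^{-1}\circ\mathcal{L}_{v}g$ because $\bar g_0 = g$) and Jacobi's formula $\frac{d}{dt}\big|_{t=0}\log\rho_t = \trace(g^{-1}\circ\mathcal{L}_{v}g)$, the product rule gives
\[
\dot A_0 \;=\; \frac{\trace(g^{-1}\circ\mathcal{L}_{v}g)}{2(n+1)}\,\Id \;-\; g^{-1}\circ\mathcal{L}_{v}g \;=\; -A_v.
\]
Consequently $A_v = -\dot A_0 \in \mbox{Sol}(g)$, as required. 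There is no real obstacle in the argument: the whole content of the lemma is that the flow of an $h$-projective vector field deforms $g$ inside the family of $h$-projectively equivalent K\"ahler structures, and \eqref{eq:a} intertwines this deformation with the linear space $\mbox{Sol}(g)$, so its infinitesimal generator is automatically a solution; the short computation above just identifies this generator with $-A_v$.
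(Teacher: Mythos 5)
Your proof is correct and follows essentially the same route as the paper: pull back $g$ by the local flow, observe that $A_t=A(g,\bar g_t)$ is a smooth curve in the linear space $\mbox{Sol}(g)$ through $\Id$, and differentiate \eqref{eq:a} at $t=0$. The only (immaterial) discrepancy is the sign --- you obtain $\dot A_0=-A_v$ whereas the paper's displayed computation arrives at $+A_v$; since $\mbox{Sol}(g)$ is a linear space this does not affect the conclusion, and your sign is in fact the correct one.
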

\begin{proof}
Since $v$ is $h$-projective,  $\bar{g}_{t}=(\Phi^{v}_{t})^{*}g$ is $h$-projectively equivalent to $g$ for every $t$. It follows that for every $t$ the tensor $A_{t}=A(g,\bar{g}_{t})$ is a solution of equation (\ref{f}). Since $(\ref{f})$ is linear, $A_v:= \left(\frac{d}{dt}A_{t}\right)_{|_{t=0}}$ is also a solution of \eqref{f} and it is clearly self-adjoint. Since the flow of $v$ preserves the complex structure, $A_v$ is complex. 
  Using equation \eqref{eq:a}, we obtain  that $A_{v}$ is equal to  
\begin{align}
&\frac{d}{dt}\left[\left(\frac{\det\,\bar{g}_{t}}{\det\,g}\right)^{\frac{1}{2(n+1)}}\bar{g}_{t}^{-1}\circ g\right]{\Big|_{t=0}}=\frac{1}{2(n+1)}\left(\frac{d}{dt}\frac{\det\,\bar{g}_{t}}{\det\,g}\Big|_{t=0}\right)\Id
+\left(\frac{d}{dt}\bar{g}_{t}^{-1}\circ g\right)\Big|_{t=0}\nonumber\vspace{4mm}\\
&=\frac{1}{2(n+1)}\left(\frac{d}{dt}\frac{\det\,\bar{g}_{t}}{\det\,g}\right)\Big|_{t=0}\Id-\left(\bar{g}_{t}^{-1}\circ\left(\frac{d}{dt}\bar{g}_{t}\right)\circ{\bar g}_{t}^{-1}\circ g\right)\Big|_{t=0}\nonumber\vspace{4mm}\\
&=\frac{1}{2(n+1)}\left(\frac{d}{dt}\frac{\det\,\bar{g}_{t}}{\det\,g}\right)\Big|_{t=0}\Id+g^{-1}\circ\mathcal{L}_{v}g=-\frac{\trace(g^{-1}\circ\mathcal{L}_{v}g)}{2(n+1)}+g^{-1}\circ\mathcal{L}_{v}g.
\nonumber
\end{align}
Thus,  $A_{v}\in Sol(g)$ as we claimed.
\end{proof}

\subsection{Scheme of the proof of Theorem \ref{thm:obata}.}
\label{sec:scheme}

 If the  degree of mobility  $D(g)\geq 3$, Theorem \ref{thm:obata} is an immediate consequence of \cite[Theorem 1]{FKMR}. Indeed, by \cite[Theorem 1]{FKMR}, if $D(g)\geq 3$ and the manifold cannot be covered by $(\mathbb{C}P(n),c\cdot g_{FS},J)$ for some $c>0$, every metric $\bar g$  that is  $h$-projectively equivalent to $g$ is actually affinely  equivalent to $g$. By  \cite{lichnerowicz}, the connected component of the neutral element of the group of affine transformations on a closed manifold consists of isometries. Finally, we obtain $\mbox{HProj}_0= \mbox{Iso}_0$. 
 
  If the  degree of mobility is  equal to one, every metric $\bar{g}$ that is  $h$-projectively equivalent to $g$ is proportional to $g$. Then, the group $\mbox{HProj}_0(g, J)$ acts by homotheties. Since the manifold is closed, it acts by isometries. Again, we obtain $\mbox{HProj}_0= \mbox{Iso}_0$. 
 
 Thus,  in the proof of Theorem \ref{thm:obata}, we may (and will)   assume that the degree of mobility of the metric $g$ is equal to two.  
 
 The proof will be organized as follows. 
In  Sections \ref{sec:quadintglobord} and \ref{sec:basics}, we collect and proof basic facts  that will be used in the  proof of   Theorem \ref{thm:obata}.   Certain results of Sections \ref{sec:quadintglobord},  \ref{sec:basics}  were known before; we will give precise references. 
Proofs in  Sections \ref{sec:quadintglobord} and \ref{sec:basics}  are based on different groups of methods and different ideas.  In Section \ref{sec:quadintglobord},  we use the familiy of 
  integrals  for the geodesic flow of the metric $g$ found  by Topalov \cite{Top2003}. With the help of  these integrals, we prove that the eigenvalues of $A$ behave quite regular, in particular we show that they are globally ordered and that the multiplicity of every nonconstant eigenvalue is equal to two. The assumptions of this section are  global (we assume that every two points can be connected  by a geodesic).

  In Section \ref{sec:basics}, we work locally  with equation
\eqref{f}. We  show that $\Lambda$ and $\bar \Lambda$  from this equation are commuting holomorphic vector fields that are nonzero at almost every point. We also deduce from \eqref{f} certain  equations on the eigenvectors and eigenvalues of $A$:  in particular we show that  the gradient 
 of every  eigenvalue is an eigenvector corresponding to this eigenvalue.

Beginning with Section \ref{sec:D=2}, we require the assumption that the degree of mobility is equal to two. Moreover, we assume the existence of an $h$-projective vector field which is not already an affine vector field. The main goal of  Section \ref{sec:D=2} is to show that for every 
 solution $A$ of equation (\ref{f}) with the corresponding vector field $\Lambda$, in a neighborhood of almost every point 
  there exists  a function $\mu$ and a constant $B<0$ (that can depend on the neighborhood)
   such that for all points $x$ in this neighborhood and all $X,Y\in T_xM $ we have 
\begin{align}
\begin{array}{c}
(\nabla_{X}A)Y=g(Y,X)\Lambda + g(Y,\Lambda)X + g(Y,JX)\bar{\Lambda}+g(Y,\bar{\Lambda})JX\vspace{2mm}\\
\nabla_{X}\Lambda=\mu X+B A(X)\vspace{2mm}\\
\nabla_{X}\mu=2Bg(X,\Lambda)
\end{array}\label{eq:systemscheme}
\end{align}
(one should view \eqref{eq:systemscheme} as a PDE-system on $(A, \Lambda, \mu)$).

This is the longest and the most complicated part of the proof. First, in Section \ref{sec:onenonconstant}, we combine Lemma  \ref{lem:htrafo}  with the assumption that the  degree of mobility is two to obtain the formulas (\ref{eq:liederivA},\ref{eq:lie}) that describe the evolution of $A$ along the flow of the $h$-projective vector field. With the help of the results of Section \ref{sec:basics}, we deduce  (in the proof of Lemma \ref{lem:spectrumD=2}) an  ODE for the eigenvalues of $A$ along the trajectories of the $h$-projective vector field. This ODE can be solved; combining the solutions \weg{with the assumption that the manifold is closed and} with the global ordering of the eigenvalues from Section \ref{sec:quadintglobord}, we obtain that $A$ has at most three eigenvalues at every point; moreover, precisely one  eigenvalue of $A$ considered as a function on the manifold is not      constant  (unless the $h$-projective vector field is an affine vector field).  As a consequence,  in view of the  results of Section \ref{sec:basics}, the vectors  $\Lambda$ and $\bar{\Lambda}$ are eigenvectors of $A$.

The equation \eqref{eq:lie} depends on two parameters. We prove that under the assumption that the manifold is closed, the parameters are subject of a certain algebraic equation so that in fact the  equation \eqref{eq:lie} depends on one parameter only. In order to do it, we work with the distribution $\spann\{\Lambda, \bar\Lambda\}$ and show that its integral manifolds are totally geodesic. The equations  (\ref{hprojective vector field},\ref{eq:lie}) contain enough information to calculate the  restriction of the metric to this distribution; the metric depends on the same  parameters  as  the equation \eqref{eq:lie}. We calculate the sectional curvature of this metric and see that it is unbounded (which can not happen on a closed manifold), unless the parameters satisfy a certain  algebraic equation.

 In  Section \ref{sec:systemloc},  we show that  the algebraic  
  conditions mentioned above imply the local existence of $B$ and $\mu$ 
 such that  \eqref{eq:systemscheme} is fulfilled. This proves that the system \eqref{eq:systemscheme} is satisfied in a neighborhood of almost every point of $M$, for certain $B, \mu$ that can a priori depend on the neighborhood.

We complete  the proof of Theorem \ref{thm:obata}  in  Section \ref{sec:obata}. First we recall  certain  results of \cite{FKMR} to show that the constant $B$  is the same in all neighborhoods implying that the system  \eqref{eq:systemscheme} is fulfilled on the whole manifold.

Once we have shown that   the system \eqref{eq:systemscheme} holds globally,  Theorem \ref{thm:obata} is an immediate consequence of   \cite[Theorem 10.1]{Tanno1978}.

\subsection{Relation with projective equivalence.}
\label{relation}

Two metrics $g$ and $\bar g$ on the same manifold are
\emph{projectively equivalent}, if every geodesic of $g$, after an
appropriate reparametrization, is a geodesic of $\bar g$. As we
already mentioned above,  the notion
``$h$-projective equivalence'' appeared as an attempt to adapt
the notion ``projective equivalence'' to Kähler metrics. It is
therefore not a surprise that certain methods from the theory of
projectively equivalent metrics could be adapted to the $h$-projective
situation. For example, the above mentioned papers
\cite{HiramatuK,Yano1981,Akbar} are actually  $h$-projective analogs
of the papers~\cite{Yamauchi1,Hiramatu} (dealing with projective
transformations), see also~\cite{Hasegawa,Takeda}.  Moreover,
\cite{Yoshimatsu,Tashiro1956} are $h$-projective analogs  of
\cite{Ishihara1961,Tanaka}, and many results listed in 
the survey \cite{Mikes} are
$h$-projective analogs  of those listed in \cite{Mikes1996}.

 The Yano-Obata conjecture is also an $h$-projective analog of the
so-called projective Lichnerowicz-Obata conjecture mentioned above and recently proved in
\cite{Matveev2007,CMH}, see also~\cite{M2004,M2004bis}. 
The general scheme of our 
 proof of the Yano-Obata conjecture 
 is similar to the scheme of  the proof of the projective Lichnerowicz-Obata conjecture in 
\cite{Matveev2007}. More precisely,  as in the projective case, the cases degree of mobility equal to two and degree of mobility $\ge 3$ were done using completely different groups of methods.  As we mentioned above,   
the proof of the Yano-Obata conjecture 
for the metrics with degree of mobility $\ge 3$  was  done in \cite{FKMR}. This proof is based on other ideas than the corresponding part of the proofs of the projective Lichnerowicz-Obata conjecture in \cite{Matveev2007,M2006}.

 Concerning the proof under the assumption that  the degree of 
mobility is two, the first part of the proof  (Sections \ref{sec:quadintglobord}, \ref{sec:onenonconstant}) 
 is based on  the same ideas as in the projective case.   
More precisely, the way to use integrals for the geodesic flow to show the regular behavior of the  eigenvalues of $A$ and their global ordering  is very close to that of  \cite{BM2003,Inventiones2003,zametki,dedicata}. The way to obtain equation  \eqref{eq:lie} that describes the evolution of $A$ along the orbits of the $h$-projective vector field is close to that in \cite{splitting} and is  motivated by \cite{M2004,M2004bis,CMH,Matveev2007}.

\weg{ Concerning further perspective  directions of investigation, one can try to obtain topological  obstructions that prevent a 
closed manifold to posses  two  $h$-projectively equivalent
metrics that are not affinely equivalent.  In the projective case,  the proof of 
certain topological restrictions is based on the integrals for the geodesic flow, see  \cite{M2002,M2003,Inventiones2003,M2005}; we expect that these methods can be generalized for the $h$-projective case. }

\section{Quadratic integrals and the global ordering of the eigenvalues of solutions of equation (\ref{f})}
\label{sec:quadintglobord}
\subsection{Quadratic integrals for the geodesic flow of $g$.}
\label{sec:int}

Let $A$ be  a self-adjoint, complex solution of equation (\ref{f}). By \cite{Top2003} (see also 
 the end of Appendix A of \cite{ApostolovI}), for every $t\in \mathbb{R}$,  the function  
\begin{align}
F_{t}:TM\to \mathbb{R}\, , \ \  F_{t}(\zeta):=\sqrt{\det\,(A-t\Id)}\,g((A-t\Id)^{-1}\zeta,\zeta)\label{eq:int2}
\end{align}
is an  integral for the geodesic flow of $g$.  

\begin{rem} \label{eas} 
It is  easy to prove  (see formula \eqref{eq:int2expl} below)  that 
 the integrals are defined for all $t\in\mathbb{R}$ (i.e., even  if $A-t\Id$ is degenerate).
 Actually, the family $F_t$ is a polynomial of degree $n-1$ in $t$ whose coefficients are certain functions on $TM$; these functions are automatically integrals. 
\end{rem} 

\begin{rem} 
The integrals are visually close to the integrals for the geodesic flows of projectively equivalent metrics constructed in \cite{MT}.
\end{rem} 

Later it will be useful to consider derivatives of the integrals defined above:
\begin{lem}
\label{lem:dint}
Let $\{F_{t}\}$ be the family of integrals given in equation (\ref{eq:int2}). Then, for each integer $m\geq 0$ and for each number $t_0\in\mathbb{R}$,
\begin{equation} \label{der} 
\left(\tfrac{d^{m}}{dt^{m}}F_{t}\right)_{|t=t_0}
\end{equation}
is also an integral for the geodesic flow of $g$.
\end{lem}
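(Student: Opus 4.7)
The lemma follows almost immediately from the observation in Remark \ref{eas}: the family $t\mapsto F_t$ is a polynomial of degree $n-1$ in $t$, with coefficients that are functions on $TM$ independent of $t$. The plan is therefore in two short steps: first identify each coefficient of this polynomial as an integral of the geodesic flow, and then express $\left(d^m/dt^m F_t\right)_{|t=t_0}$ as an $\R$-linear combination of those coefficients with scalars depending only on $t_0$.

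To carry out the first step, I would write
\[
F_t(\zeta) \;=\; \sum_{k=0}^{n-1} t^k\, G_k(\zeta), \qquad G_k\in C^\infty(TM),
\]
and invoke the hypothesis that $F_t$ is a first integral of the geodesic Hamiltonian $H(\zeta)=\tfrac{1}{2}g(\zeta,\zeta)$ for every $t\in\R$. In terms of the canonical Poisson bracket $\{\cdot,\cdot\}$ on $TM$, this hypothesis reads
\[
0 \;=\; \{F_t, H\} \;=\; \sum_{k=0}^{n-1} t^k\, \{G_k, H\} \qquad \text{for all } t\in\R.
\]
Since this is a polynomial identity in $t$, every coefficient must vanish: $\{G_k,H\}=0$ for $k=0,\ldots,n-1$. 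Equivalently, each coefficient function $G_k$ is itself constant along geodesics of $g$, hence an integral of the geodesic flow.

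For the second step, termwise differentiation of the polynomial $F_t$ yields
\[
\left(\tfrac{d^m}{dt^m}F_t\right)_{|t=t_0} \;=\; \sum_{k=m}^{n-1} \frac{k!}{(k-m)!}\, t_0^{k-m}\, G_k,
\]
which is an $\R$-linear combination of the integrals $G_0,\ldots,G_{n-1}$. Since the set of integrals of the geodesic flow is a linear subspace of $C^\infty(TM)$, this combination is again an integral, which is the claim. For $m\ge n$ the derivative vanishes identically and the statement is trivial. I do not foresee any substantive obstacle: the only nontrivial ingredient is the polynomiality of $F_t$ in $t$, which is already granted by Remark \ref{eas} and comes from the cancellation of the apparent singularities of $(A-t\Id)^{-1}$ by the factor $\sqrt{\det(A-t\Id)}$ (using that $A$ commutes with $J$, so that $\det(A-t\Id)$ is a perfect square of a polynomial of degree $n$ in $t$).
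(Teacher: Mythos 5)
Your proposal is correct and follows essentially the same route as the paper: both arguments rest on the polynomiality of $F_t$ in $t$ (Remark \ref{eas}), identify the coefficients as integrals, and observe that the $t$-derivative at $t_0$ is a linear combination of these coefficients. Your version merely spells out, via the Poisson-bracket identity $0=\{F_t,H\}=\sum_k t^k\{G_k,H\}$, why the coefficients are integrals — a point the paper already asserts in Remark \ref{eas} — so there is no substantive difference.
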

\begin{proof}
 As we already mentioned above in Remark  \ref{eas}, 
\begin{align}
F_{t}(\zeta)=s_{n-1}(\zeta) t^{n-1}+...+s_{1}(\zeta) t+s_{0}(\zeta)\nonumber
\end{align}
for certain integrals  $s_{0},...,s_{n-1}:TM\to \mathbb{R}$.  Then, the $t$-derivatives \eqref{der} are 
 also  polynomials in $t$ whose coefficients are integrals, i.e., the $t$-derivatives \eqref{der} are also integrals for every fixed $t_0$. 
\end{proof} 

\subsection{Global ordering of the eigenvalues of solutions of equation (\ref{f})}
\label{sec:globord}
During the whole subsection let $A$ be an element of $\mbox{Sol}(g)$; that is, $A$ is a complex, self-adjoint $(1,1)$-tensor such that it is a solution of equation (\ref{f}). Since it is self-adjoint with respect to (a positively-definite metric) $g$, 
the eigenvalues of $A_{|x}:=A_{|T_{x}M}$ are  real.

\begin{defn}
\label{defn:stable}
We denote by $m(y)$ the number of different eigenvalues of $A$ at the point $y$. Since $A\circ J=J\circ A$, each eigenvalue has even  multiplicity $\geq 2$. Hence, $m(y) \leq n$ for all $y\in M$.
We say that $x\in M$ is a \textit{typical point } for $A$ if $m(x)=\max_{y\in M}\{m(y)\}$. 
The set of all typical points of $A$ will be denoted by $ M^{0}\subseteq M$. 
\end{defn}

 Let us denote by 
 $\mu_{1}(x)\leq...\leq\mu_{n}(x)$ the eigenvalues of $A$ counted with  half of their multiplicities. 
 The functions $\mu_{1},...,\mu_{n}$ are real since $A$ is self-adjoint and they are at least continuous.  It follows that $M^{0}\subseteq M$ is an open subset. 
 The next  theorem  shows that  $M^{0}$ is dense in $M$.
\begin{thm}
\label{thm:Mat}
Let $(M,g,J)$ be a  Riemannian  K\"ahler manifold of real dimension $2n$.  Suppose every two points of $M$ can be connected by a geodesic.  Then, for every  $A\in \mbox{Sol}(g)$ and every   $i=1,...,n-1$,  the following statements hold:
\begin{enumerate}
\item $\mu_{i}(x)\leq\mu_{i+1}(y)$ for all $x,y\in M$.\vspace{2mm}
\item If $\mu_{i}(x)<\mu_{i+1}(x)$ at least at one point, then  the set of all points $y$ such that $\mu_{i}(y)<\mu_{i+1}(y)$ is everywhere dense  in $ M$.
\end{enumerate}
\end{thm}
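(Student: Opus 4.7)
The principal tool is Topalov's family of integrals $F_t$ from Subsection \ref{sec:int}, which is conserved along the geodesic flow of $g$. The argument rests on an explicit factorization of $F_t$. Fix $z\in M$ and let $\alpha_1(z)<\cdots<\alpha_s(z)$ be the distinct eigenvalues of $A_{|z}$, with multiplicities (in the list $\mu_k$) $q_1,\ldots,q_s$, so $\sum_\ell q_\ell=n$. In an orthonormal eigenbasis of $A$ at $z$, a direct computation gives
\[
F_t(\zeta)=\prod_\ell(\alpha_\ell(z)-t)^{q_\ell(z)-1}\cdot \tilde F_t(\zeta),\qquad
\tilde F_t(\zeta)=\sum_\ell C_\ell(\zeta)\prod_{\ell'\neq\ell}(\alpha_{\ell'}(z)-t),
\]
where $C_\ell(\zeta)\geq 0$ is the squared $g$-length of the projection of $\zeta$ onto the $\alpha_\ell$-eigenspace. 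For $\zeta$ \emph{generic at $z$} (i.e., $C_\ell(\zeta)>0$ for every $\ell$), monotonicity of $\sum_\ell C_\ell(\zeta)/(\alpha_\ell(z)-t)$ between consecutive poles shows that $\tilde F_t(\zeta)$ has exactly $s-1$ simple real roots, one in each open interval $(\alpha_\ell(z),\alpha_{\ell+1}(z))$. Arranging the resulting $n-1$ roots of $F_t(\zeta)$ in increasing order $\lambda_1(\zeta,z)\leq\cdots\leq\lambda_{n-1}(\zeta,z)$, the interlacing
\[
\mu_k(z)\leq \lambda_k(\zeta,z)\leq \mu_{k+1}(z),\qquad k=1,\ldots,n-1,
\]
holds, with strict inequalities precisely when $\mu_k(z)<\mu_{k+1}(z)$.

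For part (1), connect $x$ to $y$ by a geodesic $\gamma$ with $\dot\gamma(0)=\zeta$ and $\dot\gamma(1)=\eta$. Conservation gives the polynomial identity $F_t(\zeta)=F_t(\eta)$, hence the sorted roots agree: $\lambda_k(\zeta,x)=\lambda_k(\eta,y)$ for every $k$. Choosing $\zeta$ in the open dense set where $\zeta$ is generic at $x$ and (by smoothness of the geodesic flow) $\eta$ is generic at $y$, the interlacing applied at both endpoints yields $\mu_k(x)\leq\lambda_k\leq\mu_{k+1}(y)$. Extending by continuity of the $\mu_k$ and by a small perturbation of the initial velocity gives the inequality $\mu_k(x)\leq\mu_{k+1}(y)$ for all $x,y\in M$.

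For part (2), suppose for contradiction that $\{\mu_i<\mu_{i+1}\}$ is not dense in $M$. Then its complement contains a non-empty open set $V$ on which $\mu_i=\mu_{i+1}=:\alpha$; applying (1) to pairs of points in $V$ gives $\alpha(w)\leq\alpha(w')$ and conversely, so $\alpha\equiv\alpha_0$ is constant on $V$. Fix $x_0$ with $\mu_i(x_0)<\mu_{i+1}(x_0)$ and set $G_{x_0}:=\exp_{x_0}^{-1}(V)\subset T_{x_0}M$, which is open and non-empty by the geodesic connectivity hypothesis. For $\zeta\in G_{x_0}$ landing at $y_\zeta:=\exp_{x_0}(\zeta)\in V$ with terminal velocity $\eta_\zeta:=\dot\gamma_\zeta(1)$, per-root conservation gives $\lambda_i(\zeta,x_0)=\lambda_i(\eta_\zeta,y_\zeta)$; at $y_\zeta\in V$ the interlacing forces $\lambda_i(\eta_\zeta,y_\zeta)\in[\mu_i(y_\zeta),\mu_{i+1}(y_\zeta)]=\{\alpha_0\}$, so $\lambda_i(\zeta,x_0)=\alpha_0$ for all $\zeta\in G_{x_0}$. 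In particular $F_{\alpha_0}(\zeta)=0$ on the open set $G_{x_0}$; since $F_{\alpha_0}$ is a quadratic form on $T_{x_0}M$, it vanishes identically there. The factorization together with the linear independence of the forms $C_\ell$ (which excludes $\tilde F_{\alpha_0}\equiv 0$) then forces the prefactor to vanish at $t=\alpha_0$: so $\alpha_0=\mu_j(x_0)=\mu_{j+1}(x_0)$ for some $j$, and item (1) restricts $j$ to $\{i-1,i,i+1\}$. The case $j=i$ directly contradicts $\mu_i(x_0)<\mu_{i+1}(x_0)$. In the case $j=i-1$ (respectively $j=i+1$), for every generic $\zeta$ at $x_0$ the sorted-root structure forces $\lambda_i(\zeta,x_0)>\alpha_0$ (respectively $\lambda_i(\zeta,x_0)<\alpha_0$); since $G_{x_0}$ contains generic $\zeta$, this contradicts $\lambda_i(\zeta,x_0)=\alpha_0$.

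The main point requiring care is the final case analysis in (2): the ``$i$-th sorted root'' at $x_0$ and at $y_\zeta\in V$ arises from different pieces of the factorization (a simple interlacing root at $x_0$ vs.\ a repeated-eigenvalue root at $y_\zeta$), so one must keep careful track of the positional structure to extract the strict inequality that contradicts $\lambda_i(\zeta,x_0)=\alpha_0$ in the cases $j=i\pm 1$.
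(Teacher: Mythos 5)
Your proposal is correct and follows the same core strategy as the paper: the Topalov integrals $F_t$, the interlacing of the sorted roots of $F_t(\zeta)$ with the eigenvalues $\mu_k$, conservation of the roots along geodesics for part (1), and for part (2) the observation that a quadratic form vanishing on an open (positive-measure) set of initial velocities must vanish identically, forcing $\alpha_0$ to be a multiple eigenvalue at $x_0$. The only place you genuinely diverge is the endgame of (2): the paper forms the auxiliary integral $F_t/(t-\mu)^{k-1}$ and derives the contradiction from the non-triviality of the resulting quadric, whereas you extract it directly from the strict interlacing $\mu_i(x_0)<\lambda_i(\zeta,x_0)<\mu_{i+1}(x_0)$ for generic $\zeta$, which cleanly rules out $\lambda_i(\zeta,x_0)=\alpha_0$ once $\alpha_0$ is pinned to an endpoint; this is a slightly more economical finish. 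One small point to make explicit: at $y_\zeta$ the terminal velocity $\eta_\zeta$ need not be generic, so either invoke continuity of the sorted roots to get the non-strict interlacing for all $\zeta$ (as the paper does), or shrink $G_{x_0}$ to the open dense subset where $\eta_\zeta$ is generic at $y_\zeta$ --- either patch is immediate.
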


\begin{rem}
If the  Kähler manifold is compact, the global description of  hamiltonian $2$-forms \cite[Theorem 5]{ApostolovII} implies the global ordering of the eigenvalues (the first part of Theorem \ref{thm:Mat}), and this is sufficient for our further goals. However, we give an alternative proof which works under less general assumptions, and is based on other ideas. 
\end{rem}

\begin{proof}
$(1)$: Let $x\in M$ be an arbitrary point. At $ T_xM$, 
 we choose 
an orthonormal frame $\{U_{i},JU_{i}\}_{i=1,...,n} $ of eigenvectors   (we assume  
 $AU_{i}=\mu_{i}U_{i}$  and $g(U_i, U_i)=1$ for all $i=1,...,n$). For $\zeta\in T_{x}M$,  we denote its components in the  frame $\{U_{i},JU_{i}\}_{i=1,...,n}$ by
   $\zeta_{j}:=g(\zeta,U_{j})$ and $\bar{\zeta}_{j}:=g(\zeta,JU_{j})$. 
   By direct calculations, we see that  
    $F_{t}(\zeta)$ given by  \eqref{eq:int2} reads 
\begin{equation} \begin{array}{rl} 
F_{t}(\zeta)=& \sum\limits_{i=1}^n \Big[(\zeta_{i}^{2}+\bar{\zeta}_{i}^{2})\prod\limits_{j=1; j\ne i}^n (\mu_{j}-\mu_{i})\Big]
\\=&(\mu_{2}-t)\cdot...\cdot(\mu_{n}-t) (\zeta_{1}^{2}+\bar{\zeta}_{1}^{2})+...+(\mu_{1}-t)\cdot...\cdot(\mu_{n-1}-t) (\zeta_{n}^{2}+\bar{\zeta}_{n}^{2}).\end{array}\label{eq:int2expl}
\end{equation} 
Obviously, $F_{t}(\zeta)$ is a polynomial in $t$ of degree $n-1$ whose leading coefficient is $(-1)^{n-1}g(\zeta, \zeta)$. 

For every point $x\in M$ and every $\zeta\in T_xM$ such that $\zeta\ne 0$, let us consider the roots $$t_1(x, \zeta), ... , t_{n-1}(x, \zeta):T_xM\to \mathbb{R} $$ 
of the polynomial counted with their multiplicities. From the arguments below it will be clear that they are real. We assume 
that at every $(x,\zeta)$  we have $t_1(x, \zeta)\le  ... \le  t_{n-1}(x, \zeta)$. Since for every fixed  $t$ 
 the polynomial $F_t$ is an integral, the roots $t_i$ are also integrals. 

Let us show that for every $i=1,...,n-1$ the inequality

\begin{equation}\label{eq:eigval}
 \mu_i(x) \le t_i(x, \zeta)\le  \mu_{i+1}(x)
\end{equation} 
holds.

We consider first the case when 
  all eigenvalues are different from each other,  i.e., $\mu_{1}(x)<...<\mu_{n}(x)$, and all components $\zeta_i$ are different from zero.   Substituting   $t=\mu_{i}$ and $t=\mu_{i+1}$ in equation \eqref{eq:int2expl}, we obtain   
\begin{align}
F_{\mu_{i}}(\zeta)=(\mu_{1}-\mu_{i})\cdot...\cdot(\mu_{i-1}-\mu_{i})\cdot (\mu_{i+1}-\mu_{i})\cdot...\cdot(\mu_{n}-\mu_{i})(\zeta_{i}^{2}+\bar{\zeta}_{i}^{2}),\nonumber
\end{align}
\begin{align}
F_{\mu_{i+1}}(\zeta)=(\mu_{1}-\mu_{i+1})\cdot...\cdot(\mu_{i}-\mu_{i+1})\cdot (\mu_{i+2}-\mu_{i+1})\cdot...\cdot(\mu_{n}-\mu_{i+1})(\zeta_{i+1}^{2}+\bar{\zeta}_{i+1}^{2}).\nonumber
\end{align}
We see that $F_{\mu_i}(\zeta)$ and $F_{\mu_{i+1}}(\zeta)$
 have different signs, see figure \ref{1}.  Then, every  open interval  $(\mu_{i},\mu_{i+1})$ contains a root of the polynomial $F_{t}(\zeta)$. Thus, all $n-1$ roots of the polynomial are real, and the inequality \eqref{eq:eigval} holds as we claimed. 
 
 \begin{figure}
  \includegraphics[width=.9\textwidth]{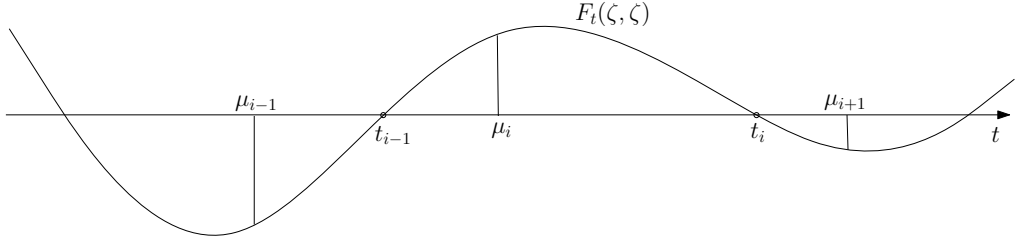}
  \caption{If $\mu_1<\mu_2<...<\mu_n$ and all  $\zeta_i\ne 0$, the values of $F_t(\zeta)$ have different signs at $t=\mu_i$ and $t=\mu_{i+1}$ implying the existence of a root $t_i$ such that $\mu_i< t_i < \mu_{i+1}$. }\label{1}
\end{figure}

  In the general case, since  $F_{t}(\zeta)$ depends continuously on the vector $\zeta$ and on the eigenvalues $\mu_{1}(x)\leq...\leq\mu_{n}(x)$ of $A_{|x}$, its zeros also depend continuously on $\zeta$ and $\mu_i$. It follows that for every $x$  and for all $\zeta\in T_{x}M$   we have   that all zeros are real and that 
\eqref{eq:eigval} holds.

Let us now show that for any two points $x,y$ we have 
$\mu_i(x)\le \mu_{i+1}(y)$. 

 We consider a geodesic  $\gamma:[0,1]\rightarrow M$
  such that $\gamma(0)=x$ and $\gamma(1)=y$. 
Since $F_{t}$ are integrals, we have $F_{t}(\dot\gamma(0))=F_{t}(\dot\gamma(1))$ implying 
\begin{equation} \label{tt}
 t_i(\gamma(0), \dot\gamma(0)) = t_{i}(\gamma(1), \dot\gamma(1)).\end{equation}

 Combining \eqref{eq:eigval} and  \eqref{tt}, we obtain 
  $$\mu_{i}(x)\stackrel{\eqref{eq:eigval}}{\leq} t_{i}(x,\dot\gamma(0))\stackrel{\eqref{tt}}{=}t_{i}(y,\dot\gamma(1))\stackrel{\eqref{eq:eigval}}{\leq} \mu_{i+1}(y)$$
   which proves the first part of Theorem \ref{thm:Mat}.
   
$(2)$: Assume $\mu_{i}(y)=\mu_{i+1}(y)$ for all points $y$ in some nonempty open subset $U\subseteq M$. We need to prove that for every $x\in M$ we have  $\mu_{i}(x)=\mu_{i+1}(x)$.

First let us show that $\mu:=\mu_{i}=\mu_{i+1}$ is a constant on $U$. Indeed, suppose that $\mu_{i}(y_{1})\leq\mu_{i}(y_{2})$ for some points $y_{1},y_{2}\in U$. From the first part of Theorem \ref{thm:Mat}  and from the assumption $\mu_i=\mu_{i+1}$ we  obtain 
$$\mu_{i}(y_{1})\leq\mu_i(y_2)\leq\mu_{i+1}(y_1) = \mu_i(y_1)$$ 
implying   $\mu_{i}(y_{1})=\mu_{i}(y_{2})$ for all $y_{1},y_{2}\in U$ as we claimed.

Now take an arbitrary point $x\in M$ and   consider  the set of  all initial velocities of geodesics  connecting $x$ with points of $U$ (we assume $\gamma(0)=x$ and $\gamma(1)\in U$), see figure \ref{2}.
\begin{figure}
  \centering
  \includegraphics[width=.6\textwidth]{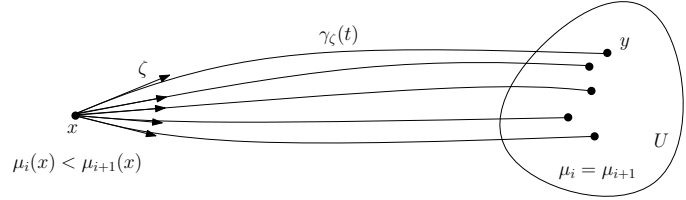}
  \caption{ 
   The initial velocity  vectors $\zeta$ at $x$ of the  geodesics connecting 
    the point $x$ with points from $U$ form a subset of nonzero measure and are contained in $U_{\mu}$.}
  \label{2}
\end{figure}
   For every such geodesic $\gamma$  we have  
$$\mu =\mu_i(\gamma(1)) \le t_i(\gamma(1), \dot\gamma(1)) \le \mu_{i+1}(\gamma(1))=\mu.$$ 
Thus, $t_i(\gamma(1), \dot\gamma(1))=\mu$.  
Since the value  $t_i(\gamma(t), \dot\gamma(t))$ is the same for all points of the geodesic, we obtain that   $t_i(\gamma(0), \dot\gamma(0))=\mu$. Then, 
the set 
\begin{align}
U_{\mu}:=\{\zeta\in T_{x}M:\  t_{i}(x,\zeta)=\mu\}\nonumber
\end{align} 
has nonzero measure.  Since $U_{\mu}$ is
contained  in the set 
$$
\{\zeta\in T_{x}M:\ F_\mu(\zeta) =0\}$$
which is a quadric in $T_{x}M$, the latter must coincide with the whole $T_{x}M$. In view of formula  \eqref{eq:int2expl}, this implies that at least two eigenvalues of $A$ at $x$ should be equal to $\mu$.
Suppose the multiplicity of the eigenvalue $\mu$ is equal to $2k$. This implies that $\mu_{r+1}(x)=...=\mu_{r+k}(x)=\mu$, $\mu_{r}(x)\ne \mu$ and $\mu_{r+ k +1}(x)\ne \mu$. If $i\in \{r+1,...,r+k-1\}$, we are done.  We assume that  $i \not \in \{r+1,...,r+k-1\}$ and find a contradiction.

In order to do it,  we consider the function  
$$\tilde F:\mathbb{R}\times TM\to \mathbb{R}\,, \ \ \tilde F_t(\zeta):= F_t(\zeta)/(t-\mu)^{k-1}.$$
 At the point $x$, each term of the sum \eqref{eq:int2expl} contains $(t-\mu)^{k-1}$ implying that $\tilde F_t(\zeta)$ is a polynomial  in $t$ (and is a quadratic function in $\zeta$). Since for every fixed $t_0$ the function $F_{t_0}$ is an integral, the function $\tilde F_{t_0}$ is also an integral.  
  Let us show  that for every geodesic $\gamma$ with $\gamma(0)=x$ and $\gamma(1)\in U$ we have 
   that $\left(\tilde F_t(\dot\gamma(0))\right)_{|t=\mu}=0$. Indeed, we already have shown that 
   $t_i(x, \dot\gamma(0))=\mu$. By similar arguments, in view of inequality \eqref{eq:eigval}, we  obtain 
 $t_{r+1}(x, \dot\gamma(0))=...=t_{r+k-1}(x, \dot\gamma(0))=\mu$. Then, $t=\mu$ is a root of multiplicity $k$ of $F_t(\dot\gamma(0))$ and, therefore, a root of multiplicity $k-(k-1)= 1$ of $\tilde F_t(\dot\gamma(0))= F_t(\dot\gamma(0))/(t-\mu)^{k-1}$.
 Finally,  $\tilde F_\mu(\dot\gamma(0))=0$.

   Now, in view of the formula  \eqref{eq:int2expl}, the set $\{ \zeta\in T_xM : \   \tilde F_\mu(\zeta)=0\}$ is a nontrivial (since $\mu_{r}\ne \mu \ne \mu_{r+k+1}$)   quadric in $T_xM$, 
   which contradicts the assumption that it contains a subset  $U_\mu$ of nonzero measure. Finally, we have $i,i+1\in \{r+1,...,r+k\}$ implying $\mu_i(x)= \mu_{i+1}(x)=\mu$. 
\end{proof}

From Theorem \ref{thm:Mat}, we immediately obtain the following two corollaries:
\begin{cor}
\label{cor:m0isdense}
Let $(M,g,J)$ be a complete, connected Riemannian K\"ahler manifold. Then, for every $A\in \mbox{Sol}(g)$, 
 the set $M^{0}$ of typical points of $A$ is open and dense in $M$.
\end{cor}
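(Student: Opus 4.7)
The plan is to deduce the corollary directly from Theorem \ref{thm:Mat}. Since $(M,g,J)$ is complete, the Hopf--Rinow theorem guarantees that every two points can be joined by a geodesic, so the hypotheses of Theorem \ref{thm:Mat} are satisfied. Openness of $M^{0}$ has already been observed just before the theorem (the ordered eigenvalues $\mu_{1},\dots,\mu_{n}$ are continuous, so the function $m$ counting distinct eigenvalues is lower semi-continuous, and hence the superlevel set $\{m = \max_{y} m(y)\}$ is open). So the real content is density.

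For density, set $m_{\max} := \max_{y \in M} m(y)$ and fix some point $x_{0}$ where this maximum is attained. Let
\[
S := \{\, i \in \{1,\dots,n-1\} : \mu_{i}(x_{0}) < \mu_{i+1}(x_{0})\,\},
\]
so that $|S| = m_{\max} - 1$. For each $i \in S$, apply part (2) of Theorem \ref{thm:Mat} to conclude that the set
\[
W_{i} := \{\, y \in M : \mu_{i}(y) < \mu_{i+1}(y)\,\}
\]
is dense in $M$; it is also open, because $\mu_{i}$ and $\mu_{i+1}$ are continuous. Then $W := \bigcap_{i \in S} W_{i}$ is a finite intersection of open dense sets, hence itself open and dense. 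At any $y \in W$, the chain $\mu_{1}(y) \le \dots \le \mu_{n}(y)$ has strict inequalities at every index in $S$, so the number of distinct eigenvalues of $A_{|y}$ is at least $|S| + 1 = m_{\max}$; combined with the definition of $m_{\max}$, this forces $m(y) = m_{\max}$, i.e.\ $y \in M^{0}$. Thus $W \subseteq M^{0}$, and $M^{0}$ is dense in $M$.

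There is no real obstacle here — the statement is a quick consequence of Theorem \ref{thm:Mat}. The only subtle point is recognizing that one does not need to use part (1) of the theorem explicitly in this argument; the global ordering is already built into the proof of part (2), and what drives density is the ``all-or-nothing'' dichotomy that part (2) provides for each consecutive pair of eigenvalue functions. One should simply make sure that $S$ is finite (so the intersection of dense open sets remains dense without invoking Baire) and that lower semi-continuity of $m$ gives the open set $M^{0}$ its correct description.
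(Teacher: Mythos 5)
Your argument is correct and is exactly the deduction the paper intends: the paper states that Corollary \ref{cor:m0isdense} follows immediately from Theorem \ref{thm:Mat} without writing out the details, and your use of Hopf--Rinow to verify the hypothesis, part (2) applied to each index where the eigenvalues separate at a point of maximal $m$, and the finite intersection of open dense sets is the standard way to fill them in. No issues.
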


\begin{cor}[\cite{ApostolovI}]
\label{cor:mult}
Let $(M,g,J)$ be a complete, connected Riemannian K\"ahler manifold and assume $A\in \mbox{Sol}(g)$. Then, at almost every point 
 the multiplicity of a non-constant eigenvalue $\rho$ of $A$ is equal to two. 
\end{cor}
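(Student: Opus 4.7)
My plan is to reduce the statement to the open dense set $M^0$ of typical points furnished by Corollary~\ref{cor:m0isdense} and then exploit the global ordering of eigenvalues in Theorem~\ref{thm:Mat}(1) to force any eigenvalue of multiplicity exceeding two to be constant, contradicting the non-constancy hypothesis.

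First I would restrict to $M^0$, where the number $m$ of distinct eigenvalues of $A$ is constant. On $M^0$ the distinct eigenvalues $\rho_1(x) < \cdots < \rho_m(x)$ are continuous functions with constant even multiplicities $2k_1,\ldots,2k_m$, where $k_1+\cdots+k_m = n$. Relating these to the half-multiplicity ordered eigenvalues $\mu_1 \le \cdots \le \mu_n$ from Theorem~\ref{thm:Mat}, on $M^0$ one has
\begin{equation*}
\mu_{k_1+\cdots+k_{j-1}+1} = \mu_{k_1+\cdots+k_{j-1}+2} = \cdots = \mu_{k_1+\cdots+k_j} = \rho_j.
\end{equation*}
A "non-constant eigenvalue" of $A$ then corresponds to some $\rho_j$ that is not a constant function on $M^0$.

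Next, suppose for contradiction that such a non-constant $\rho_j$ has multiplicity $2k_j$ with $k_j \ge 2$. Then I can choose two consecutive indices $i$ and $i+1$ both lying in the $j$-th block, so that $\mu_i \equiv \mu_{i+1} \equiv \rho_j$ on $M^0$. For any two typical points $x, y \in M^0$, Theorem~\ref{thm:Mat}(1) yields
\begin{equation*}
\rho_j(x) = \mu_i(x) \le \mu_{i+1}(y) = \rho_j(y),
\end{equation*}
and by the symmetric inequality $\rho_j(y) \le \rho_j(x)$, so $\rho_j$ is constant on $M^0$. This contradicts the non-constancy assumption, hence $k_j=1$ and the multiplicity is $2$ at every point of $M^0$. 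Since $M^0$ is open and dense by Corollary~\ref{cor:m0isdense}, this gives the claim at almost every point.

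The main obstacle is merely bookkeeping: one has to set up the identification between "the non-constant eigenvalue $\rho$" appearing in the statement and one of the globally ordered continuous functions $\mu_i$ of Theorem~\ref{thm:Mat}, so that the comparison $\mu_i(x) \le \mu_{i+1}(y)$ between different points can be applied. Once this identification is in place, everything is an immediate consequence of the global ordering together with the density of $M^0$; no further analysis of equation~\eqref{f} is needed.
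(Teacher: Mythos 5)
Your overall strategy is the right one and is surely what the authors mean by ``immediately obtain'': once the non-constant eigenvalue is identified with a fixed consecutive pair $\mu_i\equiv\mu_{i+1}$ of the globally ordered functions from Theorem \ref{thm:Mat}, the double application of part (1), namely $\mu_i(x)\le\mu_{i+1}(y)=\mu_i(y)\le\mu_{i+1}(x)=\mu_i(x)$, forces constancy and yields the contradiction; density of $M^0$ from Corollary \ref{cor:m0isdense} then finishes the proof.

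However, the step you dismiss as ``merely bookkeeping'' --- that the block structure is the \emph{same} at every point of $M^0$, so that $\mu_i\equiv\mu_{i+1}\equiv\rho_j$ holds on all of $M^0$ for one fixed index $i$ --- is exactly where an argument is missing, and it cannot be obtained from part (1) of Theorem \ref{thm:Mat} alone, which is the only part you invoke. The definition of $M^0$ fixes only the \emph{number} $m$ of distinct eigenvalues, not how the ordered values $\mu_1\le\dots\le\mu_n$ are grouped into blocks; a priori the grouping could differ on different connected components of $M^0$. For instance, with $n=3$ and $m=2$, spectra $(1,1,2)$ at $x$ and $(1,2,2)$ at $y$ satisfy every inequality $\mu_i(x)\le\mu_{i+1}(y)$ and $\mu_i(y)\le\mu_{i+1}(x)$ required by part (1), yet the pair $(1,2)$ forms a block at $x$ and not at $y$, so your identity $\mu_{i+1}(y)=\rho_j(y)$ would fail. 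What rules this out is part (2) of Theorem \ref{thm:Mat}: if $\mu_i<\mu_{i+1}$ somewhere, then $\{\mu_i<\mu_{i+1}\}$ is dense, so $\mu_i$ and $\mu_{i+1}$ cannot coincide on any open set. The cleanest repair is to route the argument through part (2) directly: if the non-constant eigenvalue had multiplicity $\ge 4$ at some typical point, then $\mu_i=\mu_{i+1}$ on a whole neighborhood of that point for some $i$ (local constancy of the block structure \emph{is} immediate from continuity and the constancy of $m$ on $M^0$), hence $\mu_i\equiv\mu_{i+1}$ on all of $M$ by part (2), and then your part-(1) computation shows $\mu_i$ is a global constant, contradicting non-constancy. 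With this insertion your proof is complete and agrees with the (unwritten) argument the paper has in mind.
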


\section{Basic properties of solutions $A$ of equation (\ref{f})}
\label{sec:basics}
{ In this section, we  collect some basic technical properties of solutions of equation \eqref{f}. Most of the results are known in folklore; we will give precise references wherever possible.

\subsection{The vector fields $\Lambda$ and $\bar{\Lambda}$ are holomorphic}
\label{sec:Killing}
\begin{lem}[Folklore,  see equation (13) and the sentence below in \cite{DomMik1978}, Proposition 3 of \cite{ApostolovI} and Corollary 3  of \cite{FKMR}]
 \label{lem:Killing}
Let $(M,g,J)$ be a K\"ahler manifold of real dimension $2n\ge 4$ and let be $A\in\mbox{Sol}(g)$. Let $\Lambda$ be the corresponding vector field defined by equation (\ref{f}). Then $\bar{\Lambda}$ is a Killing vector field for the K\"ahler metric $g$,  i.e.,
\begin{align}
g(\nabla_{X}\bar{\Lambda},Y)+g(X,\nabla_{Y}\bar{\Lambda})=0\nonumber
\end{align}
for all $X,Y\in TM$.
\end{lem}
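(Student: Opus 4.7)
The Killing condition $g(\nabla_X\bar\Lambda,Y)+g(X,\nabla_Y\bar\Lambda)=0$ admits a reformulation in terms of $\lambda := \tfrac14\trace A$. By Remark \ref{rem:lambda2}, $\Lambda=\grad\lambda$, and since $\nabla J=0$ we have $\nabla_X\bar\Lambda=J\nabla_X\Lambda$. Writing $H(X,Y):=g(\nabla_X\Lambda,Y)$ for the (symmetric) Hessian of $\lambda$, we obtain
$$(\mathcal L_{\bar\Lambda}g)(X,Y)=-H(X,JY)-H(Y,JX),$$
which, by the symmetry of $H$ and the antisymmetry of $J$ with respect to $g$, vanishes identically iff $H(JX,JY)=H(X,Y)$, i.e., iff $\nabla^2\lambda$ is $J$-invariant. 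So it suffices to prove this $J$-invariance.

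To derive it, I would differentiate (\ref{f}) covariantly in a direction $Z$ and antisymmetrize in $(Z,X)$. Applying the Ricci identity for the $(1,1)$-tensor $A$ then yields an integrability condition of the form
$$R(Z,X)(AY)-A(R(Z,X)Y)=\Psi(Z,X,Y),$$
where $\Psi$ is an explicit vector-valued expression linear in $\nabla\Lambda$ and $\nabla\bar\Lambda$ (and depending on $X,Y,Z,\Lambda,\bar\Lambda,J,g$), obtained by differentiating the right-hand side of (\ref{f}). I would then contract this identity against a local orthonormal frame using a pairing that involves the complex structure $J$ (rather than a plain $g$-trace), exploit the K\"ahler symmetries $R(JV,JW)=R(V,W)$ and $R(V,W)J=JR(V,W)$ together with $AJ=JA$, and use that $H$ is symmetric (because $\Lambda$ is a gradient) and $g(\nabla_X\Lambda,JY)=-g(\nabla_X\bar\Lambda,Y)$ (because $\bar\Lambda=J\Lambda$) to rewrite all $\nabla\Lambda$-terms as $\nabla\bar\Lambda$-terms. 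A final application of the first Bianchi identity to the remaining curvature contribution should produce an equation whose symmetric part in $(X,Y)$ is a nonzero multiple of $H(JX,JY)-H(X,Y)$, forcing this quantity to be zero and completing the proof.

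The principal obstacle is identifying the correct contraction. A plain $g$-trace in the $Y$-slot of the integrability condition yields only an identity automatically implied by the symmetry of $H$ and the relation $\bar\Lambda=J\Lambda$, and therefore carries no new information. The effective contraction must involve $J$ nontrivially, and the subsequent management of the resulting K\"ahler curvature terms is the bulk of the technical work; this is why the statement is traditionally labeled ``folklore'' and proved in the cited references \cite{DomMik1978,ApostolovI,FKMR} via essentially the same integrability analysis carried out in each author's own notation.
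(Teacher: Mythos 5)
Your opening reduction is correct and is in fact already in the paper: the equivalence ``$J\grad\lambda$ is Killing $\Leftrightarrow$ $\nabla\grad\lambda$ is $J$-invariant'' is precisely Lemma \ref{lem:holomorph} (Killing $\Leftrightarrow$ holomorphic for fields of the form $J\grad f$), and your computation of $\mathcal L_{\bar\Lambda}g$ in terms of the Hessian $H$ is right. Note also that the paper itself offers no proof of Lemma \ref{lem:Killing} --- it is labelled folklore and delegated to \cite{DomMik1978,ApostolovI,FKMR} --- so there is no in-paper computation to measure yours against; the question is only whether your text constitutes a proof on its own.

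It does not: everything after ``I would differentiate (\ref{f})'' is a plan rather than an argument, and the step that actually carries the content is missing. You never write down the integrability condition $\Psi$, never specify the contraction, and never verify that the K\"ahler curvature symmetries and the Bianchi identity really leave a nonzero multiple of $H(JX,JY)-H(X,Y)$; the word ``should'' is doing all the work, and you yourself concede that ``identifying the correct contraction'' is the principal obstacle. This obstacle is genuine, not bookkeeping. Your observation that the plain $g$-trace over the $Y$-slot is vacuous is correct (the left-hand side is the trace of a commutator, hence zero, and the trace of the right-hand side vanishes by the symmetry of $H$ together with $\nabla\bar\Lambda=J\nabla\Lambda$); but the most obvious $J$-twisted replacement, namely $\trace\bigl(J\circ(R(Z,X)\circ A-A\circ R(Z,X))\bigr)$, is \emph{also} identically zero, since $J$ commutes with $R(Z,X)$ on a K\"ahler manifold and the corresponding trace of the right-hand side cancels as well. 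So the contraction that works must couple the $Y$-slot to $Z$ or $X$ (this is how the cited references proceed from the index form \eqref{eq:hpr}), and until that contraction is exhibited and the resulting curvature terms are actually eliminated, the lemma has not been proved --- it has been deferred to \cite{DomMik1978,ApostolovI,FKMR}.
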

It is a well-known fact that if a Killing vector field $K$  vanishes on some open  nonempty subset $U$ of the connected manifold $M$, then $K$  vanishes on the whole  $M$. From this, we  conclude
\begin{cor}
\label{cor:htrafo2}
Let $(M,g,J)$ be a connected K\"ahler manifold of real dimension $2n\geq 4$ and let $v$ be an  $h$-projective vector field.
\begin{enumerate}
\item If $v$ restricted to  some open  nonempty subset $U\subseteq M$ is a Killing vector field, then $v$ is a Killing vector field on the whole  $M$. 
\item If $v$ is not identically zero, the set of points $M_{v\neq0}:=\{x\in M:v(x)\neq 0\}$ is open and dense in $M$.
\end{enumerate}
\end{cor}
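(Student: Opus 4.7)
The plan for part (1) is to combine the $h$-projective solution $A_v \in \mbox{Sol}(g)$ produced by Lemma \ref{lem:htrafo} with the Killing property of $\bar{\Lambda}$ established in Lemma \ref{lem:Killing}, and to use the quoted unique-continuation property of Killing fields on a connected manifold to extend the hypothesis ``$v$ is Killing on $U$'' globally. Part (2) will then follow by a short contradiction argument relying on (1).

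In detail, the chain of implications I would set up for (1) is as follows. On $U$, where $\mathcal{L}_v g \equiv 0$, the defining formula \eqref{hprojective vector field} immediately gives $A_v \equiv 0$. By Remark \ref{rem:lambda2}, the vector field $\Lambda_v$ associated to $A_v$ equals $\grad(\tfrac{1}{4}\trace A_v)$, hence $\Lambda_v \equiv 0$ on $U$, and therefore also $\bar{\Lambda}_v = J(\Lambda_v) \equiv 0$ on $U$. By Lemma \ref{lem:Killing}, $\bar{\Lambda}_v$ is a Killing field on the connected manifold $M$; the quoted fact about Killing fields then forces $\bar{\Lambda}_v \equiv 0$, and hence $\Lambda_v \equiv 0$, on all of $M$. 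Substituting $\Lambda_v \equiv 0$ into equation \eqref{f} shows that $\nabla A_v \equiv 0$, so $A_v$ is parallel; since $A_v$ vanishes on $U$ and $M$ is connected, $A_v \equiv 0$ on $M$. To finish, I would take the trace of \eqref{hprojective vector field} in real dimension $2n$, obtaining $\trace A_v = \tfrac{1}{n+1}\trace(g^{-1}\circ\mathcal{L}_v g)$; so $A_v \equiv 0$ forces $\trace(g^{-1}\circ\mathcal{L}_v g) \equiv 0$, and then \eqref{hprojective vector field} itself collapses to $\mathcal{L}_v g \equiv 0$, proving that $v$ is Killing on $M$.

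For part (2), openness of $M_{v\ne 0}$ is clear from continuity of $v$. For density, I would argue by contradiction: if the complement $\{v = 0\}$ had a nonempty interior $U$, then $v \equiv 0$ on $U$ would be trivially Killing there, so by (1) the field $v$ would be Killing on all of $M$; but then the vanishing of $v$ on the open set $U$ combined with the same unique-continuation fact for Killing fields would give $v \equiv 0$ on $M$, contradicting the hypothesis.

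The only genuinely non-formal step is the globalization $\bar{\Lambda}_v \equiv 0$ on $M$, which hinges on Lemma \ref{lem:Killing}; the final trace step is also worth highlighting, since without it one would only be able to conclude that $v$ is conformal rather than Killing. Everything else is a straightforward chain of substitutions and applications of connectedness.
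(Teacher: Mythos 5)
Your proof is correct and follows essentially the same route as the paper: local triviality of a solution of \eqref{f} forces the associated $\Lambda$ to vanish on an open set, the Killing property of $\bar{\Lambda}$ (Lemma \ref{lem:Killing}) together with unique continuation globalizes this, and then \eqref{f} makes the solution covariantly constant, hence globally trivial, after which part (2) follows exactly as you describe. The only difference is that you work with the infinitesimal solution $A_v$ from Lemma \ref{lem:htrafo} and close the loop with a trace computation (which you carry out correctly, including the factor $\tfrac{1}{n+1}$), whereas the paper works with the finite-time tensor $A_t=A(g,(\Phi^v_t)^*g)$ for small $t$ and shows $A_t=\Id$ everywhere; the two arguments are equivalent up to linearization.
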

\begin{proof}
$(1)$ Suppose  the restriction of $v$ to an  open subset $U$ is a Killing vector field. Then,  $\bar{g}_{t}=(\Phi^{v}_{t})^{*}g$ restricted to $U'\subset U$ is equal to $g_{|U'}$ for sufficiently small  $t$.  Hence, $A_{t|U'}=A(g,\bar{g}_{t})_{|U'}=\Id$. The corresponding vector field $\Lambda_t= \tfrac{1}{4} \grad\, \trace\,{A_t}$ vanishes (on $U'$)   implying  $\bar{\Lambda}_{t}$ vanishes  (on $U'$) as well.    Since $\bar{\Lambda}_{t}$ is a Killing vector field,   $\bar{\Lambda}_{t}$ vanishes  on the whole manifold implying   $\Lambda_{t}$ is equal to zero on the whole  $M$. Then, by \eqref{f},   the $(1,1)$-tensor  $A_{t}-\Id$ is  covariantly constant  on  the whole $M$. Since this tensor vanishes on   $U'$, it vanishes on the whole manifold. Finally, $A_{t}=\Id$ on $M$, implying that $v$ is a Killing vector field on  $M$. This proves part $(1)$ of Corollary \ref{cor:htrafo2}. 

$(2)$ Suppose  $v$ vanishes on some open subset $U\subseteq{M}$. To prove $(2)$, we have to show that $v=0$ everywhere on $M$. From part $(1)$ we can conclude that $v$ is a Killing vector field on $M$. Since $v$ vanishes on (open, nonempty) $U$, it  vanishes on the whole  $M$.
\end{proof}

The next lemma, which is  a wellknown and  standard result in Kähler geometry (we give a proof for self-containedness),  combined with Lemma \ref{lem:Killing}  shows that $\bar{\Lambda}$ is a holomorphic vector field. 
\begin{lem}
\label{lem:holomorph}
Let $(M,g,J)$ be a K\"ahler manifold. Let $K$ be a vector field of the form $K=J\grad\,f$ for some function $f$. Then $K$ is a Killing vector field for $g$, if and only if $K$ is holomorphic.
\end{lem}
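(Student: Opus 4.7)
The plan is to rewrite both the Killing condition and the holomorphicity condition for $K=J\grad f$ as equivalent algebraic conditions on the Hessian of $f$, and then observe that these two conditions literally coincide. The key ingredients are that $\nabla J=0$ on a K\"ahler manifold, that $J$ is skew with respect to $g$ (i.e.\ $g(JV,W)=-g(V,JW)$), and that $\mathrm{Hess}\,f(X,Y):=g(\nabla_X\grad f,Y)$ is symmetric.

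First I would compute $\nabla_X K$. Since $\nabla J=0$, one has $\nabla_X K=\nabla_X(J\grad f)=J(\nabla_X\grad f)$. Plugging this into the Killing equation $g(\nabla_X K,Y)+g(\nabla_Y K,X)=0$ and using that $J$ is $g$-skew, I get
\begin{equation*}
g(\nabla_X K,Y)=g(J\nabla_X\grad f,Y)=-g(\nabla_X\grad f,JY)=-\mathrm{Hess}\,f(X,JY),
\end{equation*}
so the Killing condition becomes
\begin{equation*}
\mathrm{Hess}\,f(X,JY)+\mathrm{Hess}\,f(Y,JX)=0\quad\text{for all }X,Y.\tag{$\ast$}
\end{equation*}

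Next I would rewrite the holomorphicity condition $\mathcal{L}_K J=0$. A standard K\"ahler computation using $\nabla J=0$ and torsion-freeness gives $[K,JX]=J\nabla_K X-\nabla_{JX}K$ and $J[K,X]=J\nabla_K X-J\nabla_X K$, so $(\mathcal{L}_K J)(X)=J\nabla_X K-\nabla_{JX}K$. Thus $K$ is holomorphic if and only if $J\nabla_X K=\nabla_{JX}K$ for all $X$. Substituting $\nabla_X K=J\nabla_X\grad f$ and $J^2=-\Id$, this becomes $\nabla_{JX}\grad f=J\nabla_X\grad f$, which after pairing with $Y$ and using $g$-skewness of $J$ reads
\begin{equation*}
\mathrm{Hess}\,f(JX,Y)=-\mathrm{Hess}\,f(X,JY).
\end{equation*}
By symmetry of $\mathrm{Hess}\,f$ this is exactly $(\ast)$, so the two conditions coincide and the lemma follows.

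There is no real obstacle here; the whole proof is a two-line computation once one remembers that on a K\"ahler manifold $J$ is parallel and $g$-skew. The only place to be careful is the sign bookkeeping in the derivation of $(\mathcal{L}_K J)(X)=J\nabla_X K-\nabla_{JX}K$ and in the skew-symmetry identity $g(JV,W)=-g(V,JW)$; both conditions produce $(\ast)$ precisely because of these two facts together with the symmetry of the Hessian.
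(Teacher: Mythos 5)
Your proof is correct and follows essentially the same route as the paper: both arguments rest on $\nabla J=0$, the formula $(\mathcal{L}_KJ)X=J\nabla_XK-\nabla_{JX}K$, the $g$-skewness of $J$, and the symmetry of $\nabla\grad f$, the only difference being that you reduce both conditions to a common identity on $\mathrm{Hess}\,f$ while the paper directly identifies $g(Y,(\mathcal{L}_KJ)X)$ with minus the Killing form evaluated at $(JX,Y)$. No gaps.
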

\begin{proof}  We use that 
  $\nabla J=0$ and  that  $\nabla\grad\,f$ is a self-adjoint $(1,1)$-tensor. By direct calculation, we obtain 
\begin{align}
g(Y,(\mathcal{L}_{K}J)X)&=g(Y,J\nabla_{X}K)-g(Y,\nabla_{JX}K)=-g(Y,\nabla_{X}\grad\,f)-g(Y,\nabla_{JX}K)\nonumber\\
&=-g(X,\nabla_{Y}\grad\,f)-g(Y,\nabla_{JX}K)=-g(JX,\nabla_{Y}K)-g(\nabla_{JX}K,Y)\nonumber
\end{align} 
for arbitrary vectors $X$ and $Y$. It follows that $\mathcal{L}_{K}J=0$, if and only if $K$ satisfies the Killing equation as we claimed.
\end{proof}
\begin{cor}[\cite{ApostolovI}]
\label{cor:com}
Let $(M,g,J)$ be a K\"ahler manifold of real dimension $2n\geq 4$.  
Then, for every $A\in\mbox{Sol}(g)$ the vector fields $\Lambda$ and $\bar{\Lambda}$ from  (\ref{f}) are holomorphic and commuting,  i.e.,
\begin{align}
\mathcal{L}_{\Lambda}J=\mathcal{L}_{\bar{\Lambda}}J=0\mbox{ and }[\Lambda,\bar{\Lambda}]=0.\nonumber
\end{align} 
\end{cor}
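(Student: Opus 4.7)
The plan is to derive everything from Lemma \ref{lem:Killing} (which asserts that $\bar\Lambda$ is Killing), the fact $\Lambda=\grad\lambda$ with $\lambda=\tfrac14\trace A$ (Remark \ref{rem:lambda2}), the identity $\bar\Lambda=J\Lambda$, and the K\"ahler condition $\nabla J=0$.

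First I would handle $\bar\Lambda$: since $\bar\Lambda=J\grad\lambda$, Lemma \ref{lem:Killing} combined with Lemma \ref{lem:holomorph} gives $\mathcal L_{\bar\Lambda}J=0$ immediately.

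Next I would show $\mathcal L_\Lambda J=0$. Using $\nabla J=0$ and the fact that $\nabla$ is torsion-free, a direct expansion of $\mathcal L_\Lambda J(X)=[\Lambda,JX]-J[\Lambda,X]$ collapses to $J\nabla_X\Lambda-\nabla_{JX}\Lambda$, so it suffices to show that $\nabla\Lambda$ commutes with $J$. For this I would combine two facts: $\nabla\Lambda$ is the Hessian of $\lambda$, hence $g$-symmetric; and $\bar\Lambda=J\Lambda$ is Killing. Writing the Killing equation for $\bar\Lambda$, using $\nabla J=0$ to move $J$ through $\nabla$, and applying the compatibility $g(JX,Y)=-g(X,JY)$ together with the symmetry of the Hessian, this rearranges to $g(\nabla_{JX}\Lambda,Y)=g(J\nabla_X\Lambda,Y)$ for all $X,Y$, yielding $\mathcal L_\Lambda J=0$.

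Finally, the commutation $[\Lambda,\bar\Lambda]=0$ follows from the identity $[\Lambda,J\Lambda]=(\mathcal L_\Lambda J)(\Lambda)+J[\Lambda,\Lambda]$, and both terms vanish by the previous step. I do not anticipate a real obstacle here, since the argument is a direct computation exploiting $\nabla J=0$, self-adjointness of the Hessian, and the two previous lemmas; the only place requiring a moment of care is the sign bookkeeping in Step~2, where one must use $g(X,JY)=-g(JX,Y)$ to translate the Killing condition for $\bar\Lambda$ into the $J$-commuting property of $\nabla\Lambda$.
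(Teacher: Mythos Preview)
Your argument is correct and, for the holomorphicity of $\bar\Lambda$ and the vanishing of $[\Lambda,\bar\Lambda]$, identical to the paper's. The one place you diverge is in establishing $\mathcal L_\Lambda J=0$: the paper simply observes that on a complex manifold multiplication by $J$ sends holomorphic vector fields to holomorphic vector fields (a consequence of the vanishing Nijenhuis tensor), so $\Lambda=-J\bar\Lambda$ is holomorphic as soon as $\bar\Lambda$ is. You instead compute $(\mathcal L_\Lambda J)(X)=J\nabla_X\Lambda-\nabla_{JX}\Lambda$ directly and then feed the Killing equation for $\bar\Lambda=J\Lambda$ together with the $g$-symmetry of the Hessian $\nabla\Lambda$ into it to force $\nabla_{JX}\Lambda=J\nabla_X\Lambda$. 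Your computation is sound (the sign bookkeeping you flagged indeed works out), and it has the mild advantage of being entirely self-contained in the K\"ahler setting without appealing to the general complex-geometric fact; the paper's route is shorter and more conceptual.
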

\begin{proof}
By Remark \ref{rem:lambda2}, $\Lambda$ is the gradient of a function.  Since $\bar{\Lambda}=J\Lambda$ is a Killing vector field, by  Lemma \ref{lem:holomorph} we have that  $\bar{\Lambda}$ is holomorphic. Since  the  multiplication with the complex structure  sends  holomorphic vector fields to  holomorphic vector fields,    $\Lambda$ is holomorphic as well. By direct  calculations,  
$
[\Lambda,\bar{\Lambda}]=(\mathcal{L}_{\Lambda}J)\Lambda+J[\Lambda,\Lambda]=0.\nonumber
$
\end{proof}

\subsection{Covariant derivatives  of the eigenvectors of $A$}
\label{sec:eigenspaces}
Let $A$ be a complex, self-adjoint solution of equation (\ref{f}). On $M^0$, the eigenspace distributions $E_{A}(\mu_i)$  are well-defined  and differentiable. In general, they are 
 not integrable (except  for  the trivial  case when the metrics are affinely equivalent). The next proposition explains the behavior of these  distributions; it is essentially equivalent to  \cite[Proposition 14 and equation $(62)$]{ApostolovI}.
\begin{prop}
\label{prop:modulo}
Let $(M,g,J)$ be a Riemannian K\"ahler manifold and assume  $A\in\mbox{Sol}(g)$. Let $U$ be a smooth field of   eigenvectors  of $A$  defined on some open subset of $M^{0}$.  Let  $\rho$ be the corresponding eigenvalue. Then, for an arbitrary vector $X\in TM$, we have 
\begin{align}
(A-\rho \Id)\nabla_{X}U=X(\rho)U-g(U,X)\Lambda-g(U,\Lambda)X-g(U,JX)\bar{\Lambda}-g(U,\bar{\Lambda})JX.\label{eq:modulo}
\end{align}
Moreover, if  $V$   is an  eigenvector  of $A$  corresponding to an eigenvalue  $\tau\neq\rho$, then $V(\rho)=0$  and $grad \, \rho \in E_{A}(\rho)$. 
\end{prop}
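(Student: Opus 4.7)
The strategy is simply to differentiate the eigenvector relation $AU = \rho U$ covariantly and then substitute the main equation~\eqref{f} into the result. Applying $\nabla_X$ to both sides yields
$$(\nabla_X A)U + A(\nabla_X U) = X(\rho)\,U + \rho\,\nabla_X U,$$
which I rearrange as $(A - \rho\Id)\nabla_X U = X(\rho)U - (\nabla_X A)U$. Plugging~\eqref{f} with $Y = U$ into the right-hand side reproduces formula~\eqref{eq:modulo} verbatim. This first identity is a direct algebraic manipulation with no real obstacle.

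For the second assertion, I specialize $X = V$ in~\eqref{eq:modulo}, where $V$ is an eigenvector with eigenvalue $\tau \neq \rho$. Since $A$ is $g$-self-adjoint, eigenspaces corresponding to distinct eigenvalues are $g$-orthogonal, hence $g(U,V) = 0$. Moreover, $A \circ J = J \circ A$ forces each eigenspace to be $J$-invariant; in particular $JU \in E_A(\rho)$ and $JV \in E_A(\tau)$, so that $g(U,JV) = 0$ and $g(JV,U) = -g(V,JU) = 0$. Now I take the $g$-inner product of~\eqref{eq:modulo} at $X = V$ with the vector $U$. By self-adjointness, the left-hand side becomes $g(\nabla_V U, (A - \rho\Id)U) = 0$, while on the right-hand side every term except $V(\rho)\,g(U,U)$ drops out by the orthogonality relations just noted. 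Since $U \neq 0$, we conclude $V(\rho) = 0$.

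The inclusion $\grad\,\rho \in E_A(\rho)$ then follows immediately: by the step just completed, $g(\grad\,\rho, V) = V(\rho) = 0$ for every $V$ lying in any eigenspace $E_A(\tau)$ with $\tau \neq \rho$, so $\grad\,\rho$ is orthogonal to $\bigoplus_{\tau \neq \rho} E_A(\tau)$ and therefore lies in $E_A(\rho)$. The only mildly subtle point in the whole argument is remembering to use the $J$-invariance of the eigenspaces of $A$ to eliminate the $\bar{\Lambda}$-containing terms of~\eqref{eq:modulo}; everything else is a routine consequence of self-adjointness of $A$ and the definition of an eigenvector.
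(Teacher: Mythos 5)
Your proposal is correct and follows essentially the same route as the paper: differentiate $AU=\rho U$, substitute \eqref{f}, and then set $X=V$ and use the $g$-orthogonality and $J$-invariance of the eigenspaces. The only cosmetic difference is that you extract $V(\rho)=0$ by pairing \eqref{eq:modulo} with $U$, whereas the paper projects onto $E_A(\rho)$ using that the image of the self-adjoint operator $A-\rho\Id$ is orthogonal to its kernel; these are the same argument.
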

\begin{proof}
Using equation (\ref{f}), we obtain
\begin{align}
(\nabla_{X}A)U=g(U,X)\Lambda+g(U,\Lambda)X+g(U,JX)\bar{\Lambda}+g(U,\bar{\Lambda})JX\nonumber
\end{align}
for arbitrary $X\in TM$. On the other hand, since $U\in E_{A}(\rho)$, we calculate
\begin{align}
\nabla_{X}(AU)=\nabla_{X}(\rho U)=X(\rho)U+\rho \nabla_{X}U.\nonumber
\end{align}
Inserting the last two equations in $\nabla_{X}(AU)=(\nabla_{X}A)U+A(\nabla_{X}U)$, we obtain \eqref{eq:modulo}.\\
Now let $\tau$ be another eigenvalue of $A$, such that $\rho\neq\tau$, and let $V\in E_{A}(\tau)$. Replacing $V$ with $X$ in equation \eqref{eq:modulo} and using that $E_{A}(\rho)\perp E_{A}(\tau)$, we obtain
\begin{align}
(A-\rho \Id)\nabla_{V}U=V(\rho)U-g(U,\Lambda)V-g(U,\bar{\Lambda})JV.\nonumber
\end{align}
Since the left-hand side of the equation above is orthogonal  to $E_{A}(\rho)$, we immediately obtain $0=V(\rho)=g(V,\grad\,\rho)$. Thus, $\grad\,\rho$   is orthogonal to all eigenvectors corresponding to eigenvalues different from $\rho$ implying it lies in $E_{A}(\rho)$ as we claimed.
\end{proof}

\section{K\"ahler manifolds of degree of mobility $D(g)=2$ admitting essential $h$-projective vector fields}
\label{sec:D=2}
For closed manifolds, the condition $\mbox{HProj}_0 \ne \mbox{Iso}_0$ is equivalent to the existence of an essential (i.e., not affine) $h$-projective vector field.  The goal of this section is to prove the following
\begin{thm}
\label{thm:system}
Let $(M,g,J)$ be a closed, connected Riemannian  K\"ahler manifold of real dimension $2n\geq 4$ and of degree of mobility $D(g)=2$ admitting an essential $h$-projective vector field. 
Let $A \in Sol(g)$ with the corresponding vector field $\Lambda$. 

Then, almost every point $y\in M$ has a neighborhood $U(y)$ such that there exists a constant $B<0$ and   a smooth  function $\mu:U(y)\to \mathbb{R}$   such that the  system
\begin{align}
\begin{array}{c}
(\nabla_{X}A)Y=g(Y,X)\Lambda + g(Y,\Lambda)X + g(Y,JX)\bar{\Lambda}+g(Y,\bar{\Lambda})JX\vspace{2mm}\\
\nabla_{X}\Lambda=\mu X+B A(X)\vspace{2mm}\\
\nabla_{X}\mu=2Bg(X,\Lambda)
\end{array}\label{eq:system}
\end{align}
is satisfied for all $x$ in $U(y)$ and all $X,Y\in T_xU$.
\end{thm}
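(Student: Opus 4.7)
The plan is to follow the scheme outlined in Section~\ref{sec:scheme}, decomposing the proof into four stages.

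\emph{Step 1 (Evolution of $A$ along the flow of $v$).} Let $v$ be an essential $h$-projective vector field. For each $t$ near $0$, Lemma~\ref{lem:degree} shows that $(\Phi_{t}^{v})^{*}g$ is $h$-projectively equivalent to $g$, so $A_{t}:=A(g,(\Phi_{t}^{v})^{*}g)$ lies in $\mbox{Sol}(g)$. We may assume $A$ is not a multiple of $\Id$ (otherwise the conclusion is trivial), whence $\{\Id,A\}$ is a basis of the two-dimensional space $\mbox{Sol}(g)$ and we obtain $A_{t}=c_{1}(t)\Id+c_{2}(t)A$ for smooth functions $c_{i}$. Differentiating at $t=0$ and invoking Lemma~\ref{lem:htrafo} yields an evolution formula $\mathcal{L}_{v}A=\alpha\Id+\beta A$ for constants $\alpha,\beta$, and a parallel analysis (differentiating once more, applying the same dimensional argument to $\mathcal{L}_{v}\Lambda$) gives an analogous linear evolution for $\Lambda$.

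\emph{Step 2 (Eigenvalue structure).} The evolution formula of Step~1, combined with Proposition~\ref{prop:modulo}, induces a first-order ODE for each eigenvalue $\mu_{i}$ of $A$ along an integral curve of $v$. Explicit solution of this ODE, together with the global ordering from Theorem~\ref{thm:Mat}, forces a rigidity: if more than one $\mu_{i}$ were non-constant, the resulting monotone behaviour along trajectories of $v$ would contradict compactness of $M$ combined with global ordering, leaving $v$ to be affine and contradicting essentiality. Hence exactly one eigenvalue, call it $\rho$, is non-constant on $M$, and Corollary~\ref{cor:mult} guarantees that its multiplicity equals two on the open dense set $M^{0}$. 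Since $\Lambda=\grad(\tfrac{1}{4}\trace A)$ is, up to a constant, the gradient of $\rho$, Proposition~\ref{prop:modulo} places $\Lambda$ in the eigenspace $E_{A}(\rho)$, and hence $\bar{\Lambda}=J\Lambda\in E_{A}(\rho)$ by $J$-invariance of the eigenspace.

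\emph{Step 3 (Geometry of the distribution $\mbox{span}\{\Lambda,\bar{\Lambda}\}$).} On the dense open set where $\Lambda,\bar{\Lambda}$ are independent, let $\mathcal{D}=\mbox{span}\{\Lambda,\bar{\Lambda}\}=E_{A}(\rho)$. Since $[\Lambda,\bar{\Lambda}]=0$ by Corollary~\ref{cor:com}, $\mathcal{D}$ is integrable. Applying Proposition~\ref{prop:modulo} with $U=\Lambda$ expresses $\nabla_{X}\Lambda$ in terms of $X$, $A(X)$, $\Lambda$, and $\bar{\Lambda}$; because $\Lambda\in E_{A}(\rho)$, this collapses to the form $\nabla_{X}\Lambda=\mu X+BA(X)$ on $\mathcal{D}$ for a function $\mu$ and a quantity $B$. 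I expect the formula to extend to all $X\in TM$ by checking components transverse to $\mathcal{D}$ using Proposition~\ref{prop:modulo} applied to an eigenvector of a constant eigenvalue. Differentiating this identity once more and substituting \eqref{f} should produce the third equation of \eqref{eq:system} together with the consistency condition $X(B)=0$, i.e.\ that $B$ is locally constant.

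\emph{Step 4 (Negativity of $B$ from compactness).} The main obstacle is to identify $B$ as a genuine \emph{negative} constant. Since the integral leaves of $\mathcal{D}$ are totally geodesic by Step~3, the restriction of $g$ to a leaf is a Riemannian metric whose coefficients are determined by the parameters $\alpha,\beta$ entering $\mathcal{L}_{v}A$ and by $\Lambda$ via \eqref{hprojective vector field} and the evolution formula. Computing the Gaussian curvature of this leaf metric in terms of $B,\alpha,\beta$, I expect unboundedness---incompatible with closedness of $M$---unless the parameters satisfy a specific algebraic relation; this relation reduces the two-parameter family to one parameter and simultaneously forces $B<0$. The remaining technical point is to propagate the conclusion from a single leaf to a neighbourhood $U(y)$ of an arbitrary typical point, which uses the holomorphicity and commutativity of $\Lambda,\bar{\Lambda}$ (Corollary~\ref{cor:com}) together with the density of $M^{0}$ from Corollary~\ref{cor:m0isdense}.
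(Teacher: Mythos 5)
Your overall architecture matches the paper's (evolution of $A$ along $v$, eigenvalue rigidity via global ordering, geometry of $\spann\{\Lambda,\bar\Lambda\}$, curvature of the leaves), but Step 1 contains a genuine error that undermines everything downstream. You conflate two different objects: the tensor $A_v=\tfrac{d}{dt}A(g,(\Phi^v_t)^*g)\big|_{t=0}$, which by Lemma \ref{lem:htrafo} lies in $\mbox{Sol}(g)$ and hence equals $\alpha\Id+\beta A$, and the Lie derivative $\mathcal{L}_vA$ of the \emph{fixed} solution $A$. The latter is \emph{not} an element of $\mbox{Sol}(g)$ (equation \eqref{f} depends on $g$, which $v$ does not preserve), so no dimension count gives $\mathcal{L}_vA=\alpha\Id+\beta A$. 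The correct evolution, obtained by exploiting that $\{A,\Id\}$ and $\{A^{-1},\Id\}$ are simultaneously bases of $\mbox{Sol}(g)$ and $\mbox{Sol}(\bar g)$ for $\bar g=(\det A)^{-1/2}g\circ A^{-1}$ (Fubini's trick, Lemma \ref{lem:liederivA}), is quadratic: $\mathcal{L}_vA=c_2A^2+c_1A+c_0\Id$ with $c_2\neq0$ precisely because $v$ is essential. This quadratic term is what produces the Riccati equation $\dot\rho=\rho^2+c_1\rho+c_0$ along flow lines, whose only solutions defined for all $t$ are of $\tanh$ type with $\alpha=c_1^2/4-c_0>0$; that is the source of the constant eigenvalues being roots of $X^2+c_1X+c_0$, of the a priori bounds on $\rho$, and ultimately of the sign $B=-\alpha/(4D)<0$. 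With your linear evolution the eigenvalue ODE $\dot\rho=\alpha+\beta\rho$ has only exponential, hence unbounded, non-constant solutions; your Step 2 rigidity argument would then force \emph{every} eigenvalue to be constant and $v$ to be affine, contradicting the $\mathbb{C}P(n)$ example.

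There is a second gap in Step 3: Proposition \ref{prop:modulo} applied to $U=\Lambda$ only determines $(A-\rho\Id)\nabla_X\Lambda$, i.e.\ it determines $\nabla_X\Lambda$ modulo $E_A(\rho)$. The components transverse to $\spann\{\Lambda,\bar\Lambda\}$ are the easy ones ($\nabla_U\Lambda=\tfrac{g(\Lambda,\Lambda)}{\rho-\rho_i}U$ for $U$ in a constant eigenspace); what requires real work is $\nabla_\Lambda\Lambda$ and $\nabla_{\bar\Lambda}\Lambda$, which you claim come for free. In the paper these are pinned down only after computing the leaf metric explicitly (total geodesy of $D_1$ and $E_A(\rho)$ via the quadratic integrals $I_0,I_1,I_2$, block-diagonality of $g$ in adapted coordinates, and integration of $\dot h=(2\rho+C)h$), and even then the three diagonal entries of $\nabla\Lambda$ form an overdetermined $3\times 2$ linear system for $(\mu,B)$ whose solvability is exactly the content of the relation $C=c_1$ supplied by the length-finiteness and curvature-boundedness arguments of Lemma \ref{lem:metriccomp}. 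Your Step 4 correctly anticipates the curvature argument on the leaves, but without the quadratic evolution and the explicit $\tanh$ formulas there is no way to carry out that computation, to verify the compatibility of the overdetermined system, or to extract $B<0$.
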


One should understand \eqref{eq:system} as the system of PDEs on the components of $(A, \Lambda, \mu)$.  
Actually, in the system \eqref{eq:system}, the first equation is the equation \eqref{f} and 
 is fulfilled by the definition of $\mbox{Sol}(g)$, so our goal is to prove the local 
 existence of  $B$ and $\mu$ such that the second and the third equation of \eqref{eq:system} are fulfilled. 

\begin{rem} If $D(g)\geq 3$, the conclusion of this theorem is still true if we allow all, i.e., not  necessary negative,  values of $B$. In this case we even do not need 
the  `closedness' assumptions (i.e., the statement is local) and the existence of an $h$-projective vector field, see \cite{FKMR}.  Theorem \ref{thm:system} essentially needs the existence of an $h$-projective vector field and is not true locally. 
\end{rem}

\subsection{The tensor $A$ has at most two constant and precisely  one non-constant eigenvalue}
First let us prove
\label{sec:onenonconstant}
\begin{lem}
\label{lem:htrafodegree}
Let $(M,g,J)$ be a K\"ahler manifold of real dimension $2n\ge 4$ and of degree of mobility $D(g)=2$. Suppose $f:M\rightarrow M$ is an $h$-projective transformation for $g$ and let $A$ be an element of $\mbox{Sol}(g)$. Then $f$ maps the set $M^{0}$ of typical points of $A$  onto $M^{0}$.
\end{lem}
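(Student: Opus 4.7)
The plan is to exploit the constraint that $\mbox{Sol}(g)$ is two-dimensional together with the naturality of equation \eqref{f} under $h$-projective transformations. If $A$ is proportional to $\Id$ then $M^0 = M$ and there is nothing to prove, so I assume $\{\Id, A\}$ is a basis of $\mbox{Sol}(g)$.

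First I set $\bar g := f^* g$. Since $f$ is $h$-projective, $\bar g$ is $h$-projectively equivalent to $g$, so $C := A(g, \bar g)$ is a non-degenerate element of $\mbox{Sol}(g)$; by the two-dimensionality assumption one can write $C = c_1 \Id + c_2 A$ for some real constants $c_1, c_2$. Next, since $f$ preserves $J$, equation \eqref{f} is natural with respect to $f$, so $f^* A$ satisfies equation \eqref{f} with respect to $f^* g = \bar g$, hence $f^* A \in \mbox{Sol}(\bar g)$. Applying the inverse of the isomorphism of Lemma \ref{lem:degree} (which sends $A_2 \in \mbox{Sol}(\bar g)$ back to $A_2 \circ C \in \mbox{Sol}(g)$), we conclude $(f^* A) \circ C \in \mbox{Sol}(g)$, so
\begin{equation*}
(f^* A) \circ C = a \Id + b A
\end{equation*}
for some constants $a, b \in \mathbb{R}$. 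Equivalently, $f^* A = (a\Id + b A)(c_1 \Id + c_2 A)^{-1}$.

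Now I read off the eigenvalues pointwise. Since $A$, $\Id$ and $C$ commute and $C$ is invertible, at any point $x$ the tensor $(f^*A)_x$ is simultaneously diagonalisable with $A_x$, and its eigenvalues are obtained from those of $A_x$ by the Möbius-type map
\begin{equation*}
\mu \longmapsto \frac{a + b\, \mu}{c_1 + c_2\, \mu}.
\end{equation*}
This map is injective whenever $a c_2 - b c_1 \neq 0$; if instead $a c_2 = b c_1$, the right-hand side is a constant scalar, so $f^* A$ would be a multiple of $\Id$, forcing $A$ itself to be a multiple of $\Id$ and contradicting my assumption. Thus the Möbius map is injective, so the number of distinct eigenvalues of $f^* A$ at $x$ equals the number of distinct eigenvalues of $A$ at $x$, i.e.\ $m_{f^* A}(x) = m_A(x)$.

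Finally, for any diffeomorphism $f$ and any $(1,1)$-tensor $A$ the endomorphism $(f^* A)_x = (df_x)^{-1} \circ A_{f(x)} \circ df_x$ is conjugate to $A_{f(x)}$, hence has exactly the same eigenvalues with the same multiplicities; in particular $m_{f^*A}(x) = m_A(f(x))$. Combining with the previous step gives $m_A(f(x)) = m_A(x)$ for every $x \in M$, so $f(M^0) \subseteq M^0$. Since $\mbox{HProj}(g,J)$ is a group, $f^{-1}$ is also an $h$-projective transformation and the same argument yields $f^{-1}(M^0) \subseteq M^0$, so $f$ maps $M^0$ bijectively onto $M^0$. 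The main delicate point is to ensure the Möbius map is honestly injective (in particular that $C$ is invertible, which is immediate from formula \eqref{eq:a}, and that $ac_2 \neq bc_1$, which follows from $A \not\propto \Id$); everything else is essentially bookkeeping once $\dim \mbox{Sol}(g) = 2$ is brought to bear.
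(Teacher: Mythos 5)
Your proof is correct and follows essentially the same route as the paper: both exploit $D(g)=2$ together with the isomorphism of Lemma \ref{lem:degree} to write $f^{*}A$ as a fractional-linear expression in $A$ with constant coefficients, and then observe that the induced map on eigenvalues is injective, so the number of distinct eigenvalues is preserved. The only difference is cosmetic: the paper splits into the two cases $A(g,f^{*}g)\propto\Id$ and $A(g,f^{*}g)\not\propto\Id$ (i.e.\ $c_2=0$ versus $c_2\neq 0$ in your notation), whereas you merge them into a single M\"obius-map argument with the non-degeneracy check $ac_2-bc_1\neq 0$, which is a clean unification of the same idea.
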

\begin{proof}
Let $x$ be a point of $M^{0}$. Since the characteristic polynomial of $(f^{*}A)_{|x}$ is the same as for $A_{|f(x)}$, we have to show that the number of different eigenvalues of $(f^{*}A)_{|x}$ and $A_{|x}$ coincide. If $A$ is proportional to the identity on $TM$, the assertion follows immediately. Let us therefore assume that $\{A,\Id\}$ is a basis for $\mbox{Sol}(g)$. We can find neighborhoods $U_{x}$ and $f(U_{x})$ of $x$ and $f(x)$ respectively, such that $A$ is non-degenerate in these neighborhoods (otherwise we add $t \cdot \Id$ to $A$ with a sufficiently large $t\in \mathbb{R}_+$). By \eqref{inverse},  $\bar{g}=(\det\,A)^{-\frac{1}{2}}g\circ A^{-1},g,f^{*}g$ and $f^{*}\bar{g}$ are $h$-projectively equivalent to each other in  $U_{x}$. By direct calculation, we see  
 that $f^{*}A=f^{*}A(g,\bar{g})=A(f^{*}g,f^{*}\bar{g})$.  Hence, $f^{*}A$ is contained in $\mbox{Sol}(f^{*}g)$. First suppose that $A(g,f^{*}g)$ is proportional to the identity. We obtain that
\begin{align}
f^{*}A=\alpha A+\beta \Id\nonumber
\end{align}
for some constants $\alpha,\beta$. Since $\alpha\neq 0$ (if $A$ is non-proportional to $\Id$, the same holds for $f^{*}A$), the number of different eigenvalues of $(f^{*}A)_{|x}$ is the same as for $A_{|x}$. It follows that $f(x)\in M^{0}$. Now suppose that $A(g,f^{*}g)$ is non-proportional to $\Id$. Then, 
 the numbers of different eigenvalues for $A_{|x}$ and $A(g,f^{*}g)_{|x}$ coincide. By Lemma \ref{lem:degree}, $D(f^{*}g)=2$ and $\{A(g,f^{*}g)^{-1},\Id\}$ is a basis for $\mbox{Sol}(f^{*}g)$. We obtain that
\begin{align}
f^{*}A=\gamma A(g,f^{*}g)^{-1}+\delta \Id\nonumber
\end{align}
for some constants $\gamma\neq0$ and $\delta$. It follows that the numbers of different eigenvalues of $(f^{*}A)_{|x}$ and $A(g,f^{*}g)^{-1}_{|x}$ coincide. Thus, the number of different eigenvalues of $(f^{*}A)_{|x}$ is equal to the number of different eigenvalues of $A_{|x}$. Again we have that  $f(x)\in M^{0}$ as we claimed.
\end{proof}

\begin{con}
\label{con:D=2}
In what follows, $(M,g,J)$ is a closed, connected Riemannian K\"ahler manifold of real dimension $2n\geq 4$ and of degree of mobility $D(g)=2$. We assume that  $v$ is  an 
$h$-projective vector field which is not  affine. 
 We chose a real number $t_{0}$ such that the pullback $\bar{g}:=(\Phi^{v}_{t_{0}})^{*}g$ \weg{is $h$-projectively equivalent to $g$, but} is  not  affinely equivalent to $g$. Let $A=A(g,\bar{g})$ be the corresponding element in $\mbox{Sol}(g)$ constructed by formula \eqref{eq:a}.
\end{con}

\begin{lem}
\label{lem:liederivA}
The tensor $A$ and the $h$-projective vector field $v$ satisfy
\begin{align}
\mathcal{L}_{v}A=c_{2}A^{2}+c_{1}A+c_{0}\Id\label{eq:liederivA}
\end{align}
for some constants $c_{2}\neq 0,c_{1},c_{0}$. 
\end{lem}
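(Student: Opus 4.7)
The plan is to leverage the two-dimensionality of $\mbox{Sol}(g)$ together with the naturality of the construction $A(\cdot,\cdot)$ under pullback. Since $\bar g = (\Phi^v_{t_0})^* g$ is not affinely equivalent to $g$, Remark \ref{rem:lambda} forces $\Lambda$ to be not identically zero, so $\trace A$ is non-constant and $\{\Id, A\}$ is a basis of $\mbox{Sol}(g)$. I would then consider the curve
\begin{equation*}
B(s) := A(g, (\Phi^v_s)^* g) \in \mbox{Sol}(g)
\end{equation*}
and expand $B(s) = a(s)\Id + b(s) A$, with boundary values $a(0)=1,\ b(0)=0$ and $a(t_0)=0,\ b(t_0)=1$.

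The key identity I would establish is
\begin{equation*}
(\Phi^v_s)^* A = B(s+t_0) \circ B(s)^{-1}.
\end{equation*}
This follows by combining the naturality $(\Phi^v_s)^* A(g,h) = A((\Phi^v_s)^*g, (\Phi^v_s)^*h)$ applied to $h = \bar g$ with the composition law $A(g_2,g_3)\circ A(g_1,g_2) = A(g_1,g_3)$, which is immediate from formula \eqref{eq:a} since the determinant prefactors telescope and $g_2\circ g_2^{-1}$ cancels. Because $B(s)$ and $B(s+t_0)$ lie in the commutative subalgebra $\spann\{\Id,A\}$, differentiating at $s=0$ (using $B(0)=\Id$ and $B(t_0)=A$) is routine and yields $\mathcal{L}_v A = B'(t_0) - A\circ B'(0)$. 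Substituting $B'(t_0) = a'(t_0)\Id + b'(t_0) A$ and $B'(0) = a'(0)\Id + b'(0) A$ gives
\begin{equation*}
\mathcal{L}_v A = -b'(0)\, A^2 + (b'(t_0)-a'(0))\, A + a'(t_0)\,\Id,
\end{equation*}
which is already of the required form with $c_2 = -b'(0)$, $c_1 = b'(t_0)-a'(0)$, $c_0 = a'(t_0)$.

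The main obstacle — and the only step that uses both closedness of $M$ and the essentialness of $v$ — is to show $c_2 \neq 0$. I would argue by contradiction: if $b'(0)=0$, then by Lemma \ref{lem:htrafo} the tensor $B'(0) = A_v$ reduces to $a'(0)\,\Id$. Unwinding $A_v = g^{-1}\mathcal{L}_v g - \tfrac{1}{2(n+1)}\trace(g^{-1}\mathcal{L}_v g)\,\Id$ and taking the trace then forces $\mathcal{L}_v g = \sigma g$ with $\sigma = (n+1)\,a'(0)$ a constant, so $v$ is homothetic. On a closed Riemannian manifold the flow of such a $v$ rescales the volume form by $e^{n\sigma s}$, which by finiteness of the total volume forces $\sigma = 0$; thus $v$ is Killing, in particular affine, contradicting the essentialness assumption of Convention \ref{con:D=2}. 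This last step is the conceptual heart of the argument, whereas the first two are essentially bookkeeping in the two-dimensional algebra $\mbox{Sol}(g)$.
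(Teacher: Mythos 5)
Your proposal is correct, and it takes a genuinely different route from the paper's main-text proof, though it is close in spirit to the alternative proof given in the appendix. The paper expands $g^{-1}\circ\mathcal{L}_{v}g$ and $\bar{g}^{-1}\circ\mathcal{L}_{v}\bar{g}$ in the two solution spaces $\mbox{Sol}(g)$ and $\mbox{Sol}(\bar g)$ via Lemma \ref{lem:htrafo}, relates them through $\bar g=(\det A)^{-1/2}g\circ A^{-1}$ (a Fubini-type computation), and obtains \eqref{eq:liederivA} with $c_2,c_0$ constant but $c_1$ a priori only a \emph{function}; it then needs a separate eigenvector argument (via Proposition \ref{prop:modulo} and the relation $(A-\rho\Id)[v,U]=0$) to show $dc_1\equiv 0$. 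Your cocycle identity $(\Phi^v_s)^*A=B(s+t_0)\circ B(s)^{-1}$, with $B(s)=A(g,(\Phi^v_s)^*g)$ a curve in the two-dimensional space $\mbox{Sol}(g)$, bypasses that extra step entirely: since $a(s),b(s)$ are coordinates with respect to a fixed basis of a finite-dimensional vector space, the coefficients $c_2=-b'(0)$, $c_1=b'(t_0)-a'(0)$, $c_0=a'(t_0)$ are manifestly real constants. Both the naturality $(\Phi^v_s)^*A(g,h)=A((\Phi^v_s)^*g,(\Phi^v_s)^*h)$ and the telescoping law $A(g_2,g_3)\circ A(g_1,g_2)=A(g_1,g_3)$ are immediate from \eqref{eq:a} (the paper itself uses the former in Lemma \ref{lem:htrafodegree}), and closedness of $M$ guarantees completeness of $v$, so $B(s)$ is defined for all $s$. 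Your treatment of $c_2\neq 0$ matches the paper's: if $b'(0)=0$ then $B'(0)=A_v=a'(0)\Id$, which after taking traces forces $\mathcal{L}_v g=(n+1)a'(0)\,g$; the paper stops here because an infinitesimal homothety automatically preserves the Levi-Civita connection and is therefore affine, whereas you additionally pass through the volume argument to conclude $a'(0)=0$ and hence that $v$ is Killing --- slightly longer, but equally valid. The net effect of your approach is a shorter and more transparent proof that makes the constancy of all three coefficients automatic rather than something to be verified afterwards.
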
   
\begin{proof}
 Note that the vector field $v$ is also $h$-projective with respect to the metric $\bar{g}$ and the degrees of mobility of the metrics $g$ and $\bar{g}$ are both equal to two (see Lemma \ref{lem:degree}). Since $A=A(g,\bar{g})$ is not proportional to the identity and $A(\bar{g},g)=A(g,\bar{g})^{-1}\in\mbox{Sol}(\bar{g})$, we obtain that $\{A,\Id\}$ and $\{A^{-1},\Id\}$ form bases for $\mbox{Sol}(g)$ and $\mbox{Sol}(\bar{g})$ respectively. It follows from Lemma \ref{lem:htrafo} that
\begin{align}
\begin{array}{l}
g^{-1}\circ\mathcal{L}_{v}g-\frac{\trace(g^{-1}\circ\mathcal{L}_{v}g)}{2(n+1)}\Id=\beta_{1}A+\beta_{2}\Id,\vspace{2mm}\\
\bar{g}^{-1}\circ\mathcal{L}_{v}\bar{g}-\frac{\trace(\bar{g}^{-1}\circ\mathcal{L}_{v}\bar{g})}{2(n+1)}\Id=\beta_{3}A^{-1}+\beta_{4}\Id
\end{array}\label{eq:liederivA4}
\end{align}
for some constants $\beta_{1},\beta_{2},\beta_{3}$ and $\beta_{4}$. Taking the trace on both sides of the above equations, we see that they are equivalent to
\begin{align}
\begin{array}{l}
g^{-1}\circ\mathcal{L}_{v}g=\beta_{1}A+\left(\frac{1}{2}\beta_{1}\,\trace\,A+(n+1)\beta_{2}\right)\Id,\vspace{2mm}\\
\bar{g}^{-1}\circ\mathcal{L}_{v}\bar{g}=\beta_{3}A^{-1}+\left(\frac{1}{2}\beta_{3}\,\trace\,A^{-1}+(n+1)\beta_{4}\right)\Id.
\end{array}\label{eq:liederivA1}
\end{align}
By  \eqref{inverse},  $\bar{g}$ can be written as $\bar{g}=(\det\,A)^{-\frac{1}{2}}g\circ A^{-1}$. Then, 
\begin{align}
\bar{g}^{-1}\circ\mathcal{L}_{v}\bar{g}&\stackrel{\eqref{inverse}}{=}(\det\,A)^{\frac{1}{2}}A\circ g^{-1}\circ\mathcal{L}_{v}((\det\,A)^{-\frac{1}{2}}g\circ A^{-1})\nonumber\\
&=-\frac{1}{2}(\det\,A)^{-1}(\mathcal{L}_{v}\det\,A)\Id+A\circ(g^{-1}\circ\mathcal{L}_{v}g)\circ A^{-1}-(\mathcal{L}_{v}A)\circ A^{-1}.\nonumber
\end{align}
We insert the second equation of \eqref{eq:liederivA1} in the left-hand side, the first equation of \eqref{eq:liederivA1} in the right-hand side and multiply with $A$ from the right to obtain
$$\begin{array}{rr} &
\beta_{3}\Id+\left(\frac{1}{2}\beta_{3}\,\trace\,A^{-1}+(n+1)\beta_{4}\right)A\vspace{1mm}\\ =&-\frac{1}{2}(\det\,A)^{-1}(\mathcal{L}_{v}\det\,A)A+\beta_{1}A^{2}+\left(\frac{1}{2}\beta_{1}\,\trace\,A+(n+1)\beta_{2}\right)A-\mathcal{L}_{v}A.
\end{array}$$
Rearranging the terms in the last equation, we obtain 
\begin{align}
\mathcal{L}_{v}A=c_{2}A^{2}+c_{1}A+c_{0}\Id\label{eq:liederivA2}
\end{align}
for constants  $c_{2}=\beta_{1}$, $c_0= -\beta_3$,   and a certain  function $c_{1}$.

\begin{rem} Our way to obtain the equation  \eqref{eq:liederivA2} is based on an idea of Fubini from 
\cite{Fubini1} used in the theory of projective vector fields. 
\end{rem} 

Our next goal is  to show that $c_{2}=\beta_{1}\neq 0$. If $\beta_{1}=0$, the first equation of \eqref{eq:liederivA1} reads
\begin{align}
\mathcal{L}_{v}g=(n+1)\beta  g\nonumber
\end{align}
hence, $v$ is an infinitesimal homothety for $g$. This contradicts the assumption that $v$ is essential and we obtain that $c_{2}=\beta_{1}\neq 0$.\\
Now let us show that the function $c_{1}$ is a constant. Since $A$ is nondegenerate,  $c_1$ is a smooth function, so it is sufficient to show that its differential vanishes at every point of $M^0$. 
We will  work in a neighborhood of a point of $M^{0}$. Let $U\in E_{A}(\rho)$ be an eigenvector of $A$ with corresponding eigenvalue $\rho$. Using the Leibniz rule for the Lie derivative and the condition that $U\in E_{A}(\rho)$,  we obtain the equations
\begin{align}
\mathcal{L}_{v}(AU)=\mathcal{L}_{v}(\rho U)=v(\rho)U+\rho [v,U]\mbox{  and  }\mathcal{L}_{v}(AU)=(\mathcal{L}_{v}A)U+A([v,U]).\nonumber
\end{align}
Combining both equations and inserting $\mathcal{L}_{v}A$ from \eqref{eq:liederivA2}, we obtain
\begin{align}
(v(\rho)-c_{2}\rho^{2}-c_{1}\rho-c_{0})U=(A-\rho \Id) [v,U].\nonumber
\end{align}
In a basis of eigenvectors $\{U_i, JU_i\}$ of $A$ from the proof of  Theorem  \ref{thm:Mat}, we see that the right-hand side does not contain any component from   $E_{A}(\rho)$ (i.e., the right-hand side is a linear combination of  eigenvectors corresponding to other eigenvalues). Then,  
\begin{align}
c_{1}=v(ln(\rho))-c_{2}\rho-\frac{c_{0}}{\rho}\mbox{  and  }(A-\rho \Id)[v,U]=0.\label{eq:liederivA3}
\end{align}
These equations are true for all eigenvalues $\rho$ of $A$ and corresponding eigenvectors $U$. Note that $\rho\neq 0$ since $A$ is non-degenerate. By construction, the metric $\bar{g}$ (such that $A=A(g,\bar{g})$) is not affinely equivalent to $g$, in particular, $A$ has more than one eigenvalue. Let be $W\in E_{A}(\mu)$ and $\rho\neq\mu$. Applying $W$ to the first equation in \eqref{eq:liederivA3} and using Proposition \ref{prop:modulo}, we obtain
\begin{align}
W(c_{1})=[W,v](ln(\rho)).\nonumber
\end{align}
The second equation of \eqref{eq:liederivA3} shows that $[v,W]=0\mbox{ modulo }E_{A}(\mu)$. Hence,
\begin{align}
W(c_{1})=0.\nonumber
\end{align}
We obtain that $U(c_{1})=0$ for all eigenvectors $U$ of $A$.  Then, $dc_1\equiv 0$ on $M^0$.  Since $M^{0}$ is dense in $M$, we  obtain that $dc_1\equiv 0$ on the whole $M$  implying  $c_{1}$ is a constant. This completes the proof of Lemma \ref{lem:liederivA}.
\end{proof}
\begin{con} Since $c_{2}\neq 0$, we can replace $v$ by  the $h$-projective vector field $\frac{1}{c_{2}}v$. For simplicity, we denote the new vector field again by  $v$; this implies that equation \eqref{eq:liederivA} is now satisfied for $c_{2}=1$:  instead of \eqref{eq:liederivA} we have 
\begin{align}
\mathcal{L}_{v}A=A^{2}+c_{1}A+c_{0}\Id\label{eq:lie}
\end{align}
for some constants $c_{1},c_{0}$. 
\end{con}
\begin{rem}
\label{rem:Av}
Note that the constant $\beta_{1}$ in the proof of Lemma \ref{lem:liederivA} is equal to $c_{2}$. With the convention above, the first equation in \eqref{eq:liederivA4} now reads
\begin{align}
A_{v}=g^{-1}\circ\mathcal{L}_{v}g-\frac{\trace(g^{-1}\circ\mathcal{L}_{v}g)}{2(n+1)}\Id=A+\beta\Id\label{eq:Av}
\end{align}
for some $\beta\in\mathbb{R}$.
\end{rem}

\begin{rem}
In the proof of Lemma \ref{lem:liederivA}, we had to do some additional work to show that $c_{1}$ is indeed a constant. This problem does not appear if we use the $h$-projectively invariant formulation of equation \eqref{f}. We introduce this approach in Appendix \ref{app} where we also give an alternative proof of Lemma \ref{lem:liederivA}.
\end{rem}

\begin{lem}
\label{lem:spectrumD=2}
The tensor $A$ has precisely one non-constant eigenvalue $\rho$ of multiplicity $2$ and at least one and at 
most two constant eigenvalues (we denote the constant eigenvalues by $\rho_{1}<\rho_{2}$ and their multiplicities by $2k_{1}$ and $2k_{2}=2n-2k_{1}-2$ respectively; we allow $k_1$ to be equal to $0$ and $n-1$; if $k_1=0$, $A$ has only one constant eigenvalue $\rho_2$ and if $k_1=n-1$, than $A$ has 
only one constant eigenvalue $\rho_1$).    Moreover, the eigenvalues satisfy the equations 
\begin{align}
\begin{array}{c}
0=\rho_{1}^{2}+c_{1}\rho_{1}+c_{0}=\rho_{2}^{2}+c_{1}\rho_{2}+c_{0}\vspace{2mm}\\
v(\rho)=\rho^{2}+c_{1}\rho+c_{0}\end{array}\label{eq:vrho}
\end{align}
for the constants $c_{1},c_{0}$ from \eqref{eq:lie}. For every  point $x\in M^{0}$ such that $d\rho_{|x}\neq 0$ and $v(x)\neq0$, the evolution of the non-constant eigenvalue $\rho$ along the flow line $\Phi^{v}_{t}(x)$ is given by 
\begin{align}
\rho(t)=-\frac{c_{1}}{2}-\sqrt{\alpha}\,\tanh(\sqrt{\alpha}(t+d))\label{eq:odesol}, 
\end{align}
 where $\alpha=\tfrac{1}{4}c_{1}^{2}-c_{0}$ is necessarily a positive real number.\\
\end{lem}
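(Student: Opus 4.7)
The plan is to reduce Lemma \ref{lem:spectrumD=2} to the analysis of a simple scalar ODE that arises from \eqref{eq:lie}. First I would pick a smooth local field of eigenvectors $U\in E_A(\rho)$ on an open subset of $M^0$ and compute $\mathcal{L}_v(AU)$ in two ways, once as $v(\rho)U+\rho[v,U]$ and once as $(\mathcal{L}_vA)U+A[v,U]$, using \eqref{eq:lie} on the second expression. Subtracting yields
\[
(A-\rho\,\Id)[v,U] \;=\; \bigl(v(\rho)-\rho^2-c_1\rho-c_0\bigr)U.
\]
Since the left-hand side has no component in $E_A(\rho)$ while the right-hand side lies in $E_A(\rho)$, both must vanish. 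In particular, every eigenvalue satisfies $v(\rho)=\rho^2+c_1\rho+c_0$, so any constant eigenvalue $\rho_j$ is a root of $x^2+c_1x+c_0=0$; thus $A$ admits at most two distinct constant eigenvalues.

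Next I would rule out the case in which all eigenvalues of $A$ are constant. If this happened, then $\trace\,A$ would be constant on $M$, and by Remark \ref{rem:lambda2} the vector field $\Lambda=\tfrac14\grad\,\trace\,A$ would vanish identically; equation \eqref{f} would then force $\nabla A\equiv 0$ on $M$, and by Remark \ref{rem:lambda} the metrics $g$ and $\bar g$ would be affinely equivalent, contradicting our choice in Convention \ref{con:D=2}. Consequently at least one non-constant eigenvalue $\rho$ exists and satisfies the scalar ODE $\dot\rho=\rho^2+c_1\rho+c_0$ along every integral curve of $v$.

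The main obstacle is to upgrade this to $\alpha>0$ and to the existence of \emph{exactly one} non-constant eigenvalue. Since $M$ is closed, $v$ is complete and every eigenvalue of $A$ is bounded on $M$. The quadratic $\rho^2+c_1\rho+c_0$ has discriminant $-4\alpha$. If $\alpha\le 0$ this quadratic is non-negative, so $\dot\rho\ge 0$ on every orbit; an explicit integration shows that a non-constant solution would then blow up in finite time (if $\alpha<0$) or tend to infinity (if $\alpha=0$), contradicting boundedness. Hence $\alpha>0$, and the fixed points of the ODE are $\rho_\pm=-\tfrac{c_1}{2}\pm\sqrt\alpha$. Every bounded non-constant solution is trapped in the interval $(\rho_-,\rho_+)$, satisfies $\dot\rho<0$ there, and by completeness of $v$ tends to $\rho_-$ as $t\to+\infty$ and to $\rho_+$ as $t\to-\infty$. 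If two non-constant eigenvalues $\mu_i<\mu_{i+1}$ coexisted, these asymptotics would give $\sup_M\mu_i=\rho_+$ and $\inf_M\mu_{i+1}=\rho_-$, contradicting the global inequality $\sup_M\mu_i\le\inf_M\mu_{i+1}$ from Theorem \ref{thm:Mat}. Thus the non-constant eigenvalue is unique, and by Corollary \ref{cor:mult} its multiplicity is exactly two; the remaining eigenvalues are constant roots of $x^2+c_1x+c_0=0$, giving the described spectrum.

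To finish, I would integrate the ODE: the substitution $\sigma=\rho+c_1/2$ turns it into $\dot\sigma=\sigma^2-\alpha$, and one directly verifies that $\sigma(t)=-\sqrt\alpha\,\tanh(\sqrt\alpha(t+d))$ is the (unique) bounded solution, producing the formula \eqref{eq:odesol} on the orbit through any point where $d\rho\neq 0$ and $v\neq 0$.
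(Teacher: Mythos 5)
Your derivation of the two identities in \eqref{eq:liebracket}, the exclusion of the all-constant-spectrum case via $\Lambda=\tfrac14\grad\trace A$, the uniqueness of the non-constant eigenvalue via Theorem \ref{thm:Mat}(1), and the phase-line analysis of $\dot\rho=\rho^2+c_1\rho+c_0$ all follow the paper's route. However, there is one genuine gap: every step of your ODE analysis (blow-up for $\alpha\le 0$, trapping in $(\rho_-,\rho_+)$ and the asymptotics $\rho\to\rho_\mp$ as $t\to\pm\infty$ for $\alpha>0$) tacitly assumes that $\rho(t)=\rho(\Phi^v_t(x))$ satisfies the ODE for \emph{all} $t\in\mathbb{R}$. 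But the ODE is derived from a smooth local eigenvector field $U\in E_A(\rho)$, which is only available on the set $M^0$ of typical points; off $M^0$ the eigenvalues may collide and $\rho$ need not even be differentiable along the orbit. Boundedness of the eigenvalues on the closed manifold only contradicts the explicit solution formulas if the orbit stays in $M^0$ long enough for the blow-up (or for the full asymptotic range $(\rho_-,\rho_+)$) to be realized; if the orbit exited $M^0$ first, no contradiction would arise. This is exactly why the paper inserts Lemma \ref{lem:htrafodegree}, which uses the hypothesis $D(g)=2$ to show that $h$-projective transformations, hence the flow $\Phi^v_t$, map $M^0$ onto $M^0$; note that your argument never uses $D(g)=2$ inside the proof, which is a symptom of the omission. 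You need to add this flow-invariance of $M^0$ (or an equivalent substitute) before integrating the ODE globally in $t$.

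Two minor slips that do not affect the logic: the discriminant of $x^2+c_1x+c_0$ is $4\alpha$, not $-4\alpha$ (your conclusion that the quadratic is nonnegative when $\alpha\le 0$ is nevertheless correct), and in the case $\alpha=0$ the non-constant solution $-\tfrac{c_1}{2}-\tfrac{1}{t+d}$ in fact reaches infinity in \emph{finite} time, which only strengthens the contradiction you invoke.
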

\begin{figure}
  \includegraphics[width=.5\textwidth]{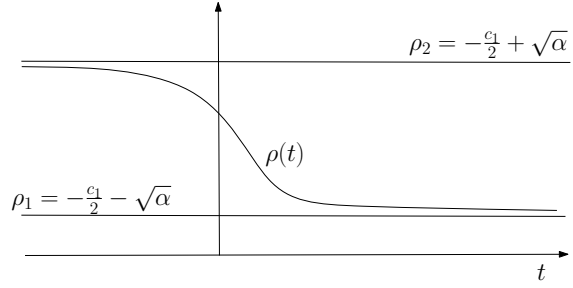}
  \caption{The behavior of the restriction of the eigenvalues to the integral curve of $v$: at most 
  two eigenvalues, $\rho_1$ and $\rho_2$,  
   are constant; they are roots  of the quadratic polynomial $X^{2}+c_{1}X+c_{0}$. Precisely one eigenvalue, $\rho$,   is not constant along the integral curve  and is given by   \eqref{eq:odesol}.  }\label{picrho}
\end{figure}

\begin{proof}
We proceed as in the proof of Lemma \ref{lem:liederivA}. Applying equation \eqref{eq:lie} to an eigenvector $U$ of $A$, corresponding to the eigenvalue $\rho$ yields
\begin{align}
(\rho^{2}+c_{1}\rho+c_{0}-v(\rho))U=-(A-\rho \Id)\lbrack v,U\rbrack.\nonumber
\end{align}
Since the right-hand side does not contain any components lying in $E_{A}(\rho)$, we obtain that 
\begin{align}
(A-\rho \Id)\lbrack v,U\rbrack=0\mbox{ and }v(\rho)=\rho^{2}+c_{1}\rho+c_{0}\label{eq:liebracket}
\end{align}
for all eigenvalues $\rho$ of $A$ and all eigenvectors $U\in E_{A}(\rho)$.

In particular, each constant eigenvalue is a solution of the equation $\rho^{2}+c_{1}\rho+c_{0}=0$. This implies that there are at most two different constant eigenvalues $\rho_{1}$ and $\rho_{2}$ for $A$ as  we claimed.

On the other hand, let $\rho$ be a non-constant eigenvalue of $A$ (there is always a non-constant eigenvalue since otherwise, the vector field $\Lambda$ vanishes identically on $M$ and therefore, the metrics $g$ and $\bar{g}$ (such that $A=A(g,\bar{g})$) are already affinely equivalent, see Remark \ref{rem:lambda}) and let $x\in M^{0}$ be a point such that $d\rho_{|x}\neq 0$ and $v(x)\neq 0$. The second equation in \eqref{eq:liebracket} shows that the restriction of $\rho$ to the flow line $\Phi^{v}_{t}(x)$ of $v$ (i.e., $\rho(t):=\rho( \Phi^{v}_{t}(x)$) satisfies the ordinary differential equation 
\begin{align}
\dot\rho=\rho^{2}+c_{1}\rho+c_{0}, \ \ \textrm{where $\dot\rho$ stays for $\tfrac{d}{dt}\rho$} .\label{eq:ode}
\end{align}
This ODE can be solved explicitly; the solution (depending on the parameters $c_1$, $c_0$) is given by the following list. We put    $\alpha=\frac{c_{1}^{2}}{4}-c_{0}$.
\begin{itemize}
\item For $\alpha<0$, the non-constant solutions of equation \eqref{eq:ode} are of the form
\begin{align}
-\frac{c_{1}}{2}-\sqrt{-\alpha}\,\mbox{tan}(\sqrt{-\alpha}(-t+d)).\nonumber
\end{align}
\item For $\alpha>0$, the non-constant solutions of equation \eqref{eq:ode} take the form
\begin{align}
-\frac{c_{1}}{2}-\sqrt{\alpha}\,\mbox{tanh}(\sqrt{\alpha}(t+d))\mbox{ or }-\frac{c_{1}}{2}-\sqrt{\alpha}\,\mbox{coth}(\sqrt{\alpha}(t+d)).\nonumber
\end{align}
\item For $\alpha=0$, the non-constant solutions of equation \eqref{eq:ode} are given by
\begin{align}
-\frac{c_{1}}{2}-\frac{1}{t+d}.\nonumber
\end{align}
\end{itemize}
Since the degree of mobility is equal to $2$, we can apply Lemma \ref{lem:htrafodegree} to obtain that the flow $\Phi^{v}_{t}$ maps $M^{0}$ onto $M^{0}$. It follows that $\rho(t)$ satisfies equation \eqref{eq:ode} for all $t\in\mathbb{R}$. However, the only solution of \eqref{eq:ode} which does not reach infinity in finite time is 
\begin{align}
-\frac{c_{1}}{2}-\sqrt{\alpha}\,\mbox{tanh}(\sqrt{\alpha}(t+d))\nonumber,
\end{align}
 where $\alpha=\frac{c_{1}^{2}}{4}-c_{0}$ is necessarily a positive real number.\\
We obtain that the non-constant eigenvalues of $A$ satisfy equation \eqref{eq:odesol}, in particular, their images contain the open interval $(-\frac{c_{1}}{2}-\sqrt{\alpha},-\frac{c_{1}}{2}+\sqrt{\alpha})$. Suppose that there are two different non-constant eigenvalues $\rho= -\tfrac{c_{1}}{2}-\sqrt{\alpha}\,\mbox{tanh}(\sqrt{\alpha}(t+d))$ and $\tilde\rho=-\frac{c_{1}}{2}-\sqrt{\alpha}\,\mbox{tanh}(\sqrt{\alpha}(t+\tilde d))$ of $A$. Then we can find points $x_{0},x_{1},x_{2}\in M$ such that $\rho(x_{0})<\tilde \rho(x_{1})<\rho(x_{2})$. This contradicts the global ordering of the eigenvalues of $A$, see Theorem \ref{thm:Mat}(1). It follows that $A$ has precisely  one non-constant eigenvalue $\rho$. This eigenvalue restricted to flow lines of $v$ satisfies equation \eqref{eq:odesol}. By Corollary \ref{cor:mult}, the multiplicity of $\rho$ is equal to two. We obtain that there must be at least one constant eigenvalue of $A$. Finally, Lemma \ref{lem:spectrumD=2} is proven.
\end{proof}

\begin{cor} In the notation above, all eigenvalues $\rho_1, \rho, \rho_2$ are smooth functions on the manifold.    
\end{cor}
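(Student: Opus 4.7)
The constants $\rho_1$ and $\rho_2$ are trivially smooth as constant functions on $M$, so the substance of the corollary is the smoothness of the non-constant eigenvalue $\rho$. My plan is to exhibit $\rho$ explicitly as a smooth function of $\trace(A)$ on the entire manifold, circumventing the usual difficulty that eigenvalues of smooth tensors can fail to be smooth at collision points.

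By Corollary \ref{cor:m0isdense}, the set $M^0$ of typical points of $A$ is open and dense in $M$. On $M^0$, Lemma \ref{lem:spectrumD=2} gives the complete spectral picture: the eigenvalues of $A$ are $\rho_1$ with multiplicity $2k_1$, $\rho_2$ with multiplicity $2k_2$, and $\rho$ with multiplicity $2$. Taking traces yields, on $M^0$,
\begin{equation*}
\trace(A) = 2k_1\rho_1 + 2k_2\rho_2 + 2\rho,
\end{equation*}
which is equivalent to $\rho = \tfrac{1}{2}\trace(A) - k_1\rho_1 - k_2\rho_2$.

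I would therefore \emph{define} $\rho$ on all of $M$ by the formula $\rho := \tfrac{1}{2}\trace(A) - k_1\rho_1 - k_2\rho_2$. Since $A$ is smooth and $\rho_1,\rho_2,k_1,k_2$ are constants, this is manifestly smooth on $M$, and by construction it coincides with the non-constant eigenvalue on the dense subset $M^0$, so it is the unique continuous extension.

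The only consistency check is that this extension still represents an eigenvalue of $A$ at points of $M\setminus M^0$. At such a point $x$, the number $m(x)$ of distinct eigenvalues is strictly smaller than on $M^0$, so by continuity of eigenvalues the non-constant eigenvalue must coincide at $x$ with one of the constants, say with $\rho_1$; then $\rho_1$ acquires multiplicity $2k_1+2$ at $x$ and $\trace(A)(x) = (2k_1+2)\rho_1 + 2k_2\rho_2$, so the formula gives $\rho(x) = \rho_1$, consistent with the collision (the case of a collision with $\rho_2$ is symmetric). I do not foresee any genuine obstacle: the whole argument reduces to the elementary observation that on $M^0$ the single non-constant eigenvalue of $A$ is a constant-coefficient affine combination of $\trace(A)$ and the known constants $\rho_1,\rho_2$, and this formula automatically extends smoothly across the collision locus $M\setminus M^0$.
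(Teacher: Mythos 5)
Your proposal is correct and follows essentially the same route as the paper: the paper's proof is precisely the observation that $\rho=\tfrac{1}{2}\trace A-k_1\rho_1-(n-1-k_1)\rho_2$, which is smooth since $\trace A$ is. Your additional consistency check at the collision locus is a harmless elaboration of the same idea.
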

\begin{proof}  The eigenvalues $\rho_1, \rho_2$ are constant and are therefore smooth. The non-constant eigenvalue $\rho$ is equal to $\tfrac{1}{2}\trace\,{A} - k_1 \rho_1 - (n-1-k_1) \rho_2$ and is therefore also smooth.
\end{proof}

\begin{lem}
\label{lem:onenonconstant}
Let $A$  have only one non-constant  eigenvalue denoted by $\rho$.
 On $M_{d\rho\neq0}:=\{x\in M:d\rho_{|x}\neq 0\}$, the vector fields $\Lambda$ and $\bar{\Lambda}$ are eigenvectors of $A$ corresponding to the eigenvalue $\rho$,  i.e., $E_{A}(\rho)=\spann\{\Lambda,\bar{\Lambda}\}$.\\
Moreover, $M_{d\rho\neq0}$ is open and dense in $M$ and $\Lambda(\rho)\neq 0$ on $M_{d\rho\neq0}$.  
\end{lem}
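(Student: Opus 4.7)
The plan is to identify the vector field $\Lambda$ explicitly with (half of) $\grad\rho$, and then combine this with Proposition \ref{prop:modulo} and the commutation $A\circ J = J\circ A$ to force both $\Lambda$ and $\bar\Lambda$ into the eigenspace $E_A(\rho)$. From there, the eigenspace identification and the non-vanishing of $\Lambda(\rho)$ on $M_{d\rho\neq0}$ become essentially algebraic, while density of $M_{d\rho\neq0}$ will follow from the Killing property of $\bar\Lambda$ established in Lemma \ref{lem:Killing}.

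First I would use the spectrum description from Lemma \ref{lem:spectrumD=2}: since $\rho$ is the only non-constant eigenvalue (with multiplicity $2$), one has $\trace A = 2k_1\rho_1 + 2\rho + 2k_2\rho_2$ with $\rho_1,\rho_2$ constant, hence $d(\tfrac{1}{4}\trace A) = \tfrac{1}{2}\,d\rho$. Combined with Remark \ref{rem:lambda2}, which says $\Lambda = \grad(\tfrac{1}{4}\trace A)$, this yields the identity $\Lambda = \tfrac{1}{2}\grad\rho$. By the last statement of Proposition \ref{prop:modulo}, $\grad\rho \in E_A(\rho)$, and therefore so does $\Lambda$; applying $J$ and using that $A\circ J = J\circ A$ (so $J$ preserves $E_A(\rho)$), we also obtain $\bar\Lambda = J\Lambda \in E_A(\rho)$. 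At any point of $M_{d\rho\neq0}$ the vector $\Lambda$ is non-zero, and $\bar\Lambda = J\Lambda$ is $g$-orthogonal to $\Lambda$ (because $g$ is $J$-invariant), hence linearly independent from it; since $\dim_{\mathbb{R}} E_A(\rho) = 2$, this forces $E_A(\rho) = \spann\{\Lambda,\bar\Lambda\}$, as required.

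The set $M_{d\rho\neq0}$ is open by smoothness of $\rho$. Density is the step where I expect the main subtlety, and the trick is to convert a hypothetical open set where $d\rho$ vanishes into a global contradiction via the Killing property. Suppose $d\rho \equiv 0$ on some non-empty open $U\subseteq M$. Then $\Lambda = \tfrac{1}{2}\grad\rho \equiv 0$ on $U$, so also $\bar\Lambda = J\Lambda \equiv 0$ on $U$. By Lemma \ref{lem:Killing}, $\bar\Lambda$ is a Killing vector field for the Riemannian metric $g$, and a Killing field on a connected manifold that vanishes on a non-empty open set must vanish identically. Therefore $\bar\Lambda \equiv 0$ on $M$, and then $\Lambda = -J\bar\Lambda$ also vanishes identically on $M$. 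This means $\grad\rho \equiv 0$ on $M$, so $\rho$ is globally constant, contradicting the fact from Lemma \ref{lem:spectrumD=2} that $\rho$ is the (unique) non-constant eigenvalue of $A$.

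Finally, using once more $\Lambda = \tfrac{1}{2}\grad\rho$, we compute $\Lambda(\rho) = g(\Lambda,\grad\rho) = \tfrac{1}{2}|\grad\rho|^2$, which is strictly positive at every point of $M_{d\rho\neq0}$ because $g$ is positive-definite. The conceptual heart of the argument is thus the identification $\Lambda = \tfrac{1}{2}\grad\rho$, which reduces everything to known facts; the density step is the only place requiring more than bookkeeping, and is handled in one line by invoking Lemma \ref{lem:Killing}.
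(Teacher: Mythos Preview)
Your proof is correct and follows essentially the same route as the paper's: identify $\Lambda = \tfrac{1}{2}\grad\rho$ via the trace formula, use Proposition~\ref{prop:modulo} and $J$-invariance of the eigenspaces to conclude $E_A(\rho) = \spann\{\Lambda,\bar\Lambda\}$, and derive density of $M_{d\rho\neq 0}$ from the fact that its complement is the zero set of the Killing field $\bar\Lambda$ (Lemma~\ref{lem:Killing}). Your version is slightly more explicit in a couple of places (linear independence of $\Lambda,\bar\Lambda$ via orthogonality, and the computation $\Lambda(\rho) = \tfrac{1}{2}|\grad\rho|^2$), but the argument is the same.
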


\begin{rem}
Note that the second part of the assertion above is still true even locally and even if there are more than just one non-constant eigenvalue. The proof is based on the existence of a  family of Killing vector fields (one for each non-constant eigenvalue) and is given in \cite[Proposition 14]{ApostolovI}. 
\end{rem}

\begin{proof}
First of all, since $\rho$ is the only non-constant eigenvalue of $A$ and $\rho$ has multiplicity equal to $2$ (see Corollary \ref{cor:mult}), we obtain $\Lambda=\frac{1}{4}\grad\,\trace\,A=\frac{1}{2}\grad\,\rho$.

By Proposition \ref{prop:modulo}, \, $\Lambda$ is an eigenvector of $A$ corresponding to the eigenvalue $\rho$. Since the eigenspaces of $A$ are invariant with respect to the complex structure $J$, we immediately obtain $E_{A}(\rho)=\spann\{\Lambda,\bar{\Lambda}\}$. Moreover, since 
 $\grad\,\rho$ is proportional to $\Lambda$,  we have $\bar{\Lambda}(\rho)=0$ and  $\Lambda(\rho)\neq 0$ on $M_{d\rho\neq0}$.
 
Obviously, $M_{d\rho\neq0}$ is an open subset of $M$. As we explained above,   $d\rho_{|x}=0$, if and only if $\Lambda(x)=\bar{\Lambda}(x)=0$. Then, $M\setminus M_{d\rho\neq0}$ coincides with the set of zeros of the non-trivial Killing vector field $\bar{\Lambda}$. We obtain that $M_{d\rho\neq0}$ is dense in $M$.
\end{proof}
Let us now consider the critical points of the non-constant eigenvalue $\rho$:
\begin{lem}
\label{lem:minmax}
At every $x$ such that  $d\rho_{|x}=0$,  $\rho$ takes its maximum or minimum values $\rho=-\frac{c_{1}}{2}\pm\sqrt{\alpha}$, where $\alpha=\frac{c_{1}^{2}}{4}-c_{0}$ and $c_{1},c_{0}$ are the constants from the  equation \eqref{eq:lie}. Moreover, $v\neq 0$ on $M_{d\rho\neq0}$. 
\end{lem}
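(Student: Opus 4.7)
The plan is to exploit the identity $v(\rho)=\rho^{2}+c_{1}\rho+c_{0}$ from Lemma \ref{lem:spectrumD=2} together with $\alpha=\tfrac{c_{1}^{2}}{4}-c_{0}>0$, which was already established there. The crucial observation is that this identity links the vanishing of $v(\rho)$ to $\rho$ being a root of the quadratic $X^{2}+c_{1}X+c_{0}$, whose roots are precisely $-\tfrac{c_{1}}{2}\pm\sqrt{\alpha}$.

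First I would verify the values at critical points. At any $x$ with $d\rho_{|x}=0$, the identity gives $0=(d\rho)_{x}(v(x))=v(\rho)(x)=\rho(x)^{2}+c_{1}\rho(x)+c_{0}$, so $\rho(x)\in\{-\tfrac{c_{1}}{2}-\sqrt{\alpha},\,-\tfrac{c_{1}}{2}+\sqrt{\alpha}\}$. Next I would argue that these are exactly the global extrema of $\rho$ on $M$: since $M$ is closed (compact), $\rho$ attains both its maximum and minimum on $M$, and at any such extremal point $d\rho$ vanishes (the manifold has no boundary). By the previous sentence the extremal values must be among $-\tfrac{c_{1}}{2}\pm\sqrt{\alpha}$, and since $\rho$ is non-constant by Lemma \ref{lem:spectrumD=2} the two extrema are distinct, so necessarily $\max_{M}\rho=-\tfrac{c_{1}}{2}+\sqrt{\alpha}$ and $\min_{M}\rho=-\tfrac{c_{1}}{2}-\sqrt{\alpha}$.

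Finally, for the statement that $v\neq 0$ on $M_{d\rho\neq 0}$, I would argue by contradiction. Suppose $v(x_{0})=0$ for some $x_{0}\in M_{d\rho\neq 0}$. Then $v(\rho)(x_{0})=0$ trivially, so the same identity forces $\rho(x_{0})\in\{-\tfrac{c_{1}}{2}\pm\sqrt{\alpha}\}$. By the previous paragraph this is either the global maximum or global minimum value of $\rho$ on $M$, so $x_{0}$ is a global extremum of the smooth function $\rho$, and hence $d\rho_{|x_{0}}=0$. This contradicts $x_{0}\in M_{d\rho\neq 0}$ and completes the proof.

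There is no real obstacle here; the statement reduces to the interplay between the pointwise identity $v(\rho)=\rho^{2}+c_{1}\rho+c_{0}$, compactness of $M$, and elementary calculus on a closed manifold (a smooth function attains its extrema at interior critical points). The content of the lemma is essentially extracting these consequences from the ODE analysis already carried out in Lemma \ref{lem:spectrumD=2}.
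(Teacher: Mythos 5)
Your proof is correct, and the key step is the same as in the paper: at a critical point of $\rho$ (or at a zero of $v$) the identity $v(\rho)=\rho^{2}+c_{1}\rho+c_{0}$ from \eqref{eq:vrho} forces $\rho$ to take one of the two values $-\tfrac{c_{1}}{2}\pm\sqrt{\alpha}$. Where you diverge from the paper is in identifying these two roots with the extremal values of $\rho$. The paper does this by invoking the explicit solution \eqref{eq:odesol} along the flow lines of $v$ on the open dense set $M_{v\neq0}\cap M_{d\rho\neq0}$, which gives the strict bounds $-\tfrac{c_{1}}{2}-\sqrt{\alpha}<\rho<-\tfrac{c_{1}}{2}+\sqrt{\alpha}$ there, and then passes to the closure by density and continuity. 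You instead argue purely by compactness: $\rho$ is smooth and non-constant on the closed manifold $M$, so it attains a maximum and a minimum at interior critical points, whose values must be among the two (distinct, since $\alpha>0$) roots; non-constancy then pins down which root is the maximum and which the minimum. Your route is a bit more self-contained, since it uses only the quadratic identity \eqref{eq:vrho} and the positivity of $\alpha$ rather than the closed-form solution of the ODE, and it yields slightly more (the bounds are actually attained). One small point worth making explicit in either version: the identity \eqref{eq:vrho} is derived on the dense set $M^{0}$ of typical points, and is extended to all of $M$ by continuity of both sides, since $\rho$ is smooth on the whole manifold; both you and the paper use it at points that need not lie in $M^{0}$.
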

\begin{proof}
Since the subsets $M_{v\neq0}$ and $M_{d\rho\neq0}$ are both open and dense in $M$ (see Corollary \ref{cor:htrafo2} and Lemma \ref{lem:onenonconstant}), we obtain that $M^{1}=M_{v\neq0}\cap M_{d\rho\neq0}$ is open and dense in $M$ as well. Equation \eqref{eq:odesol} shows that $-\frac{c_{1}}{2}-\sqrt{\alpha}<\rho(x)< -\frac{c_{1}}{2}+\sqrt{\alpha}$ for all $x\in M^{1}$. Since $M^{1}$ is dense, we obtain
\begin{align}
-\frac{c_{1}}{2}-\sqrt{\alpha}\leq\rho(x)\leq -\frac{c_{1}}{2}+\sqrt{\alpha}\nonumber
\end{align}
for all $x\in M$. Now suppose that $d\rho_{|x}=0$ for some $x\in M$. It follows from equation \eqref{eq:vrho} that $\rho(x)$ satisfies $0=d\rho_{|x}(v)=\rho(x)^{2}+c_{1}\rho(x)+c_{0}$, hence, $\rho(x)$ is equal to the maximum or minimum value of $\rho$. Now suppose $v(x)=0$. By  \eqref{eq:vrho},   $\rho$ takes its maximum or minimum value at $x$. It follows that $d\rho_{|x}=0$.
\end{proof}

\subsection{Metric components on integral manifolds of $\spann\{\Lambda,\bar{\Lambda}\}$}
\label{sec:metricinadaptedcoord}
By  Lemma \ref{lem:spectrumD=2},  $A$ has precisely  one non-constant eigenvalue $\rho$ and at most two constant eigenvalues $\rho_{1}$ and $\rho_{2}$.
The goal of this section is to calculate the components of the restriction of the metric $g$ to the integral manifolds of the eigenspace distribution $E_{A}(\rho)=\spann\{\Lambda,\bar{\Lambda}\}$. In order to do it, we split the tangent bundle on $M_{d\rho\neq0}$ into the direct product of   two distributions: 
\begin{align}
D_{1}:=\spann\{\Lambda\}\mbox{ and }D_{2}:=D_{1}^{\perp}=\spann\{\bar{\Lambda}\}\oplus E_{A}(\rho_{1})\oplus E_{A}(\rho_{2})\nonumber
\end{align}
 First let us show
\begin{lem}
\label{lem:integrable}
The distributions $D_{1}$, $D_{2}$ and $E_{A}(\rho)$ are integrable on $M_{d\rho\neq0}$. Moreover, integral manifolds of $D_{1}$ and $E_{A}(\rho)$ are totally geodesic. 
\end{lem}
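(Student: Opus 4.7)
The plan is to handle the three distributions in turn, exploiting that by Lemma~\ref{lem:onenonconstant} we have $\Lambda = \tfrac12\grad\rho$ and $E_A(\rho)=\spann\{\Lambda,\bar\Lambda\}$ on $M_{d\rho\neq0}$, together with the facts $\bar\Lambda(\rho)=0$, $g(\Lambda,\bar\Lambda)=0$, $g(\bar\Lambda,\bar\Lambda)=g(\Lambda,\Lambda)$ (from $J$ being a $g$-isometry with $J^{2}=-\Id$), and $\Lambda(\rho)=2g(\Lambda,\Lambda)$ (from $\Lambda=\tfrac12\grad\rho$).

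For $E_A(\rho)$: integrability is immediate since Corollary~\ref{cor:com} gives $[\Lambda,\bar\Lambda]=0$. To check that integral manifolds are totally geodesic, I would apply Proposition~\ref{prop:modulo} to each of the four pairs $(U,X)\in\{\Lambda,\bar\Lambda\}\times\{\Lambda,\bar\Lambda\}$ (with $\rho$ being the eigenvalue). In each case the right-hand side of \eqref{eq:modulo} simplifies, using the identities just listed, to a vector in $\spann\{\Lambda,\bar\Lambda\}$ whose coefficient vanishes (for $X=U=\Lambda$ the coefficient of $\Lambda$ becomes $\Lambda(\rho)-2g(\Lambda,\Lambda)=0$; the other three cases are analogous). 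Hence $(A-\rho\Id)\nabla_X Y=0$ for all $X,Y\in\spann\{\Lambda,\bar\Lambda\}$, so $\nabla_X Y\in E_A(\rho)$ and $E_A(\rho)$ is totally geodesic.

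For $D_1=\spann\{\Lambda\}$: integrability is automatic since $\dim D_1=1$. For totally geodesicity I need $\nabla_\Lambda\Lambda\in\spann\{\Lambda\}$. By the previous paragraph, $\nabla_\Lambda\Lambda=a\Lambda+b\bar\Lambda$, so it suffices to show $g(\nabla_\Lambda\Lambda,\bar\Lambda)=0$. Since $\Lambda=\tfrac12\grad\rho$, the tensor $\nabla\Lambda=\tfrac12\Hess\rho$ is $g$-symmetric, hence
\begin{equation*}
g(\nabla_\Lambda\Lambda,\bar\Lambda)=g(\Lambda,\nabla_{\bar\Lambda}\Lambda)=g(\Lambda,\nabla_\Lambda\bar\Lambda),
\end{equation*}
where the last equality uses $[\Lambda,\bar\Lambda]=0$. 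By the Killing equation for $\bar\Lambda$ (Lemma~\ref{lem:Killing}) with $X=Y=\Lambda$, the right-hand side vanishes. Thus $b=0$ (note $\bar\Lambda\ne0$ on $M_{d\rho\ne0}$), so $D_1$ is totally geodesic.

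For $D_2=D_1^{\perp}$: on $M_{d\rho\neq0}$ we have $D_2=\ker d\rho$, and since $d\rho$ is exact, $d(d\rho)=0$ gives $d\rho\wedge d(d\rho)=0$, so Frobenius' theorem yields integrability; the integral manifolds are connected components of the level sets of $\rho$. The only routine check is that $D_2$ really coincides with $\ker d\rho$, which follows from $\Lambda=\tfrac12\grad\rho$ spanning $D_1$. The main work, and the only place where something nontrivial happens, is the chain of identities making all four right-hand sides in Proposition~\ref{prop:modulo} vanish in step one; everything else is then a short deduction.
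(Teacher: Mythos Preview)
Your argument is correct. For $E_A(\rho)$ totally geodesic, the explicit vanishing of all four right-hand sides is even a bit more than needed: once $X,U\in E_A(\rho)$, every term on the right of \eqref{eq:modulo} lies in $E_A(\rho)$ (since $\Lambda,\bar\Lambda,X,JX,U$ all do), while the left-hand side $(A-\rho\Id)\nabla_X U$ lies in $E_A(\rho)^\perp$; hence both sides are automatically zero. Your verification is a useful sanity check but not strictly required. The argument for $D_1$ via the Hessian symmetry and the Killing equation is clean and correct.

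The paper takes a genuinely different route for the totally geodesic claims. Instead of the pointwise algebra of Proposition~\ref{prop:modulo}, it uses the quadratic integrals $I_0(\zeta)=g(\bar\Lambda,\zeta)^2$ and $I_1,I_2$ obtained by differentiating the family $F_t$ at the constant eigenvalues $\rho_1,\rho_2$. The nullities of these integrals are computed to satisfy $E_A(\rho)=\mathbf{null}\,I_1\cap\mathbf{null}\,I_2$ and $D_1=\mathbf{null}\,I_0\cap\mathbf{null}\,I_1\cap\mathbf{null}\,I_2$, and since each $I_j$ is constant along geodesics, any geodesic initially tangent to one of these intersections stays tangent to it. Your approach is more self-contained and elementary, avoiding the integrals altogether; the paper's approach is more conceptual and, crucially, sets up the integrals $I_0,I_1,I_2$ and the nullity descriptions that are reused in the proofs of Lemma~\ref{lem:metricinadaptedcoord} (to show $g_{11}$ depends on $x^1$ only) and Lemma~\ref{lem:metriccomp} (to force a geodesic joining extremal points of $\rho$ to be tangent to $D_1$). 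So while your proof stands on its own, the paper's version does double duty.
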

\begin{proof}
Since $\Lambda$ is a gradient, the distribution $D_{2}$ is integrable. On the other hand, Corollary \ref{cor:com} immediately implies  that $E_{A}(\rho)$ is integrable. The distribution $D_1$ is one-dimensional and is therefore integrable. In order to show that the integral manifolds of $D_{1}$ and $E_{A}(\rho)$ are totally geodesic, we  consider  the (quadratic in velocities)  integrals $I_{0},I_{1},I_{2}:TM\rightarrow \mathbb{R}$ given by
\begin{align}
\begin{array}{c}
I_{0}(\zeta)=g(\bar{\Lambda},\zeta)^{2},I_{1}(\zeta)=\left(\frac{d^{k_{1}-1}}{dt^{k_{1}-1}}F_{t}(\zeta)\right)|_{t=\rho_{1}}\mbox{ and }I_{2}(\zeta)=\left(\frac{d^{k_{2}-1}}{dt^{k_{2}-1}}F_{t}(\zeta)\right)|_{t=\rho_{2}}
\end{array}\label{eq:int3},
\end{align}
 where $2k_{1},2k_{2}$ are the multiplicities of the constant eigenvalues $\rho_{1},\rho_{2}$ of $A$.\\
If $s:TM\rightarrow \mathbb{R}$ is a quadratic polynomial in the velocities, we define the {\it nullity} 
 of $s$ by  $$\mbox{\bf null}\,s:=\{\zeta\in TM:s(\zeta)=0\}.$$ In the   orthonormal frame of eigenvectors of $A$ from the proof of Theorem \ref{thm:Mat},  the integrals $F_t$ are given by \eqref{eq:int2expl}, 
  and  it is easy to see that $$\mbox{\bf null}I_{1}=E_{A}(\rho)\oplus E_{A}(\rho_{2}),\  \mbox{\bf null}I_{2}=E_{A}(\rho)\oplus E_{A}(\rho_{1}) \textrm{ and } \mbox{\bf null}I_{0}=\spann\{\Lambda\}\oplus E_{A}(\rho_{1})\oplus E_{A}(\rho_{2}).$$ It follows that $D_{1}=\mbox{\bf null}I_{0}\cap\mbox{\bf null}I_{1}\cap\mbox{\bf null}I_{2}$ and $E_{A}(\rho)=\mbox{\bf null}I_{1}\cap\mbox{\bf null}I_{2}$.  Since the functions are integrals, if  $ \dot\gamma(0) \in \mbox{\bf null}I_{i}$ , then   $\dot\gamma(t) \in \mbox{\bf null}I_{i}$ for all $t$. Then,  
 every  geodesic $\gamma$
   such that $\dot\gamma(0)\in D_{1}$ (resp. $E_{A}(\rho)$)  remains  tangent to $D_{1}$ (resp. $E_{A}(\rho)$). Thus,  the integral manifolds of $D_{1}$ and $E_{A}(\rho)$ are totally geodesic.
\end{proof}
Let us introduce local coordinates $x^{1},x^{2},...,x^{2n}$ in a neighborhood of a point of $M_{d\rho\neq0}$ such that (for all constants $C_{1},...,C_{2n}$)
 the equation $x^{1}=C_{1}$ defines an integral manifold of $D_{2}$ and  the system $\{x^{i}=C_i\}_{i=2,...,2n}$  defines an integral manifold of $D_{1}$. In these coordinates, the metric $g$  has  the block-diagonal form
\begin{align}
g=g_{11}dx^{1}\otimes dx^{1}+\sum_{i,j=2}^{2n}\tilde{g}_{ij}dx^{i}\otimes dx^{j}.\nonumber
\end{align}
In what follows we  call  such coordinates  \textit{adapted to the decomposition} $TM_{|M_{d\rho\neq0}}=D_{1}\oplus D_{2}$. Let us show that the $h$-projective vector field $v$ splits  into two independent components with respect to this decomposition:
\begin{lem}
\label{lem:decomp}
In the coordinates $x^{1},x^{2},...,x^{2n}$ adapted to the decomposition $TM_{|M_{d\rho\neq0}}=D_{1}\oplus D_{2}$, the $h$-projective vector field $v$ is given by
\begin{equation}\label{25} 
v=\underbrace{v^{1}(x^{1})\partial_{1}}_{=:v_{1}\in D_{1}}+\underbrace{v^{2}(x^{2},...,x^{2n})\partial_{2}+...+v^{2n}(x^{2},...,x^{2n})\partial_{2n}}_{=:v_{2}\in D_{2}}
\end{equation}
\end{lem}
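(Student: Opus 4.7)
The plan is to prove the splitting in two independent steps. First I will show that $v^1$ depends only on $x^1$; second, that each $v^j$ for $j\ge 2$ is independent of $x^1$. Together these give the claimed form \eqref{25}.

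\textbf{First step.} Since $D_2=\Lambda^\perp$ and $\Lambda=\tfrac{1}{2}\grad\rho$ by Lemma~\ref{lem:onenonconstant}, the integral manifolds of $D_2$ coincide with the level sets of $\rho$. By construction they also coincide with the level sets of $x^1$, so $x^1=\phi(\rho)$ for some function $\phi$ with $\phi'\ne 0$. The evolution equation $v(\rho)=\rho^2+c_1\rho+c_0$ from Lemma~\ref{lem:spectrumD=2} then yields
\[
v^1=v(x^1)=\phi'(\rho)(\rho^2+c_1\rho+c_0),
\]
a function of $\rho$, hence of $x^1$, alone.

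\textbf{Second step.} In coordinates $[v,\partial_1]=-\sum_j\partial_1(v^j)\partial_j$, so it is enough to show $[v,\partial_1]\in D_1$. Since $D_1$ is one-dimensional and both $\partial_1$ and $\Lambda$ lie in it, $\partial_1$ is a function multiple of $\Lambda$, so the claim reduces to showing $[v,\Lambda]\in\spann\{\Lambda\}$. I would compute $[v,\grad\rho]$ by applying the Leibniz rule for $\mathcal{L}_v$ to the identity $g(\grad\rho,X)=X(\rho)$, obtaining
\[
g([v,\grad\rho],X)=X(v(\rho))-(\mathcal{L}_vg)(\grad\rho,X).
\]
Inserting $v(\rho)=\rho^2+c_1\rho+c_0$, the identity $\mathcal{L}_vg(X,Y)=g(AX,Y)+\gamma g(X,Y)$ (for a suitable function $\gamma$) which follows from Remark~\ref{rem:Av}, and the eigenvector relation $A\grad\rho=\rho\grad\rho$ from Lemma~\ref{lem:onenonconstant}, the right-hand side collapses to $(\rho+c_1-\gamma)\,g(\grad\rho,X)$. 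Hence $[v,\grad\rho]=(\rho+c_1-\gamma)\grad\rho$, which lies in $D_1$, and the conclusion $\partial_1(v^j)=0$ for $j\ge 2$ follows at once.

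\textbf{Main obstacle.} The subtle point is in the second step: the standard argument via equation~\eqref{eq:liebracket} only gives $[v,\Lambda]\in E_A(\rho)=\spann\{\Lambda,\bar\Lambda\}$, and ruling out a $\bar\Lambda$-component requires explicit use of the structure of $\mathcal{L}_vg$ supplied by Remark~\ref{rem:Av}. It is precisely the fact that the ``non-conformal'' part of $\mathcal{L}_vg$ equals $g(A\cdot,\cdot)$, together with $\grad\rho$ being an $A$-eigenvector, that forces the commutator to land in the one-dimensional subspace $D_1\subset E_A(\rho)$ rather than in all of $E_A(\rho)$.
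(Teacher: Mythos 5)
Your proof is correct, and both halves take a genuinely different route from the paper's. For $\partial_i v^1=0$ ($i\ge 2$), the paper decomposes each $\partial_i$ ($i\ge 2$) into a multiple of $\bar\Lambda$ plus eigenvectors for the constant eigenvalues and checks $[v,\partial_i]\in D_2$ using $[v,\bar\Lambda]=f\bar\Lambda$ and $(A-\mu\Id)[v,U]=0$; your observation that $x^1=\phi(\rho)$, so that $v^1=v(x^1)=\phi'(\rho)\,v(\rho)$ is a function of $\rho$ alone, is more elementary and bypasses the eigenvector decomposition entirely. For $\partial_1 v^j=0$ ($j\ge2$), the paper also reduces to $[v,\Lambda]\in\spann\{\Lambda\}$ but gets there differently: it applies \eqref{eq:liebracket} to $\bar\Lambda$ to write $[v,\bar\Lambda]=f\bar\Lambda+h\Lambda$, kills $h$ by pairing with $d\rho$ (using $\bar\Lambda(\rho)=0$ and $\Lambda(\rho)\ne0$), and then transfers the conclusion to $\Lambda$ via holomorphy of $v$; you instead compute $[v,\grad\rho]$ directly from $\mathcal{L}_vg=g(A\,\cdot,\cdot)+\gamma g$ (Remark \ref{rem:Av}) together with $A\grad\rho=\rho\grad\rho$, and the computation
\begin{equation*}
g([v,\grad\rho],X)=(2\rho+c_1)\,g(\grad\rho,X)-(\rho+\gamma)\,g(\grad\rho,X)=(\rho+c_1-\gamma)\,g(\grad\rho,X)
\end{equation*}
is correct. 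Your route buys independence from the holomorphy of $v$ and from the detour through $\bar\Lambda$, at the price of invoking the normalized form of $\mathcal{L}_vg$ from Remark \ref{rem:Av}, which the paper only deploys later (in the proof of Lemma \ref{lem:metricinadaptedcoord}); the paper's argument gets by with the bracket relations \eqref{eq:liebracket} already at hand. Your closing remark about the obstacle is exactly on point: \eqref{eq:liebracket} alone only confines $[v,\Lambda]$ to $E_{A}(\rho)$, and some additional structural input is needed to exclude the $\bar\Lambda$-component.
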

\begin{proof}
Since $\bar{\Lambda}$ is an eigenvector of $A$ corresponding to the non-constant eigenvalue $\rho$, the first equation in \eqref{eq:liebracket} implies that
\begin{align}
\lbrack v,\bar{\Lambda}\rbrack=f\bar{\Lambda}+h\Lambda\nonumber
\end{align}
for some functions $f,h$.  If we apply $d\rho$ to both sides of the equation above, we obtain $\bar{\Lambda}(v(\rho))=\bar{\Lambda}(\rho^{2}+c_{1}\rho+c_{0})=0$ on the left-hand side and $h\Lambda(\rho)$ on the right-hand side. Since $\Lambda(\rho)\neq 0$ on $M_{d\rho\neq0}$, we necessarily have $h=0$. By definition $v$ is holomorphic and since $\bar{\Lambda}=J\Lambda$, we see that the equations 
\begin{align}
\lbrack v,\bar{\Lambda}\rbrack=f\bar{\Lambda}\mbox{ and }\lbrack v,\Lambda\rbrack=f\Lambda\label{eq:brack1}
\end{align}
are satisfied.\\
For an eigenvector $U$ of $A$, corresponding to some constant eigenvalue $\mu$, the first equation in \eqref{eq:liebracket} shows that
\begin{align}
\lbrack v,U\rbrack\in E_{A}(\mu).\label{eq:brack2}
\end{align}
For each index $i\geq 2$, $\partial_{i}$ is contained in $D_{2}$. On the other hand, $\partial_{1}$ is always proportional to $\Lambda$. We obtain 
\begin{align}
\partial_{i}\sim\bar{\Lambda}\mbox{ mod }E_{A}(\rho_{1})\oplus E_{A}(\rho_{2})\mbox{ and }\partial_{1}\sim\Lambda.\nonumber
\end{align}
Using equation \eqref{eq:brack1} and equation \eqref{eq:brack2}, we see that
\begin{align}
\lbrack v,\partial_{i}\rbrack\in D_{2}\mbox{ for all $i\geq 2$ and }\lbrack v,\partial_{1}\rbrack\in D_{1}.\nonumber
\end{align}
This means that $\partial_{i}v^{1}=0$ and $\partial_{1}v^{i}=0$ for all $i\geq 2$. Hence, 
\begin{align}
v=(v^{1}(x^{1}),v^{2}(x^{2},...,x^{2n}),...,v^{2n}(x^{2},...,x^{2n}))\nonumber
\end{align}
as we claimed.
\end{proof}
Let us write $v=v_{1}+v_{2}$ with respect to the decomposition $TM_{|M_{d\rho\neq0}}=D_{1}\oplus D_{2}$ (as in \eqref{25}). The vector fields $v_{1}$ and $v_{2}$ are well-defined and smooth on   $M_{d\rho\neq0}$.  By Lemma \ref{lem:decomp},  we have  $\lbrack v_{1},v_{2}\rbrack=0$.
\begin{lem}
\label{lem:propv1}
The non-constant eigenvalue $\rho$ satisfies the equation $v_{1}(\rho)=\rho^{2}+c_{1}\rho+c_{0}$ and the evolution of $\rho$ along the flow-lines of $v_{1}$ is given by equation \eqref{eq:odesol}. 
Moreover, $v_{1}$ is a non-vanishing complete vector field on $M_{d\rho\neq0}$. 
\end{lem}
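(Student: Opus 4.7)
The plan is to reduce everything to what we already know about $v$ and then use the decomposition $v = v_1 + v_2$ to transfer properties to $v_1$ alone.

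First I would establish $v_1(\rho) = \rho^2 + c_1\rho + c_0$. By equation \eqref{eq:vrho} we already have $v(\rho) = \rho^2 + c_1\rho + c_0$, so it suffices to show $v_2(\rho) = 0$. The vector field $v_2$ lies in $D_2 = \spann\{\bar\Lambda\} \oplus E_A(\rho_1) \oplus E_A(\rho_2)$, and Lemma \ref{lem:onenonconstant} gives $\bar\Lambda(\rho) = 0$, while Proposition \ref{prop:modulo} implies $U(\rho) = 0$ for any eigenvector $U$ of $A$ corresponding to an eigenvalue different from $\rho$. Hence every summand of $v_2$ annihilates $\rho$, giving $v_2(\rho) = 0$ and the claim. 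The evolution formula \eqref{eq:odesol} along a flow line of $v_1$ through $x \in M_{d\rho \neq 0}$ is then immediate: the same ODE $\dot\rho = \rho^2 + c_1\rho + c_0$ that was analyzed in Lemma \ref{lem:spectrumD=2} governs $\rho$ along the integral curve, and by the global ordering argument used there, the $\tanh$-solution is the only possibility.

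Next I would establish that $v_1$ does not vanish on $M_{d\rho \neq 0}$. The key observation is that Lemma \ref{lem:minmax} together with the density of $M^1 = M_{v \neq 0} \cap M_{d\rho \neq 0}$ gives
\[
-\tfrac{c_1}{2} - \sqrt{\alpha} < \rho(x) < -\tfrac{c_1}{2} + \sqrt{\alpha} \qquad \text{for every } x \in M_{d\rho \neq 0},
\]
since the boundary values are attained exactly at critical points of $\rho$. Now if $v_1(x_0) = 0$ for some $x_0 \in M_{d\rho \neq 0}$, then $v_1(\rho)(x_0) = 0$, so by the first step $\rho(x_0)^2 + c_1 \rho(x_0) + c_0 = 0$, which forces $\rho(x_0) \in \{-\tfrac{c_1}{2} \pm \sqrt{\alpha}\}$, contradicting the strict inequality above.

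Finally I would prove completeness of $v_1$ on $M_{d\rho \neq 0}$. This is where the main care is needed: $v_1$ is only defined on the open subset $M_{d\rho \neq 0}$, so even though $M$ is compact, completeness is not automatic. I would argue by contradiction: let $\gamma(t)$ be a maximal integral curve of $v_1$ defined on $[0, T)$ with $T < \infty$. Since $M$ is compact, some sequence $t_n \nearrow T$ has $\gamma(t_n) \to x_\infty \in M$. If $x_\infty \in M_{d\rho \neq 0}$, the curve extends past $T$ by local existence, contradicting maximality. Hence $x_\infty \in M \setminus M_{d\rho \neq 0}$, so $d\rho_{|x_\infty} = 0$ and Lemma \ref{lem:minmax} yields $\rho(x_\infty) \in \{-\tfrac{c_1}{2} \pm \sqrt{\alpha}\}$. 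On the other hand, by the explicit formula \eqref{eq:odesol} applied along $\gamma$, we have $\rho(\gamma(t_n)) = -\tfrac{c_1}{2} - \sqrt{\alpha}\,\tanh(\sqrt{\alpha}(t_n + d))$, which converges to a value strictly inside $(-\tfrac{c_1}{2} - \sqrt{\alpha}, -\tfrac{c_1}{2} + \sqrt{\alpha})$ because $t_n \to T < \infty$. This contradiction forces $T = \infty$, and the analogous argument for negative times completes the proof.

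The main obstacle is the completeness step, since it is the only place where global compactness and the local definition of $v_1$ interact nontrivially; the rest follows from the spectral structure established in Lemmas \ref{lem:spectrumD=2}--\ref{lem:minmax}.
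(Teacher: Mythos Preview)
Your proof is correct and follows essentially the same approach as the paper. The only noteworthy difference is that the paper derives the evolution formula \eqref{eq:odesol} along $v_1$-flow lines by invoking Lemma~\ref{lem:decomp} to identify $\rho(\Phi^{v_1}_t(x))$ with $\rho(\Phi^v_t(x))$ directly, whereas you re-derive it from the ODE; your phrase ``by the global ordering argument used there'' is a slight mislabel (that argument was used for uniqueness of the non-constant eigenvalue, not for selecting the $\tanh$ branch), and the correct justification is the one you give in the next paragraph, namely that the initial value $\rho(x)$ lies strictly in $(-\tfrac{c_1}{2}-\sqrt{\alpha},\,-\tfrac{c_1}{2}+\sqrt{\alpha})$.
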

\begin{proof}
Since by  Proposition \ref{prop:modulo} and Lemma \ref{lem:onenonconstant} we have  $d\rho(V)=0$ for all  $V\in D_{2}$, we  have $v_{2}(\rho)=0$ and, hence, $v_{1}(\rho)=v(\rho)=\rho^{2}+c_{1}\rho+c_{0}$. Using Lemma \ref{lem:decomp}, we obtain that the restriction of $\rho$ on the flow line $\Phi^{v_{1}}_{t}(x)$ coincides with the restriction of $\rho$ on $\Phi^{v}_{t}(x)$ for all $x\in M_{d\rho\neq0}$. Therefore the evolution of $\rho$ along flow lines of $v_{1}$ is again given by equation \eqref{eq:odesol}.\\
Let us assume that $v_{1}(x)=0$ for some point $x\in M_{d\rho\neq0}$. We obtain that $0=\rho(x)^{2}+c_{1}\rho(x)+c_{0}$, which implies that $\rho(x)$ is a maximum or minimum value of $\rho$ (see Lemma \ref{lem:minmax}). It follows that $d\rho_{|x}=0$, contracting our assumptions.\\
Finally, let us show that $v_{1}$ is complete. Take a maximal integral curve $\sigma:(a,b)\to M_{d\rho\ne 0}$ of $v_{1}$ and assume  $b<\infty$. Since $M$ is closed, there exists a   sequence $\{b_{n}\}\subset  (a,b)$, converging to  $b$  such that $\mbox{lim}_{\mbox{\tiny $n\rightarrow \infty$}}\sigma(b_{n})=y$ for some $y\in M$. Then,  $y\in M\setminus M_{d\rho\neq 0}$ since otherwise  
 the maximal interval $(a,b)$ of $\sigma$ can be extended beyond $b$. Then,  $d\rho_{|y}=0$,  and Lemma \ref{lem:minmax} implies that $\rho(y)$  is  equal to the minimum value $-\frac{c_{1}}{2}-\sqrt{\alpha}$. We obtain that $\mbox{lim}_{\mbox{\tiny $n\rightarrow \infty$}}\rho(\sigma(b_{n}))=-\frac{c_{1}}{2}-\sqrt{\alpha}$. On the other hand, formula \eqref{eq:odesol} shows that this value cannot be obtained in finite time $b<\infty$. This gives us  a contradiction  implying   $v_{1}$ is a complete vector field on $M_{d\rho\neq 0}$.
\end{proof}
Let us now  calculate the restriction of the metric $g$ to the integral manifolds of the distribution $E_{A}(\rho)=\spann\{v_{1},\bar{\Lambda}\}$. 
\begin{lem}
\label{lem:metricinadaptedcoord}
In a neighborhood of each point of $M_{d\rho\neq0}$, it is possible to choose the coordinates $t=x^{1},x^{2},...,x^{2n}$ adapted to the decomposition $TM_{|M_{d\rho\neq0}}=D_{1}\oplus D_{2}$ in such a way, that $v_{1}=\partial_{1}$, $\bar{\Lambda}=\partial_{2}$ and
\begin{align}
g=\left(\begin{array}{c|c|ccc}h&0&0&\dots&0\\
                \hline 0&g(\Lambda,\Lambda)&*&\dots&*\\
\hline 0&*&*&\dots&*\\
       \vdots&\vdots&\vdots&&\vdots\\
        0&*&*&\dots&*
      \end{array}\right).\label{eq:metric}
\end{align}
The functions $h=g(v_{1},v_{1}),g(\Lambda,\Lambda)$ and $\rho$ depend on the first coordinate $t$ only and are given explicitly by the formulas
\begin{align}
\begin{array}{l}
h(t)=D \frac{e^{\left(C- c_{1}\right)t}}{\cosh^{2}(\sqrt{\alpha}(t+d))},\vspace{4mm}\\
g(\Lambda,\Lambda)=\frac{\dot{\rho}^{2}}{4h}\mbox{ (where $\dot{\rho}=\tfrac{d\rho}{dt}$) and }\vspace{4mm}\\
\rho(t)=-\frac{c_{1}}{2}-\sqrt{\alpha}\,\tanh(\sqrt{\alpha}(t+d)).
\end{array}\label{eq:sol}
\end{align}
The constants $\alpha>0$ and $C$ in equation \eqref{eq:sol} are defined as $\alpha=\frac{c_{1}^{2}}{4}-c_{0}$ and $C=-\tfrac{n-1}{2}c_{1}-(2k_{1}+1-n)\sqrt{\alpha}+(n+1)\beta$, where $D>0,d,\beta,c_{1},c_{0}\in\mathbb{R}$ and $2k_{1}$ is the multiplicity of the constant eigenvalue $\rho_{1}$. The constants $c_{1},c_{0}$ are the same as in equation \eqref{eq:lie}. Moreover, $c_{1},c_{0}$ and $\beta$ are global constants,  i.e., they are the same for each coordinate system of the above type.
\end{lem}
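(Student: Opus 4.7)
The plan is to construct the chart explicitly from the flows of $v_1$ and $\bar\Lambda$, use the main equation \eqref{f} to force the relevant metric components to depend only on $t$, and then extract an explicit ODE for $h(t)$ from $\mathcal L_v g$ that can be integrated directly.

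For the coordinates, I start from Lemma \ref{lem:propv1}, which supplies a nowhere-vanishing complete vector field $v_1$ on $M_{d\rho\ne 0}$, and from the fact that $D_2=\ker d\rho$ is integrable (being the kernel of a closed one-form). Near any base point I pick an integral leaf $\Sigma_0$ of $D_2$ and transport it by $\Phi^{v_1}_t$; setting $x^1=t$ gives a chart with $v_1=\partial_1$ and $\partial_i\in D_2$ for $i\ge 2$, so the off-diagonal block of $g$ vanishes by $D_1\perp D_2$. To additionally arrange $\bar\Lambda=\partial_2$, I need $[v_1,\bar\Lambda]=0$. Since $v_1=\phi\,\Lambda$ with $\phi=\dot\rho/(2g(\Lambda,\Lambda))$ (obtained from $v_1(\rho)=v(\rho)=\rho^2+c_1\rho+c_0$ and $\Lambda(\rho)=2g(\Lambda,\Lambda)$), and since $[\Lambda,\bar\Lambda]=0$ by Corollary \ref{cor:com}, one has $[v_1,\bar\Lambda]=-\bar\Lambda(\phi)\,\Lambda$. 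Here $\bar\Lambda(\dot\rho)=0$ because $\bar\Lambda(\rho)=0$, and the crucial identity $\bar\Lambda(g(\Lambda,\Lambda))=2g(\nabla_{\bar\Lambda}\Lambda,\Lambda)=2g(\nabla_\Lambda\bar\Lambda,\Lambda)=0$ follows by combining $[\Lambda,\bar\Lambda]=0$ with the Killing equation for $\bar\Lambda$ applied to $X=Y=\Lambda$. Hence $\bar\Lambda(\phi)=0$, and transporting coordinates $x^2,\ldots,x^{2n}$ from $\Sigma_0$ (chosen so that $\partial_2=\bar\Lambda|_{\Sigma_0}$) along $\Phi^{v_1}_t$ yields $\bar\Lambda=\partial_2$ throughout the chart.

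For the entries of $g$, the $(2,2)$-component is $g(\bar\Lambda,\bar\Lambda)=g(\Lambda,\Lambda)$, while $h=g(v_1,v_1)=\phi^2g(\Lambda,\Lambda)=\dot\rho^2/(4g(\Lambda,\Lambda))$ gives the second line of \eqref{eq:sol}. To see that $g(\Lambda,\Lambda)$ (and hence $h$) depends only on $t$, I insert $Y=\Lambda$ into \eqref{f} and use $A\Lambda=\rho\Lambda$ and $g(\Lambda,\bar\Lambda)=0$; for $X\in D_2$ this yields
\[
(\rho\,\Id-A)\nabla_X\Lambda = g(\Lambda,X)\Lambda+g(\Lambda,\Lambda)X+g(\Lambda,JX)\bar\Lambda .
\]
For $X\in E_A(\rho_i)$, $i=1,2$, the right-hand side equals $g(\Lambda,\Lambda)X$, which forces $\nabla_X\Lambda\in E_A(\rho_i)\perp\Lambda$; for $X=\bar\Lambda$ the right-hand side vanishes outright. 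In all cases $X(g(\Lambda,\Lambda))=2g(\nabla_X\Lambda,\Lambda)=0$, so $g(\Lambda,\Lambda)$ is constant along $D_2$-leaves.

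For the explicit formula of $h$, I compute $(\mathcal L_v g)(v_1,v_1)$ two ways. Since $[v,v_1]=[v_2,v_1]=0$ by Lemma \ref{lem:decomp} and $v_2(h)=0$ (as $v_2\in D_2$ and $h$ depends on $t$ only), the left-hand side equals $v(h)=\dot h$. On the other hand, Remark \ref{rem:Av} gives $g^{-1}\mathcal L_v g = A+\psi\,\Id$ with $\psi=\tfrac12\trace A+(n+1)\beta$, so, using $Av_1=\rho v_1$, the same quantity equals $(\rho+\psi)h$. Plugging in $\trace A=2\rho+2k_1\rho_1+2k_2\rho_2$ and $\rho_{1,2}=-c_1/2\mp\sqrt\alpha$ (the roots of $X^2+c_1X+c_0$) converts $\rho+\psi$ into $2\rho+C$ with $C$ precisely as in the statement. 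The resulting ODE $\dot h/h = 2\rho+C$ integrates at once, using the explicit $\rho(t)$ from \eqref{eq:odesol} and $\int\tanh = \ln\cosh$, to the advertised $h(t)=De^{(C-c_1)t}/\cosh^2(\sqrt\alpha(t+d))$; positivity of $g$ forces $D>0$. Globality of $c_0,c_1,\beta$ is automatic since \eqref{eq:lie} and \eqref{eq:Av} are globally defined equations with constant coefficients. The main obstacle is the bracket identity $[v_1,\bar\Lambda]=0$, which in turn hinges on the small but essential observation $\bar\Lambda(g(\Lambda,\Lambda))=0$; everything else is a controlled computation driven by \eqref{f}, \eqref{eq:Av} and the already-known ODE for $\rho$.
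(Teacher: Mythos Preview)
Your argument is essentially correct and, once a small slip is repaired, gives a valid proof by a route that differs from the paper's in one key step.

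\textbf{The gap.} For $X\in E_A(\rho_i)$ the identity $(\rho\,\Id-A)\nabla_X\Lambda=g(\Lambda,\Lambda)X$ only determines $\nabla_X\Lambda$ modulo $\ker(\rho\,\Id-A)=E_A(\rho)$; it does \emph{not} force $\nabla_X\Lambda\in E_A(\rho_i)$ as you claim. What you actually need is $g(\nabla_X\Lambda,\Lambda)=0$, and this follows immediately from the symmetry of the Hessian, $g(\nabla_X\Lambda,\Lambda)=g(\nabla_\Lambda\Lambda,X)$, together with Lemma~\ref{lem:integrable} (the integral manifolds of $D_1$ are totally geodesic, so $\nabla_\Lambda\Lambda\in D_1\perp X$). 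With this one-line fix your derivation that $g(\Lambda,\Lambda)$, and hence $h=\phi^2 g(\Lambda,\Lambda)$, depends only on $t$ goes through.

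\textbf{Comparison with the paper.} The paper proves that $h=g_{11}$ depends only on $t$ by an entirely different mechanism: it forms the block-diagonal integral $F=I_0+I_1+I_2$ (built from the quadratic integrals \eqref{eq:int3}), whose nullity is $D_1$, and reads off $\partial_i g^{11}=0$ for $i\ge 2$ from the $p_1^2 p_i$-coefficient of the Poisson bracket $\{H,F\}=0$. Your route avoids the integrals entirely and extracts the same information directly from \eqref{f} (plus the totally geodesic property of $D_1$); this is more elementary and self-contained. The paper's argument, on the other hand, shows how the integrable-systems machinery of Section~\ref{sec:quadintglobord} controls the metric components. For the commutation $[v_1,\bar\Lambda]=0$ the two arguments are cosmetically different but equivalent: the paper observes $[v_1,\bar\Lambda]\in D_1\cap D_2$, while you compute $\bar\Lambda(\phi)=0$ directly via the Killing equation. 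The derivation of the ODE $\dot h=(2\rho+C)h$ from \eqref{eq:Av} and its integration are the same in both proofs.
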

\begin{proof}
In a neighborhood of an arbitrary point of $M_{d\rho\neq0}$, let us introduce a chart $x^{1},x^{2},...,x^{2n}$, adapted to the decomposition $TM_{|M_{d\rho\neq0}}=D_{1}\oplus D_{2}$. By Lemma \ref{lem:decomp} and Lemma \ref{lem:propv1}, we can choose  these coordinates such that the flow line parameter $t$ of $v_{1}$ coincides with $x^{1}$ (i.e., such that the first component of $v$ in the coordinate system equals $\tfrac{\partial}{\partial x^1}$). By \eqref{eq:brack1}, we have $[v,\bar{\Lambda}]\in D_{2}$. Moreover, $[v_{2},\bar{\Lambda}]\in D_{2}$ since $D_{2}$ is integrable. It follows that $[v_{1},\bar{\Lambda}]\in D_{2}$. On the other hand, since $v_{1}=f\Lambda$ for some function $f$ and $[\Lambda,\bar{\Lambda}]=0$, we obtain that $[v_{1},\bar{\Lambda}]=-\bar{\Lambda}(f)\Lambda\in D_{1}$, implying 
\begin{align}
[v_{1},\bar{\Lambda}]=0.\nonumber
\end{align}
It follows, that we can choose the second coordinate $x^{2}$ in such a way that $\bar{\Lambda}=\partial_{2}$.\\  
Next let us show that $h=g_{11}$ depends on the first coordinate of the adapted chart only. For this, let $I$ be an integral of second order for the geodesic flow of $g$ such that $I$ is block-diagonal with respect to the adapted coordinates $t,x^{2},...,x^{2n}$. For the moment we adopt the convention that latin indices run from $2$ to $2n$ such that $I$, considered as a polynomial on $T^{*}M$, can be written as $I=I^{11}p_{1}^{2}+I^{ij}p_{i}p_{j}$. We calculate the poisson bracket $0=\{H,I\}$ to obtain the equations
\begin{align}
0=I^{ik}\partial_{k}g^{11}-g^{ik}\partial_{k}I^{11}\mbox{ for all }i=2,...,2n.\label{eq:poisson}
\end{align}
Inserting integrals $I$ of special type, we can impose restrictions on the metric. Obviously the integrals $I_{0},I_{1},I_{2}$ defined in equation \eqref{eq:int3} are block-diagonal. On the other hand, in the proof of Lemma \ref{lem:integrable} it was shown that they satisfy $\mbox{\bf null}I_{1}=E_{A}(\rho)\oplus E_{A}(\rho_{2})$, $\mbox{\bf null}I_{2}=E_{A}(\rho)\oplus E_{A}(\rho_{1})$ and $\mbox{null}I_{0}=\spann\{\Lambda\}\oplus E_{A}(\rho_{1})\oplus E_{A}(\rho_{2})$. It follows that the integral $F=I_{0}+I_{1}+I_{2}$ is block-diagonal and that its nullity is  equal to $D_{1}$. Then $F$ can be written as $F^{ij}p_{i}p_{j}$ and the matrix $(F^{ij})_{i,j\geq 2}$ is invertible at each point where the coordinates are defined. Replacing the integral $I$ in equation \eqref{eq:poisson} with $F$ yields
\begin{align}
\partial_{i}g^{11}=0\nonumber
\end{align}
for all $2\leq i\leq 2n$ hence, the metric component $g_{11}=(g^{11})^{-1}$ depends on $t$ only.\\ 
Now let us show the explicit dependence of the functions $h,\rho$ and $g(\Lambda,\Lambda)$ on the parameter $t$. We already know that $h=g_{11}$ and $\rho$ depend on $t$ only (for $\rho$ this follows from Proposition \ref{prop:modulo} and Lemma \ref{lem:onenonconstant}) and by Lemma \ref{lem:propv1}, the dependence of $\rho$ on the first coordinate $t$ is given by equation \eqref{eq:odesol}.\\ 
Recall that $\lambda=\frac{1}{4}\trace\,A=\frac{1}{2}\rho+\mbox{const}$. It follows that $d\lambda=\frac{1}{2}\dot{\rho}\,dt$ and hence, $\Lambda=\grad\,\lambda=\tfrac{\dot{\rho}}{2h}\partial_{1}$. We obtain 
\begin{align}
g(\Lambda,\Lambda)=\frac{\dot{\rho}^{2}}{4h}.\nonumber
\end{align}
What is left is to clarify the dependence of the function $h$ on the parameter $t$. Note that in the coordinates $t,x^{2},...,x^{2n}$, the $h$-projective vector field $v$ is given by $v=\partial_{1}+v_{2}$. Let us denote by  $\dot{h}$ and $\dot{\rho}$ the derivatives of $h$ and $\rho$ with respect to the coordinate $t$ and denote the restriction of $g$ to the distribution $D_{2}$ by  $\tilde{g}$. Then we calculate
\begin{align}
\mathcal{L}_{v}g=\mathcal{L}_{v_{1}}g+\mathcal{L}_{v_{2}}g=\dot{h}\,dt\otimes dt+\mathcal{L}_{v_{1}}\tilde{g}+\mathcal{L}_{v_{2}}\tilde{g},\label{eq:liederiv}
\end{align}
where we used that $v_{2}(h)=0$ and $\mathcal{L}_{v_{2}}dt=0$ which follows from $[v_{1},v_{2}]=0$ and $[v_{2},\partial_{i}]\in D_{2}$ for all $i\geq 2$. Note that $\mathcal{L}_{v_{1}}\tilde{g}$ and $\mathcal{L}_{v_{2}}\tilde{g}$ do not contain any expressions involving $dt\otimes dx^{i}$, $dx^{i}\otimes dt$ or $dt\otimes dt$. On the other hand, we already know that $A_{v}$ given in formula \eqref{hprojective vector field} satisfies equation \eqref{eq:Av}. After multiplication with $g$ from the left, \eqref{eq:Av} can be written as
\begin{align}
\mathcal{L}_{v}g-\frac{\trace(g^{-1}\circ\mathcal{L}_{v}g)}{2(n+1)}g=a+\beta g\nonumber
\end{align}
for $a=g\circ A$ and some constant $\beta$. Calculating the trace on both sides yields 
\begin{align}
\mathcal{L}_{v}g= a+(\beta +\frac{1}{2}\trace( A+\beta \Id))g= a+((n+1)\beta +\rho+ k_{1}\rho_{1}+ k_{2}\rho_{2})g.\nonumber
\end{align}
Now we can insert equation \eqref{eq:liederiv} on the left-hand side to obtain
\begin{align}
\dot{h}\,dt\otimes dt+\mathcal{L}_{v_{1}}\tilde{g}+\mathcal{L}_{v_{2}}\tilde{g}= a+((n+1)\beta +\rho+ k_{1}\rho_{1}+ k_{2}\rho_{2})g.\label{eq:liederiv2}
\end{align}
Since equation \eqref{eq:liederiv2} is in block-diagonal form, it splits up into two separate equations. The first equation which belongs to the matrix entry on the upper left reads
\begin{align}
\dot{h}=(2 \rho+C)h\mbox{, where we defined }C= k_{1}\rho_{1}+k_{2}\rho_{2}+(n+1)\beta.\nonumber
\end{align}
Integration of this differential equation yields
\begin{align}
h(t)=De^{Ct+2 \int\rho dt}=De^{\left(C- c_{1}\right)t-2\ln(\cosh(\sqrt{\alpha}(t+d)))}\nonumber
\end{align}
for $\alpha=\frac{c_{1}^{2}}{4}-c_{0}>0$ and some constants $d$ and $D>0$. If we insert the formulas $\rho_{1}=-\tfrac{c_{1}}{2}-\sqrt{\alpha}$ and $\rho_{2}=-\tfrac{c_{1}}{2}+\sqrt{\alpha}$ for the constant eigenvalues in the definition of the constant $C$, we obtain 
\begin{align}
C=-\frac{n-1}{2}c_{1}-(2k_{1}+1-n)\sqrt{\alpha}+(n+1)\beta.\nonumber
\end{align}
Finally, Lemma \ref{lem:metricinadaptedcoord} is proven.
\end{proof}
The formulas \eqref{eq:sol} in Lemma \ref{lem:metricinadaptedcoord} show that the restriction
\begin{align}
g_{|E_{A}(\rho)}=\left(\begin{array}{cc}h&0\\0&g(\Lambda,\Lambda)\end{array}\right)\label{eq:metrictwodim}
\end{align}
of the metric to the integral manifolds of the distribution $E_{A}(\rho)=\spann\{v_{1},\bar{\Lambda}\}$ (the coordinates are as in  Lemma \ref{lem:metricinadaptedcoord}, i.e., $\partial_{1}=v_{1}$ and $\partial_{2}=\bar{\Lambda}$) depends on the global constants $c_{1},c_{0},k_{1}$ and $\beta$. The constants $D$ and $d$ are not interesting; they can depend a priori on the particular choice of the coordinate neighborhood.  Note that $c_{1}$ and $c_{0}$ are subject to the condition $\alpha=c_{1}^{2}/4-c_{0}>0$. Now our goal is to show that we can impose further constraints on the constants such that the only metric which is left is the metric of positive constant holomorphic sectional curvature. So far, we did not really use that the manifold is closed, indeed, most of the statements listed above still would be true if this condition is omitted. However,  as the next lemma shows, the condition that $M$ is closed  imposes strong restrictions on the constants from  Lemma \ref{lem:metricinadaptedcoord}:
\begin{lem}
\label{lem:metriccomp}
The constants from the formulas \eqref{eq:sol} of Lemma \ref{lem:metricinadaptedcoord} satisfy $C=c_{1}$. In particular, the function $h=g(v_{1},v_{1})$ has the form 
\begin{align}
h(t)=\frac{D}{\cosh^{2}(\sqrt{\alpha}(t+d))}.\label{eq:sol2}
\end{align}
\end{lem}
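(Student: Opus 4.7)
The strategy is to use the compactness of $M$ via a sectional curvature bound to obtain an algebraic constraint on $\lambda := (C-c_{1})/(2\sqrt{\alpha})$. By Lemma \ref{lem:integrable}, the integral manifolds of $E_{A}(\rho)=\spann\{v_{1},\bar{\Lambda}\}$ are two-dimensional and totally geodesic, so on such a leaf the Gaussian curvature of the induced metric \eqref{eq:metrictwodim} equals the sectional curvature of $(M,g)$ on the corresponding $2$-plane; in particular, it is bounded by a constant depending only on $(M,g)$.

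To exploit this, I first rewrite the leaf metric using $\rho$ itself as a coordinate. Since $F(\rho):=g(\Lambda,\Lambda)$ depends only on $t$ by \eqref{eq:sol} and hence, via the monotone dependence $\rho=\rho(t)$, only on $\rho$, and since $dt=d\rho/\dot\rho$ with $\dot\rho^{2}=4hF$, the leaf metric \eqref{eq:metrictwodim} takes the form
\begin{equation*}
g_{|E_{A}(\rho)}=\frac{d\rho^{2}}{4F(\rho)}+F(\rho)\,(dx^{2})^{2}.
\end{equation*}
Inverting \eqref{eq:odesol} with $\rho_{1,2}=-c_{1}/2\mp\sqrt{\alpha}$ yields the identities $\cosh^{2}(\sqrt{\alpha}(t+d))=\alpha/[(\rho-\rho_{1})(\rho_{2}-\rho)]$ and $e^{2\sqrt{\alpha}(t+d)}=(\rho_{2}-\rho)/(\rho-\rho_{1})$; substituting these into \eqref{eq:sol} and using $\dot\rho=(\rho-\rho_{1})(\rho-\rho_{2})$ gives, after a short computation,
\begin{equation*}
F(\rho)=K_{0}(\rho-\rho_{1})^{1+\lambda}(\rho_{2}-\rho)^{1-\lambda},
\end{equation*}
with $K_{0}>0$ a constant. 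A direct calculation (or the warped-product formula after passing to arc-length along $\rho$) then gives the Gaussian curvature as $K_{\mathrm{Gauss}}=-2F''(\rho)$.

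Since $|F''|$ is bounded on $(\rho_{1},\rho_{2})$ by the compactness of $M$, an asymptotic analysis at the two endpoints pins down $\lambda$. Expanding $F''$, the dominant term as $\rho\to\rho_{1}$ is $K_{0}(1+\lambda)\lambda(\rho_{2}-\rho_{1})^{1-\lambda}(\rho-\rho_{1})^{\lambda-1}$, which is bounded only if $\lambda\ge 1$ or $(1+\lambda)\lambda=0$; symmetrically, at $\rho_{2}$ one needs $\lambda\le -1$ or $(1-\lambda)\lambda=0$. The intersection of these two conditions is $\lambda\in\{-1,0,1\}$. To rule out $\lambda=\pm 1$, note that $F=g(\Lambda,\Lambda)=|\bar{\Lambda}|^{2}$ extends to a smooth bounded function on all of $M$, and by Lemma \ref{lem:onenonconstant} we have $\Lambda=\tfrac{1}{2}\grad\rho$, so $F$ vanishes at every critical point of $\rho$. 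By Lemma \ref{lem:minmax} and compactness of $M$, both extremal values $\rho_{1}$ and $\rho_{2}$ are attained, so $F\to 0$ as $\rho\to\rho_{1}$ and as $\rho\to\rho_{2}$. The explicit formula for $F$ then forces $1+\lambda>0$ and $1-\lambda>0$, i.e., $|\lambda|<1$; combined with $\lambda\in\{-1,0,1\}$ this leaves $\lambda=0$, equivalently $C=c_{1}$, and substitution into \eqref{eq:sol} produces \eqref{eq:sol2}. The main obstacle is combining the two ingredients correctly: the curvature bound alone narrows $\lambda$ to three discrete values, and the vanishing of $\bar{\Lambda}$ at the critical set of $\rho$ is what singles out $\lambda=0$.
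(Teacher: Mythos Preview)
Your proof is correct and follows the same core idea as the paper---use the fact that the leaves of $E_{A}(\rho)$ are totally geodesic, so their Gaussian curvature is an ambient sectional curvature and hence bounded on the closed manifold---but your execution differs in two worthwhile ways.

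First, you change the leaf coordinate from the flow parameter $t$ of $v_{1}$ to the eigenvalue $\rho$ itself. This turns the leaf metric into $d\rho^{2}/(4F)+F\,(dx^{2})^{2}$ and collapses the curvature to the single expression $-2F''(\rho)$, with $F(\rho)=K_{0}(\rho-\rho_{1})^{1+\lambda}(\rho_{2}-\rho)^{1-\lambda}$. The paper instead keeps the $t$-parameter, obtains a three-term expression \eqref{eq:curv4} for the curvature, and analyses the asymptotics of $\cosh$ and $\sinh$ factors as $t\to\pm\infty$; your endpoint analysis of $(\rho-\rho_{i})^{\lambda\mp1}$ is the same information in more transparent form. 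Second, to eliminate the residual cases $\lambda=\pm1$ you invoke only that $g(\Lambda,\Lambda)$ is globally smooth and must vanish at the (attained) extrema of $\rho$; this forces $1\pm\lambda>0$. The paper reaches the companion inequality by a separate argument: it builds a geodesic from a maximum point of $\rho$ to a minimum point, uses the integrals $I_{0},I_{1},I_{2}$ of \eqref{eq:int3} to show this geodesic is a reparametrized integral curve of $v_{1}$, and concludes that $\int_{-\infty}^{\infty}\sqrt{h(t)}\,dt$ is finite, which yields the needed strict bounds on $\lambda$. Your route avoids the integrals here entirely and is shorter; the paper's route has the virtue of reusing the machinery of Section~\ref{sec:quadintglobord}. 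One small point worth making explicit in your write-up: the limit $F\to0$ along the leaf as $\rho\to\rho_{i}$ is justified by taking a subsequential limit point $p\in M$ of the (complete) $v_{1}$-flow line, observing $\rho(p)=\rho_{i}$ by continuity so $d\rho_{p}=0$, hence $F(p)=0$, and then using continuity of $F$ together with the fact that $F$ along the curve is a monotone-parameter function of $\rho$.
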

\begin{proof}
First we will show that certain integral curves of $v_{1}$ always have finite length.  Let $x_{\mbox{\tiny max}}$ and $x_{\mbox{\tiny min}}$ be points where $\rho$ takes its maximum and minimum values respectively. We consider a geodesic $\gamma:[0,1]\rightarrow M$ joining the points $\gamma(0)=x_{\mbox{\tiny max}}$ and $\gamma(1)=x_{\mbox{\tiny min}}$. We again consider  the integrals $I_{0},I_{1},I_{2}:TM\rightarrow\mathbb{R}$ given by  \eqref{eq:int3}. Since the Killing vector field $\bar{\Lambda}$ vanishes at $x_{\mbox{\tiny max}}$, we obtain that $0=I_{0}(\dot{\gamma}(0))=I_{0}(\dot{\gamma}(t))$ for all $t\in[0,1]$. By Lemma \ref{lem:propv1}, $\rho(x_{\mbox{\tiny max}})$ is equal to the constant eigenvalue $\rho_{2}=-\frac{c_{1}}{2}+\sqrt{\alpha}$. It follows that $I_{2}(\zeta)=0$ for all $\zeta\in T_{x_{\mbox{\tiny max}}}M$, in particular, $I_{2}(\dot{\gamma}(0))=0$. This implies that $I_{2}(\dot{\gamma}(t))=0$ for all $t\in[0,1]$. Similarly, considering the point $x_{\mbox{\tiny min}}$, we obtain $I_{1}(\dot{\gamma}(t))=0$ for all $t\in[0,1]$. In the proof of Lemma \ref{lem:integrable}, we already remarked that the distribution $D_{1}$ is equal to the intersection of the nullities   of $I_{0},I_{1}$ and $I_{2}$. It follows that $\dot{\gamma}(t)$ is contained in $D_{1}$ for all $0<t<1$. This implies that $\gamma_{|(0,1)}$ is a reparametrized integral curve $\sigma:\mathbb{R}\rightarrow M$ of the complete vector field $v_{1}$. In particular, the length 
\begin{align}
l_{g}(\sigma)=\int_{-\infty}^{+\infty}\sqrt{g(\dot{\sigma}(t),\dot{\sigma}(t))}dt=\int_{-\infty}^{+\infty}\sqrt{g(v_{1},v_{1})(\sigma(t))}dt=\int_{-\infty}^{+\infty}\sqrt{h(t)}dt\label{eq:length}
\end{align}
of the curve $\sigma$ is equal to the length $l_{g}(\gamma_{|[0,1]})$ of the geodesic $\gamma$. We obtain that $l_{g}(\sigma)$ is finite. By equation \eqref{eq:length}, a necessary condition for $l_{g}(\sigma)$ to be finite is that $\sqrt{h(t)}\rightarrow 0$ when $t\rightarrow\infty$. Note that $h(t)$ is given by the first equation in \eqref{eq:sol} (for some constants $D,d$ that can depend on the particular integral curve $\sigma$). From formula \eqref{eq:sol}, we obtain that $\sqrt{h(t)}$ for $t\rightarrow\infty$ is asymptotically equal to
\begin{align}
\sqrt{h(t)}\sim e^{\left(\frac{C-c_{1}}{2\sqrt{\alpha}}-1\right)t}.\nonumber
\end{align}
The finiteness of $l_{g}(\sigma)$ now implies the condition
\begin{align}
-\frac{C-c_{1}}{2\sqrt{\alpha}}+1>0\label{eq:restr1}
\end{align}
on the global constants given in equation \eqref{eq:sol}.
Let us find further conditions on the constants. Since $M$ is assumed to be closed, the holomorphic sectional curvature
\begin{align}
K_{E_{A}(\rho)}=\frac{g(v_{1},R(v_{1},\bar{\Lambda})\bar{\Lambda})}{g(v_{1},v_{1})g(\Lambda,\Lambda)}=\frac{R_{1212}}{h\,g(\Lambda,\Lambda)}\nonumber
\end{align}
of $E_{A}(\rho)$ has to be bounded on $M$. Since the integral manifolds of $E_{A}(\rho)$ are totally geodesic (by  Lemma \ref{lem:integrable}), 
the sectional curvature $K_{E_{A}(\rho)}$ is equal to  the curvature of the two dimensional metric \eqref{eq:metrictwodim}. After a straight-forward calculation using the formulas \eqref{eq:sol} for $h$ and $g(\Lambda,\Lambda)$, we obtain 
\begin{align}
K_{E_{A}(\rho)}(t)=&\frac{1}{4D}\Bigg[\underbrace{(-4c_{0}-C^{2}+2Cc_{1})}_{=:\gamma_{1}}e^{-(C-c_{1})t}\underbrace{-(C-c_{1})^{2}}_{=:\gamma_{2}}\,\cosh(2\sqrt{\alpha}(t+d))e^{-(C-c_{1})t}\nonumber\vspace{2mm}\\
&\underbrace{-2(C-c_{1})\sqrt{\alpha}}_{=:\gamma_{3}}\,\mbox{sinh}(2\sqrt{\alpha}(t+d))e^{-(C-c_{1})t}\Bigg]\label{eq:curv4}\\
&=:\frac{1}{4D}(\gamma_{1}f_{1}(t)+\gamma_{2}f_{2}(t)+\gamma_{3}f_{3}(t)).\nonumber
\end{align}
Similar to the first part of the proof, we can consider the asymptotic behavior $t\rightarrow\infty$ of the functions $f_{2}(t),f_{3}(t)$ appearing as coefficients of the constants $\gamma_{2},\gamma_{3}$ in formula \eqref{eq:curv4}. We substitute $s=2\sqrt{\alpha}(t+d)$ and obtain
\begin{align}
\begin{array}{l}
f_{2}(s)\sim\cosh(s)e^{-\frac{C-c_{1}}{2\sqrt{\alpha}}s}\underset{t\gg0}{\sim} e^{\left(-\frac{C-c_{1}}{2\sqrt{\alpha}}+1\right)t}\overset{\eqref{eq:restr1}}{\underset{t\rightarrow\infty}{\longrightarrow}}\infty,\vspace{2mm}\\
f_{3}(s)\sim\mbox{sinh}(s)e^{-\frac{C-c_{1}}{2\sqrt{\alpha}}s}\underset{t\gg0}{\sim} e^{\left(-\frac{C-c_{1}}{2\sqrt{\alpha}}+1\right)t}\overset{\eqref{eq:restr1}}{\underset{t\rightarrow\infty}{\longrightarrow}}\infty.
\end{array}\nonumber
\end{align}
As we already have mentioned, the sectional curvatures of a closed manifold are bounded and hence, $K_{E_{A}(\rho)}(t)$ must be finite when $t$ approaches the limit $t\rightarrow\infty$. Using the formulas for the asymptotic behavior of $f_{2}(t)$ and $f_{3}(t)$ given above, this condition imposes the restriction $\gamma_{2}=-\gamma_{3}$ on the constants in equation \eqref{eq:curv4}.
Similarly, considering the asymptotic behaviour for $t\rightarrow-\infty$, we obtain $\gamma_{2}=\gamma_{3}$. Note that the dominating part in $\mbox{sinh}(2\sqrt{\alpha}(t+d))$ now comes with the minus sign. It follows that $\gamma_{2}=\gamma_{3}=0$, hence, 
\begin{align}
C-c_{1}=0\label{eq:restr2}
\end{align}
as we claimed. Inserting equation \eqref{eq:restr2} in the first formula of \eqref{eq:sol}, the metric component $g_{11}=h$ takes the form \eqref{eq:sol2}.
Lemma \ref{lem:metriccomp} is proven.
\end{proof}
\begin{rem}
If we insert $\gamma_{2}=\gamma_{3}=0$ and $C=c_{1}$ in the formula \eqref{eq:curv4} for the sectional curvature of $E_{A}(\rho)$, we obtain that $K_{E_{A}(\rho)}=\tfrac{\alpha}{D}$ is constant and positive as we claimed.
\end{rem}

\subsection{Proof of Theorem \ref{thm:system}}
\label{sec:systemloc}
Our goal is to prove  Theorem \ref{thm:system}:  we need  to show the local existence of a function $\mu$ and a constant $B$ such that the system \eqref{eq:system} is satisfied.

\begin{lem}
\label{lem:covderiv}
At every point $x\in M$, the  tensor $A$ and the covariant differential $\nabla\Lambda$ are simultaneously diagonalizable in an orthogonal basis. More precisely, let $U\in E_{A}(\rho_{1})$ and $W\in E_{A}(\rho_{2})$ be eigenvectors of $A$ corresponding to the constant eigenvalues. Then we obtain
\begin{align}
\begin{array}{l}
\nabla_{\Lambda}\Lambda=(\dot{\phi}+\phi\psi)\Lambda,\vspace{2mm}\\
\nabla_{\bar{\Lambda}}\Lambda=(\dot{\phi}+\phi\psi)\bar{\Lambda},\vspace{2mm}\\
\nabla_{U}\Lambda=\frac{g(\Lambda,\Lambda)}{\rho-\rho_{1}}U,\vspace{2mm}\\
\nabla_{W}\Lambda=\frac{g(\Lambda,\Lambda)}{\rho-\rho_{2}}W.
\end{array}\label{eq:covderiv2}
\end{align}
The functions $\phi$ and $\psi$ are given by the formulas
\begin{align}
\phi=\frac{1}{2}\frac{\dot{\rho}}{h}\mbox{ and }\psi=\frac{1}{2}\frac{\dot{h}}{h}.\label{eq:mn}
\end{align}
\end{lem}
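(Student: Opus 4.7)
The plan is to exploit systematically that, in the setting of this section, $\Lambda$ is itself a smooth eigenvector field of $A$ with eigenvalue $\rho$ (Lemma \ref{lem:onenonconstant}). Proposition \ref{prop:modulo} applied to $U=\Lambda$, together with $g(\Lambda,\bar\Lambda)=0$, yields
$$(A-\rho\Id)\nabla_X\Lambda=X(\rho)\Lambda-g(\Lambda,X)\Lambda-g(\Lambda,\Lambda)X-g(\Lambda,JX)\bar\Lambda$$
for every tangent vector $X$. For $U\in E_A(\rho_1)$ the right-hand side collapses to $-g(\Lambda,\Lambda)U$, so decomposing $\nabla_U\Lambda$ according to $TM=E_A(\rho_1)\oplus E_A(\rho_2)\oplus E_A(\rho)$ and using that $A-\rho\Id$ acts as the scalar $\rho_i-\rho\ne 0$ on $E_A(\rho_i)$ (on $M_{d\rho\ne0}$) and as zero on $E_A(\rho)$, I read off that the $E_A(\rho_2)$-component vanishes and the $E_A(\rho_1)$-component is exactly $\tfrac{g(\Lambda,\Lambda)}{\rho-\rho_1}U$; symmetrically for $W\in E_A(\rho_2)$. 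Substituting $X=\Lambda$ and using $\Lambda(\rho)=g(\grad\rho,\Lambda)=2g(\Lambda,\Lambda)$ (since $\Lambda=\tfrac12\grad\rho$, $\rho$ being the unique non-constant eigenvalue) makes the right-hand side vanish; similarly for $X=\bar\Lambda$ using $\bar\Lambda(\rho)=0$ and $J\bar\Lambda=-\Lambda$. Hence $\nabla_\Lambda\Lambda$ and $\nabla_{\bar\Lambda}\Lambda$ both lie in $E_A(\rho)$.

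The remaining task is to pin down the $E_A(\rho)$-valued part. Writing $\nabla_\Lambda\Lambda=a\Lambda+b\bar\Lambda$ and $\nabla_{\bar\Lambda}\Lambda=c\Lambda+d\bar\Lambda$, the symmetry of $\nabla\Lambda$ (as the Hessian of $\tfrac\rho2$) gives $b=c$. By Lemma \ref{lem:Killing}, $\bar\Lambda$ is Killing, and since $\nabla J=0$ one has $\nabla_X\bar\Lambda=J\nabla_X\Lambda$; antisymmetry therefore reads
$$g(J\nabla_X\Lambda,Y)+g(X,J\nabla_Y\Lambda)=0\qquad\text{for all }X,Y.$$
Plugging in $X=Y=\Lambda$ and using $J\Lambda=\bar\Lambda$, $J\bar\Lambda=-\Lambda$, $g(\Lambda,\bar\Lambda)=0$, $g(\bar\Lambda,\bar\Lambda)=g(\Lambda,\Lambda)$, this collapses to $-2b\,g(\Lambda,\Lambda)=0$, so $b=c=0$; choosing $X=\Lambda,Y=\bar\Lambda$ then yields $a=d$. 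Hessian symmetry now also kills the $E_A(\rho)$-components of $\nabla_U\Lambda$ and $\nabla_W\Lambda$: $g(\nabla_U\Lambda,\Lambda)=g(U,\nabla_\Lambda\Lambda)=a\,g(U,\Lambda)=0$ and $g(\nabla_U\Lambda,\bar\Lambda)=g(U,\nabla_{\bar\Lambda}\Lambda)=a\,g(U,\bar\Lambda)=0$, and similarly for $W$. This proves the four displayed identities modulo identification of the scalar $a$.

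Finally, to compute $a$ I use the adapted coordinates $(t,x^2,\dots,x^{2n})$ of Lemma \ref{lem:metricinadaptedcoord}, where $\Lambda=\phi\,\partial_t$ and $g(\Lambda,\Lambda)=\phi^2h$. Since $\nabla_\Lambda\Lambda=a\Lambda$,
$$a\,g(\Lambda,\Lambda)=g(\nabla_\Lambda\Lambda,\Lambda)=\tfrac12\Lambda\bigl(g(\Lambda,\Lambda)\bigr)=\tfrac12\phi\,\partial_t(\phi^2h)=\phi^2\dot\phi\,h+\tfrac12\phi^3\dot h,$$
hence $a=\dot\phi+\tfrac{\phi\,\dot h}{2h}=\dot\phi+\phi\psi$, matching \eqref{eq:mn}. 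The densely-defined formulas extend to all of $M$ by continuity (at the critical points of $\rho$ one has $\Lambda=0$, so both sides vanish). The one subtle step is the combination of the symmetry of $\nabla\Lambda$ with the antisymmetry of $J\nabla\Lambda$, which is what forces the $E_A(\rho)$-block of $\nabla\Lambda$ to be a scalar multiple of the identity; once this is in place, everything else is an immediate consequence of Proposition \ref{prop:modulo}.
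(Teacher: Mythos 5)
Your proof is correct, and for the two harder parts of the statement it takes a noticeably different route from the paper's. The paper obtains $\nabla_{\Lambda}\Lambda\parallel\Lambda$ from the total geodesy of $D_{1}=\spann\{\Lambda\}$ (Lemma \ref{lem:integrable}), and for $U\in E_{A}(\rho_{1})$ it applies Proposition \ref{prop:modulo} to the eigenvector field $\bar{\Lambda}$, killing the residual $E_{A}(\rho)$-component $f\bar{\Lambda}+\tilde{f}\Lambda$ of $\nabla_{U}\bar{\Lambda}$ by a Lie-bracket argument (apply $d\rho$ to $[U,\bar{\Lambda}]$ to get $\tilde{f}=0$) and by reading off from the adapted coordinates that $g(\Lambda,\Lambda)$ is constant along $D_{2}$ (to get $f=0$). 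You instead apply Proposition \ref{prop:modulo} once, to $\Lambda$ itself, which handles all four directions uniformly modulo their $E_{A}(\rho)$-components, and then dispose of those components purely algebraically: the self-adjointness of $\nabla\Lambda=\tfrac{1}{2}\nabla\grad\,\rho$ combined with the antisymmetry of $g(\nabla_{X}\bar{\Lambda},Y)=g(J\nabla_{X}\Lambda,Y)$ forces the $E_{A}(\rho)$-block of $\nabla\Lambda$ to be a scalar multiple of the identity and makes the mixed terms $g(\nabla_{U}\Lambda,\Lambda)=g(U,\nabla_{\Lambda}\Lambda)$ and $g(\nabla_{U}\Lambda,\bar{\Lambda})=g(U,\nabla_{\bar{\Lambda}}\Lambda)$ vanish. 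This replaces both the total-geodesy input and the Lie-bracket/coordinate computation; the explicit metric of Lemma \ref{lem:metricinadaptedcoord} is needed only at the very end, to identify the scalar as $\dot{\phi}+\phi\psi$ via $g(\nabla_{\Lambda}\Lambda,\Lambda)=\tfrac{1}{2}\Lambda(g(\Lambda,\Lambda))$. The only caveat --- shared with the paper's own proof --- is that the whole argument lives on $M_{d\rho\neq0}$, where $\rho\neq\rho_{1},\rho_{2}$ and $\Lambda\neq0$; your closing remark about extending to the critical points of $\rho$ by continuity is not needed for the way the lemma is used later (only on $M_{d\rho\neq0}$) and is slightly delicate as stated, since $g(\Lambda,\Lambda)/(\rho-\rho_{1})$ becomes an indeterminate $0/0$ at points where $\rho$ attains the value $\rho_{1}$.
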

\begin{proof}
Since the distribution $D_{1}$ has totally geodesic integral manifolds (see Lemma \ref{lem:integrable}), $\nabla_{v_{1}}v_{1}$ is proportional to $v_{1}$. Let us define two functions $\phi$ and $\psi$ by setting
\begin{align}
\Lambda=:\phi v_{1}\mbox{ and }\nabla_{v_{1}}v_{1}=:\psi v_{1}.\label{eq:coeff}
\end{align}
It follows immediately that $g(\Lambda,\Lambda)=\phi^{2}h$. On the other hand, $\dot{h}=2g(\nabla_{v_{1}}v_{1},v_{1})=2\psi h$. Using the equations \eqref{eq:sol} in Lemma \ref{lem:metricinadaptedcoord}, we obtain
\begin{align}
\phi=\frac{1}{2}\frac{\dot{\rho}}{h}\mbox{ and }\psi=\frac{1}{2}\frac{\dot{h}}{h}.\label{eq:coeff2}
\end{align}
Note that the function $\phi$ has to be negative since $\rho$ decreases along the flow-lines of $v_{1}$ while it increases along the flow-lines of $\Lambda=\frac{1}{2}\grad\,\rho$. By direct  calculation, we obtain 
\begin{align}
\nabla_{\Lambda}\Lambda=\phi\nabla_{v_{1}}(\phi v_{1})=\phi \dot{\phi}v_{1}+\phi^{2}\nabla_{v_{1}}v_{1}=(\phi\dot{\phi}+\phi^{2}\psi)v_{1}=(\dot{\phi}+\phi\psi)\Lambda.\nonumber
\end{align}
From the equation above, the relation $\bar{\Lambda}=J\Lambda$ and the fact that $\Lambda$ is a holomorphic vector field, we immediately obtain
\begin{align}
\nabla_{\bar{\Lambda}}\Lambda=J\nabla_{\Lambda}\Lambda=(\dot{\phi}+\phi\psi)\bar{\Lambda}\nonumber
\end{align}
and hence, the first two equations in \eqref{eq:covderiv2} are proven.\\
Now let $U\in E_{A}(\rho_{1})$ be an eigenvector of $A$ corresponding to the constant eigenvalue $\rho_{1}$.
Using Proposition \ref{prop:modulo}, we obtain
\begin{align}
\nabla_{U}\bar{\Lambda}=-\frac{g(\Lambda,\Lambda)}{\rho_{1}-\rho}JU+f\bar{\Lambda}+\tilde{f}\Lambda\mbox{ and }\nabla_{\bar{\Lambda}}U=0\mbox{ mod }E_{A}(\rho_{1})\label{eq:covderiv}
\end{align}
for some functions $f$ and $\tilde{f}$. The lie bracket of $U$ and $\bar{\Lambda}$ is given by
\begin{align}
[U,\bar{\Lambda}]=f\bar{\Lambda}+\tilde{f}\Lambda\mbox{ mod }E_{A}(\rho_{1}).\nonumber
\end{align}
Applying $d\rho$ to both sides of the equation above yields $\tilde{f}\Lambda(\rho)=0$. Since $\Lambda(\rho)\neq 0$ on $M_{d\rho\neq0}$, it follows that $\tilde{f}=0$. On the other hand, the first equation in \eqref{eq:covderiv} shows that
\begin{align}
\frac{1}{2}U(g(\Lambda,\Lambda))=g(\nabla_{U}\bar{\Lambda},\bar{\Lambda})=fg(\Lambda,\Lambda).\nonumber
\end{align}
Since $dg(\Lambda,\Lambda)$ is zero when restricted to the distribution $D_{2}$ (as can be seen by using the coordinates given in Lemma \ref{lem:metricinadaptedcoord}), the left-hand side of the equation above vanishes and hence, $f=0$. Inserting $f=\tilde{f}=0$ in the first equation of \eqref{eq:covderiv}, we obtain the third equation in \eqref{eq:covderiv2}. If we replace $\rho_{1}$ and $U$ by $\rho_{2}$ and $W\in E_{A}(\rho_{2})$, the same arguments can be applied to obtain the last equation in \eqref{eq:covderiv2}. Lemma \ref{lem:covderiv} is proven.
\end{proof}
Let $(M,g,J)$ be a closed, connected Riemannian K\"ahler manifold of real dimension $2n\geq 4$ and of degree of mobility $D(g)=2$. Let $v$ be an essential $h$-projective vector field and $t_{0}$ a real number, such that $\bar{g}=(\Phi^{v}_{t_{0}})^{*}g$ is not already affinely equivalent to $g$. Let us denote by $A=A(g,\bar{g})$ the corresponding solution of equation (\ref{f}).\\
We want to show that  any point of $M_{d\rho\neq0}$  has a small neighborhood such that 
in this neighborhood  there exist  a function $\mu$ and a constant $B<0$  such that the covariant differential $\nabla\Lambda$ satisfies the second equation 
\begin{align}
\nabla\Lambda=\mu \Id+B A\label{eq:system2}
\end{align}
in \eqref{eq:system}.
By  Lemma \ref{lem:covderiv}, at every point of  $M_{d\rho\neq0}$, each eigenvector of $A$ is an eigenvector of $\nabla\Lambda$. Since $A$ has (at most) three different eigenvalues, equation \eqref{eq:system2} is equivalent to an inhomogeneous linear system of three equations on the two unknown real numbers $\mu$ and $B$. Using formulas  \eqref{eq:covderiv2} from Lemma \ref{lem:covderiv},  we see that for $x\in M_{d\rho\neq0}$, $\nabla\Lambda$ satisfies equation \eqref{eq:system2} for some numbers $\mu$ and $B$, if and only if the inhomogeneous linear system of equations
\begin{align}
\begin{array}{l}
\mu+\rho B=\dot{\phi}+\phi\psi,\vspace{2mm}\\
\mu+\rho_1 B=\frac{g(\Lambda,\Lambda)}{\rho-\rho_1},\vspace{2mm}\\
\mu+\rho_2 B=\frac{g(\Lambda,\Lambda)}{\rho-\rho_2}.\vspace{2mm}
\end{array}\label{eq:linsys}
\end{align}
is satisfied. Now,  according to Lemma \ref{lem:metricinadaptedcoord} and Lemma \ref{lem:metriccomp}, in a neighborhood of a point of $M_{d\rho\neq0}$, the functions $\rho$, $g(\Lambda,\Lambda)$, $h$, $\phi$ and $\psi$ are given explicitly by  \eqref{eq:sol}, \eqref{eq:sol2} and \eqref{eq:mn}. Let us insert these functions and the formulas $-\frac{c_{1}}{2}\pm\sqrt{\alpha}$ for the constant eigenvalues $\rho_{1}<\rho_{2}$ (see Lemma \ref{lem:spectrumD=2}) in \eqref{eq:linsys}. After a straight-forward calculation, we obtain  that  \eqref{eq:linsys} is satisfied for  
\begin{align}
\mu=-\frac{\alpha(\frac{c_{1}}{2}-\sqrt{\alpha}\,\mbox{tanh}(\sqrt{\alpha}(t+d)))}{4D}=B(c_{1}+\rho)\mbox{ and }B=-\frac{\alpha}{4D}.\label{eq:solextsys}
\end{align}
We see also  that the constant $B$ is  negative  (as we claimed in Section \ref{sec:scheme}).

Using the equation $\lambda=\frac{1}{4}\trace\,A=\frac{1}{2}\rho+\mbox{const}$, we obtain that $\mu$ given by \eqref{eq:solextsys} satisfies $d\mu=Bd\rho=2Bd\lambda$. Since $\Lambda$ is the gradient of $\lambda$, this is easily seen to be equivalent to the third equation in the system \eqref{eq:system}.

We have shown that in a neighborhood of almost every point of $M$, there exists  a  smooth 
function $\mu$ and a constant $B<0$, such that the system \eqref{eq:system} is satisfied for the triple $(A,\Lambda,\mu)$.

If $\tilde{A}$ is another element in $\mbox{Sol}(g)$ with the corresponding vector field $\tilde{\Lambda}$, then $\tilde{A}=aA+b\Id$ for some $a,b\in \mathbb{R}$ implying $ \tilde \Lambda= a \Lambda$. By direct calculations we see that for an appropriate local function $\tilde{\mu}$ the triple  $(\tilde{A},\tilde{\Lambda},\tilde{\mu})$ satisfies the system \eqref{eq:system} for  the same constant $\tilde{B}=B$. Finally, Theorem \ref{thm:system} is proven.

\section{Final step in the proof of Theorem \ref{thm:obata}}
\label{sec:obata}
As we explained in Section \ref{sec:scheme}, it is sufficient to prove Theorem \ref{thm:obata} under the additional assumption that the  degree of mobility is equal to two. By  Theorem \ref{thm:system}, for every   $A\in \mbox{Sol}(g)$ with corresponding vector field $\Lambda=\tfrac{1}{4}\grad\,\trace\,A$, in a neighborhood $U(x)$ of almost every point $x\in M$, there exists  a local function $\mu:U(x)\rightarrow \mathbb{R}$ and a negative constant $B$   such that the triple $(A,\Lambda,\mu)$ satisfies the system \eqref{eq:system}.

Now, 
in   \cite[\S2.5]{FKMR} it was shown that under these assumptions  the constant $B$  is the same for all  such neighborhoods,    implying that the system  \eqref{eq:system}  is satisfied on the whole $M$ (for a certain smooth function 
  $\mu:M\to \mathbb{R}$). Note that in view of the third equation of \eqref{eq:system}, $\mu$ is not a constant (if $A$ is chosen to be non-proportional to the identity on $TM$).

By direct calculation (differentiating $\mu$ and replacing  the derivatives using the system \eqref{eq:system}), we 
 obtain
\begin{align}
(\nabla\nabla\mu)(Y,Z)&=\nabla_{Y}(\nabla_{Z}\mu)-\nabla_{\nabla_{Y}Z}\mu\overset{\mbox{\tiny eq. $3$ of }\eqref{eq:system}}{=}2Bg(Z,\nabla_{Y}\Lambda)\nonumber\\&\overset{\mbox{\tiny eq. $2$ of }\eqref{eq:system}}{=}2B(\mu g(Y,Z)+Bg(AY,Z)).\nonumber
\end{align}
Then, 
\begin{align}
\begin{array}{c}
(\nabla\nabla\nabla\mu)(X,Y,Z)=2B((\nabla_{X}\mu)g(Y,Z)+Bg((\nabla_{X}A)Y,Z))\vspace{2mm}\\
\overset{\mbox{\tiny eq. $1$ of }\eqref{eq:system}}{=}B(2(\nabla_{X}\mu)g(Y,Z)+2Bg(Z,\Lambda)g(X,Y)+2Bg(Y,\Lambda)g(X,Z)\vspace{2mm}\\+2Bg(Z,\bar{\Lambda})g(JX,Y)+2Bg(Y,\bar{\Lambda})g(JX,Z)).\nonumber
\end{array}
\end{align}
Inserting the third equation of \eqref{eq:system}, we obtain  that $\mu$ satisfies the equation
\begin{align}
\begin{array}{c}
(\nabla\nabla\nabla\mu)(X,Y,Z)=B[2(\nabla_{X}\mu)g(Y,Z)+(\nabla_{Z}\mu)g(X,Y)+(\nabla_{Y}\mu)g(X,Z)\vspace{2mm}\\-(\nabla_{JZ}\mu)g(JX,Y)-(\nabla_{JY}\mu)g(JX,Z)]
\end{array}
\label{eq:tanno}
\end{align}
for all $X,Y,Z\in TM$.\\
Now  by   \cite[Theorem 10.1]{Tanno1978}, the existence of a non-constant solution of equation \eqref{eq:tanno} with $B<0$ on a closed, connected Riemannian K\"ahler manifold implies that the manifold has positive constant holomorphic sectional curvature equal to $-4B$. Then,  $(M,-4Bg,J)$ can be covered by $(\mathbb{C}P(n),g_{FS},J)$.  Theorem \ref{thm:obata} is proven.

\subsection*{\bf Acknowledgements.} We thank N. Hitchin for noticing  that $h$-projectively equivalent metrics and  hamiltonian $2$-forms are essentially the same object. This work benefited from discussions with  D. Calderbank and 
 C. Tønnesen-Friedman. We  thank    Deutsche Forschungsgemeinschaft (Research training group   1523 --- Quantum and Gravitational Fields)   and FSU Jena for partial financial support.

\begin{appendix}
\section{$H$-projectively invariant formulation of the main equation \eqref{f}}
\label{app}
\subsection{$H$-projective structure}
Let $(M,J)$ be a complex manifold of real dimension $2n$. Note that the defining equation \eqref{eq:eq1} for $h$-planar curves only involves the connection - it does not depend on the metric.
\begin{defn}
Two symmetric complex (i.e., $DJ=\bar{D}J=0$) affine connections $D$ and $\bar{D}$ are called \emph{$h$-projectively equivalent} if each $h$-planar curve with respect to $D$ is $h$-planar with respect to $\bar{D}$ and vice versa.
\end{defn}
It is a classical result (see for example \cite{Otsuki1954,Tashiro1956}) that two symmetric complex affine connections $D$ and $\bar{D}$ are $h$-projectively equivalent if and only if for a certain $1$-form $\Phi$ we have
\begin{align}
\bar{D}_{X}Y-D_{X}Y=\Phi(Y)X+\Phi(X)Y-\Phi(JY)JX-\Phi(JX)JY\label{eq:hproconn}
\end{align}
for all vector fields $X,Y$.
\begin{rem}
\label{rem:kaehlerconn}
If the symmetric affine connections $D$ and $\bar{D}$ are related by \eqref{eq:hproconn} and $DJ=0$, then $\bar{D}J=0$ as well.
\end{rem}
\begin{defn}
An \emph{$h$-projective structure} on $(M,J)$ is an equivalence class $[D]$ of $h$-projectively equivalent symmetric complex affine connections.
\end{defn}
\subsection{$H$-projectively invariant version of equation \eqref{f}}
Let $(M,J)$ be a complex manifold of real dimension $2n$. Denote by $\wedge^{2n}:=\wedge^{2n}T^{*}M$ the bundle of $2n$-forms on $M$. Note that it is a trivial line bundle since the complex manifold $(M,J)$ is always orientable. The bundle $(\wedge^{2n})^{\frac{w}{2(n+1)}}$ of $2n$-forms of "$h$-projective weight" $w$ is an one-dimensional bundle whose   transition functions  are the transition functions of 
 $\wedge^{2n}$ (which can be chosen to have positive values) to  the power $\tfrac{w}{2(n+1)}$. Let us consider the bundle $S^{2}_{J}TM$ of symmetric hermitian (with respect to $J$) $(2,0)$-tensors and define its "weighted" version $S_{J}^{2}TM(w)$  by 
$$S_{J}^{2}TM(w):=S_{J}^{2}TM\otimes(\wedge^{2n})^{\frac{w}{2(n+1)}}.$$ 
For each choice of local coordinates $x^{1},...,x^{2n}$, the local section $dx^{1}\wedge...\wedge dx^{2n}$ of $\wedge^{2n}$ gives us a trivialization for $(\wedge^{2n})^{\frac{w}{2(n+1)}}$. Then, we can think that  a section $\sigma\in \Gamma(S_{J}^{2}TM(w))$ is  a symmetric hermitian $2n\times 2n-$matrix with components $\sigma^{ij}=\sigma^{ij}(x^{1},...,x^{2n})$.  If we make an orientation-preserving change of coordinates $x^{1},...,x^{2n}\longmapsto \tilde{x}^{1},...,\tilde{x}^{2n}$, the components $\sigma^{ij}$ transform according to the rule 
\begin{align}
\tilde{\sigma}^{ij}=\left(\mathrm{det}\,\left(\frac{\partial\,\tilde{x}^{k}}{\partial\,x^{l}}\right)\right)^{-\frac{w}{2(n+1)}}\frac{\partial\, \tilde{x}^{i}}{\partial\,x^{p}}\frac{\partial\, \tilde{x}^{j}}{\partial\,x^{q}}\sigma^{pq}.\label{eq:trafolaw3}
\end{align}
The covariant derivative of  $\sigma\in\Gamma(S_{J}^{2}TM(w))$ with respect to a symmetric  affine connection $D$ is  given by
\begin{align}
D_{k}\sigma^{ij}=\underbrace{\partial_{k}\sigma^{ij}+\Gamma^{i}_{kl}\sigma^{lj}+\Gamma^{j}_{kl}\sigma^{il}}_{\mbox{\tiny usual covariant derivative for $2$-tensors}}\underbrace{-\frac{w}{2(n+1)}\Gamma^{l}_{kl}\sigma^{ij}}_{\begin{array}{c}\mbox{\tiny addition corresponding to}\\\mbox{\tiny $2n$-forms of weight $w$}\end{array}},\label{eq:inducedconn}
\end{align}
where $\Gamma^{i}_{jk}$ are the Christoffel symbols of $D$.
\begin{thm}
\label{thm:invariant}
Let $\sigma$ be an element of $\Gamma(S_{J}^{2}TM(2))$. Consider the equation 
\begin{align}
D_{k}\sigma^{ij}-\frac{1}{2n}(\delta^{i}_{k}D_{l}\sigma^{l j}+\delta^{j}_{k}D_{l}\sigma^{l i}+J^{i}_{k}J^{j}_{m}D_{l}\sigma^{lm}+J^{j}_{k}J^{i}_{m}D_{l}\sigma^{lm})=0.\label{eq:weightedmain}
\end{align}
Then, the following holds:
\begin{enumerate}
\item Equation \eqref{eq:weightedmain} is $h$-projectively invariant, i.e., independent of the connection $D\in[D]$.
\item Equation \eqref{eq:weightedmain} has a non-degenerate solution $\sigma$ (where non-degeneracy means that the matrix of components $(\sigma^{ij})$ is invertible everywhere), if and only if there is a connection $\nabla\in[D]$, such that $\nabla$ is the Levi-Civita connection of some Kähler metric.
\end{enumerate}
\end{thm}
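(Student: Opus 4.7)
The plan for part (1) is a direct computation. If $D$ and $\bar D$ are $h$-projectively equivalent connections related by a $1$-form $\Phi$ via \eqref{eq:hproconn}, then at the level of Christoffel symbols
\begin{equation*}
\bar\Gamma^i_{kl} - \Gamma^i_{kl} = \delta^i_k \Phi_l + \delta^i_l \Phi_k - J^i_k \bar\Phi_l - J^i_l \bar\Phi_k,
\end{equation*}
with $\bar\Phi_l := \Phi_m J^m_l$, and tracing over $i=k$ gives $\bar\Gamma^p_{pl} - \Gamma^p_{pl} = 2(n+1)\Phi_l$. I would feed these into \eqref{eq:inducedconn} with $w=2$, use the hermitian identity $J^i_l\sigma^{lj} = -J^j_l\sigma^{il}$ to convert the two terms proportional to $\bar\Phi_k$ into the desired $J$-form, and observe that the weight correction contributes exactly $-2\Phi_k\sigma^{ij}$ which cancels the two bare $+\Phi_k\sigma^{ij}$ pieces. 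What remains is
\begin{equation*}
\bar D_k \sigma^{ij} - D_k \sigma^{ij} = \delta^i_k V^j + \delta^j_k V^i + J^i_k J^j_m V^m + J^j_k J^i_m V^m, \qquad V^j := \Phi_l \sigma^{lj}.
\end{equation*}
Tracing over $i=k$ and using $J^i_i = 0$ together with $J^j_i J^i_m = -\delta^j_m$ yields $\bar T^j = T^j + 2n V^j$ for $T^j := D_l\sigma^{lj}$, and substituting back shows that the right-hand side of \eqref{eq:weightedmain} computed with $\bar D$ differs from the one computed with $D$ by the same expression. Hence \eqref{eq:weightedmain} is $h$-projectively invariant.

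For the ``if'' direction of part (2), I would start with a K\"ahler metric $g$ whose Levi-Civita connection $\nabla$ lies in $[D]$ and set $\sigma^{ij} := (\det g_{kl})^{1/(2(n+1))} g^{ij}$. A check of Jacobian factors shows this is a genuine section of $S^2_J TM(2)$, and since $\nabla$ preserves both $g$ and any power of the volume density (a short computation using $\Gamma^l_{kl} = \tfrac{1}{2}g^{ij}\partial_k g_{ij}$ gives $\nabla(\det g)^{1/(2(n+1))} = 0$ in the weighted sense), the weighted covariant derivative from \eqref{eq:inducedconn} yields $\nabla\sigma = 0$. Thus \eqref{eq:weightedmain} is trivially satisfied for $D = \nabla$ and, by part (1), for every $D\in[\nabla]=[D]$; non-degeneracy of $\sigma$ is inherited from $g^{ij}$.

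For the ``only if'' direction, fix any $D\in[D]$ and let $T^j := D_l\sigma^{lj}$. Because $\sigma$ is non-degenerate, I can define the $1$-form
\begin{equation*}
\Phi_l := -\tfrac{1}{2n}(\sigma^{-1})_{jl}T^j,
\end{equation*}
where the weights $-2$ and $+2$ of $(\sigma^{-1})_{jl}$ and $T^j$ compensate so that $\Phi$ is an honest $1$-form. Letting $\bar D\in[D]$ be the connection obtained from $D$ through $\Phi$ via \eqref{eq:hproconn}, I plug $V^j = \Phi_l\sigma^{lj} = -\tfrac{1}{2n}T^j$ into the invariance computation of part (1) and compare with \eqref{eq:weightedmain} to conclude $\bar D\sigma = 0$. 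I then introduce $g^{ij} := (\det\sigma)^{1/2}\sigma^{ij}$; tracking the Jacobian contributions shows this is weight-$0$ (i.e., a genuine $(2,0)$-tensor), and it inherits symmetry, $J$-invariance and non-degeneracy from $\sigma$. The identity $\bar D_k\det\sigma = \det\sigma \cdot (\sigma^{-1})_{ij}\bar D_k\sigma^{ij}$ (which holds once all weight corrections are included, essentially because $\trace(\sigma^{-1}\delta\sigma) = \delta\log\det\sigma$) implies $\bar D\det\sigma = 0$ and hence $\bar D g^{ij} = 0$. Since $\bar D$ is torsion-free and complex (Remark \ref{rem:kaehlerconn}) and preserves the non-degenerate hermitian $g$, it is the Levi-Civita connection of $g$, and $g$ is K\"ahler with respect to $J$.

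The main obstacle I expect is the weight bookkeeping: one must verify carefully that $(\det\sigma)^{1/2}\sigma^{ij}$ and $(\sigma^{-1})_{jl}T^j$ have the claimed transformation behavior, and that the exponent $1/2$ is precisely what guarantees $\bar D\sigma = 0 \Rightarrow \bar D g = 0$. By contrast, the algebraic cancellations in part (1) are short once the hermitian identity $J^i_l\sigma^{lj} = -J^j_l\sigma^{il}$ and the trace computation $\trace(\bar\Gamma-\Gamma) = 2(n+1)\Phi$ are in hand, and the ``only if'' half of part (2) ultimately reduces to the observation that \eqref{eq:weightedmain} is just the statement that some $\bar D\in[D]$ kills $\sigma$.
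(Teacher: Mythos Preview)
Your proposal is correct and follows essentially the same route as the paper's own proof: the same transformation law $\bar D_k\sigma^{ij}=D_k\sigma^{ij}+\delta^i_k V^j+\delta^j_k V^i+J^i_k J^j_m V^m+J^j_k J^i_m V^m$ for part~(1), the same weighted section $\sigma=g^{-1}\otimes(\mathrm{vol}_g)^{1/(n+1)}$ for the ``if'' half of~(2), and the same choice $\Phi_l=-\tfrac{1}{2n}\sigma_{lm}D_p\sigma^{pm}$ to force $\bar D_l\sigma^{lj}=0$ and hence $\bar D\sigma=0$ for the ``only if'' half. The only cosmetic difference is that the paper writes $|\det(\sigma^{ij})|^{1/2}$ rather than $(\det\sigma)^{1/2}$ when defining $g^{ij}$, which you may want to adopt to avoid sign issues.
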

\begin{rem}
We do not pretend that Theorem \ref{thm:invariant} is new;  it was known to D. Calderbank (private communication), and is  completely  analogous to the corresponding statement for  projective structures   treated in \cite{EastMat}.
\end{rem}
\begin{rem}  For  a nondegenerate  solution $\sigma$ of  \eqref{eq:weightedmain}, 
the metric $g$ given by formula \eqref{petrunin} below is   a Kähler metric  that is $h$-projectively equivalent to the connection $D$.  
\end{rem}

\begin{proof}
$(1)$ The condition \eqref{eq:hproconn} for the $h$-projective equivalence of the connections $D$ and $\bar{D}$ can be rewritten locally as
\begin{align}
\bar{\Gamma}^{i}_{jk}-\Gamma^{i}_{jk}=\delta^{i}_{j}\Phi_{k}+\delta^{i}_{k}\Phi_{j}-J^{i}_{j}J^{l}_{k}\Phi_{l}-J^{i}_{k}J^{l}_{j}\Phi_{l},\label{eq:hproconncoord}
\end{align}
where $\Gamma^{i}_{jk}$ and $\bar{\Gamma}^{i}_{jk}$ are the Christoffel symbols of $D$ and $\bar{D}$ respectively. 
Combining the equations \eqref{eq:hproconncoord} and \eqref{eq:inducedconn}, we can calculate the difference between the connections $D$ and $\bar{D}$ when they are acting on $\sigma\in\Gamma(S^{2}_{J}TM(2))$. We obtain 
\begin{align}
\bar{D}_{k}\sigma^{ij}=D_{k}\sigma^{ij}+\delta^{i}_{k}\Phi_{l}\sigma^{l j}+\delta^{j}_{k}\Phi_{l}\sigma^{il}+J^{i}_{k}J^{j}_{m}\Phi_{l}\sigma^{lm}+J^{j}_{k}J^{i}_{m}\Phi_{l}\sigma^{lm},\label{eq:trafolaw1}
\end{align}  
and in particular,
\begin{align}
\bar{D}_{l}\sigma^{l j}=D_{l}\sigma^{l j}+2n\Phi_{l}\sigma^{l j}.\label{eq:trafolaw2}
\end{align}
Replacing $D$ with $\bar{D}$ in equation \eqref{eq:weightedmain} and inserting the transformation laws \eqref{eq:trafolaw1} and \eqref{eq:trafolaw2}, we obtain that \eqref{eq:weightedmain} remains unchanged if $D$ is replaced by $\bar{D}\in[D]$.

$(2)$ In one direction, $(2)$ is trivial. Suppose that $g$ is a Kähler metric that is $h$-projectively equivalent to $D$. Let us denote by $g^{-1}\in\Gamma(S^{2}_{J}TM)$ the dual of $g$ (i.e., $g\circ g^{-1}=Id$). We consider the non-degenerate element
$$\sigma=g^{-1}\otimes(\mathrm{vol}_{g})^{\frac{1}{n+1}}\in \Gamma({S^{2}_{J}TM(2)}).$$
Evidently, $\nabla\sigma=0$, where $\nabla$ is the Levi-Civita connection of $g$. By the first part of Theorem \ref{thm:invariant}, $D$ can be replaced with $\nabla$ in \eqref{eq:weightedmain} and we obtain that $\sigma$ is a solution of \eqref{eq:weightedmain}.

Let us proof $(2)$ in the opposite direction. Let $\sigma\in\Gamma(S^{2}_{J}TM(2))$ be a non-degenerate solution of \eqref{eq:weightedmain}. Using the transformation law \eqref{eq:trafolaw3}, it is easy to see that 
\begin{equation} \label{petrunin} g^{ij}=\sigma^{ij}|\mathrm{det}\,(\sigma^{ij})|^{\frac{1}{2}}\end{equation}
defines the components of a symmetric, hermitian (with respect to $J$) $(2,0)$-tensor. Thus the corresponding dual $(0,2)$-tensor $g$ is a hermitian metric. Note that $\sigma$ and $g$ are related by
$$\sigma=g^{-1}\otimes(\mathrm{vol}_{g})^{\frac{1}{n+1}}.$$
It remains to show that the Levi-Civita connection of $g$ is contained in $[D]$.  We consider a connection $\bar{D}\in[D]$ related to $D$ by \eqref{eq:hproconncoord} such that
\begin{align}
\Phi_{i}=-\tfrac{1}{2n}\sigma_{im}D_{l}\sigma^{lm},\label{eq:specialphi} 
\end{align}
where the components $\sigma_{ij}$ are defined by $\sigma^{i p }\sigma_{p j}=\delta^{i}_{j}$. Substituting \eqref{eq:specialphi} in equation \eqref{eq:trafolaw2} shows that
\begin{align}
\bar{D}_{l}\sigma^{lj}=0.\label{eq:vanishingtrace}
\end{align}
Replacing $D$ with $\bar{D}$ in \eqref{eq:weightedmain} and substituting \eqref{eq:vanishingtrace}, we obtain $\bar{D}_{k}\sigma^{ij}=0$. Thus, $\bar{D}$ is the Levi-Civita connection of $g$. By Remark \ref{rem:kaehlerconn}, $\bar{D}$ satisfies $\bar{D}J=0$ which implies that $g$ is indeed a Kähler metric.
\end{proof}

\begin{defn}
Let $[D]$ be an $h$-projective structure on the complex manifold $(M,J)$. We denote by $\mbox{Sol}([D])\subseteq\Gamma(S^{2}_{J}TM(2))$ the linear space of solutions of equation \eqref{eq:weightedmain}.
\end{defn}

\subsection{An alternative proof of Lemma \ref{lem:liederivA}}
Let $(M,g,J)$ be a Kähler manifold and let $\nabla$ be the Levi-Civita connection of $g$. We assume that the degree of mobility (see Definition \ref{def:deg}) is equal to two. Clearly, we have that $\mbox{Sol}([\nabla])$ is $2$-dimensional. Suppose that the Kähler metric $\bar{g}$ is non-proportional and $h$-projectively equivalent to $g$ and consider the corresponding elements
\begin{align}
\sigma=g^{-1}\otimes(\mathrm{vol}_{g})^{\frac{1}{n+1}}\mbox{ and }\bar{\sigma}=\bar{g}^{-1}\otimes(\mathrm{vol}_{\bar{g}})^{\frac{1}{n+1}},\label{eq:basisforsol}
\end{align}
of $\mbox{Sol}([\nabla])$. Since $g$ and $\bar{g}$ are non-proportional, $\sigma$ and $\bar{\sigma}$ form a basis for $\mbox{Sol}([\nabla])$.

Now let $v$ be an $h$-projective vector field for $(M,g,J)$ (see Definition \ref{def:hprotrafo}). Thus, the Lie derivative $\mathcal{L}_{v}$ maps solutions of \eqref{eq:weightedmain} to solutions of \eqref{eq:weightedmain} and, hence, restricts to an endomorphism of the $2$-dimensional vector space $\mbox{Sol}([\nabla])$. With respect to the basis $\sigma,\bar{\sigma}$ of $\mbox{Sol}([\nabla])$, the endomorphism $\mathcal{L}_{v}$ is given by
\begin{align}
\begin{array}{c}
\mathcal{L}_{v}\sigma=\kappa_{11}\sigma+\kappa_{12}\bar{\sigma},\\
\mathcal{L}_{v}\bar{\sigma}=\kappa_{21}\sigma+\kappa_{22}\bar{\sigma}.\end{array}\label{eq:matrix}
\end{align}
for some real numbers $\kappa_{11},\kappa_{12},\kappa_{21},\kappa_{22}$.

Consider the $(1,1)$-tensor $A:=\bar{\sigma}\sigma^{-1}$. Combining \eqref{eq:basisforsol} with equation \eqref{eq:a}, we see that $A$ coincides with $A(g,\bar{g})$. We calculate
$$\mathcal{L}_{v}A=(\mathcal{L}_{v}\bar{\sigma})\sigma^{-1}+\bar{\sigma}(\mathcal{L}_{v}\sigma^{-1})=(\mathcal{L}_{v}\bar{\sigma})\sigma^{-1}-\bar{\sigma}\sigma^{-1}(\mathcal{L}_{v}\sigma)\sigma^{-1}.$$
Substituting \eqref{eq:matrix}, we obtain
\begin{align}
\mathcal{L}_{v}A&=(\kappa_{21}\sigma+\kappa_{22}\bar{\sigma})\sigma^{-1}-\bar{\sigma}\sigma^{-1}(\kappa_{11}\sigma+\kappa_{12}\bar{\sigma})\sigma^{-1}\nonumber\\
&=\kappa_{21}Id+(\kappa_{22}-\kappa_{11})A-\kappa_{12}A^{2}.\nonumber
\end{align}
Hence, 
$$\mathcal{L}_{v}A=c_{2}A^{2}+c_{1}A+c_{0}Id$$
for some constants $c_{2},c_{1},c_{0}$. This is the assertion of Lemma \ref{lem:liederivA}. 
\end{appendix}

\nocite{*}
\bibliographystyle{plain}

\begin{thebibliography}{99}


\bibitem{Akbar} H. Akbar-Zadeh, {\it Transformations holomorphiquement
    projectives des vari\'et\'es hermitiennes et k\"ahl\'eriennes,}
  J. Math. Pures Appl. (9) {\bf 67}, no. 3, 237--261, 1988

\bibitem{Alekseevskii1972} D. V. Alekseevsky, \emph{Groups of
    conformal transformations of Riemannian spaces}, Math. USSR
  Sbornik, {\bf  18}, 285--301, 1972


\bibitem{ApostolovI} V. Apostolov, D. Calderbank, P. Gauduchon, 
\emph{Hamiltonian 2-forms in Kähler geometry. I. General theory}, J. Differential Geom.  {\bf 73},  no. 3, 359--412, 2006

\bibitem{ApostolovII} V. Apostolov, D. Calderbank, P. Gauduchon, C. T\o nnesen-Friedman, 
\emph{Hamiltonian 2-forms in Kähler geometry. II. Global classification},  J. Differential Geom. {\bf 68},  no. 2, 277--345, 2004

\bibitem{ApostolovIII} V. Apostolov, D. Calderbank, P. Gauduchon, C. T\o nnesen-Friedman \emph{Hamiltonian 2-forms in Kähler geometry. III. Extremal metrics and stability},
Invent. Math.  {\bf 173},  no. 3, 547--601, 2008

\bibitem{ApostolovIV} V. Apostolov, D. Calderbank, P. Gauduchon, C. T\o nnesen-Friedman, \emph{Hamiltonian 2-forms in Kähler geometry. IV. Weakly Bochner-flat Kähler manifolds},  Comm. Anal. Geom.  {\bf 16}, no. 1, 91--126, 2008



\bibitem{Beltrami} E. Beltrami, \emph{Risoluzione del problema:
    riportare i punti di una superficie sopra un piano in modo che le
    linee geodetische vengano rappresentante da linee rette}, Ann. di
  Mat., {\bf 1}, no. 7, 185--204, 1865
  
\bibitem{BM2003} A. V. Bolsinov, V. S. Matveev, \emph{Geometrical
   interpretation of Benenti's systems}, J. of Geometry and Physics
 {\bf 44}, no. 4, 489--506, 2003
 



\bibitem{splitting} A. V. Bolsinov, V. S.  Matveev,
{\it Splitting and gluing lemmas for geodesically equivalent pseudo-Riemannian metrics,} Trans. Amer. Math. Soc.
{\bf 363}(2011) no. 8, 4081--4107,	arXiv:math.DG/0904.0535.


\bibitem{Dini} U. Dini, \emph{ Sopra un problema che si presenta nella teoria
generale delle rappresentazioni geografiche di una superficie su un'altra},
Ann. Mat., ser.2, {\bf 3}(1869), 269--293.

\bibitem{EastMat} M. Eastwood, V. S. Matveev, \emph{ Metric Connections in Projective Differential Geometry},
Symmetries and overdetermined systems of partial differential equations, IMA Vol. Math. Appl., 144, Springer, New York, 339--350, 2008  

   
  

\bibitem{FKMR} A. Fedorova, V. Kiosak, V. Matveev, S. Rosemann, {\it Every closed K\"ahler manifold with degree of mobility $\ge 3$ is $(CP(n), g_{Fubini-Study})$}, arXiv:1009.5530v1 [math.DG], 2010

\bibitem{Ferrand} J. Ferrand,  {\it Action du groupe conforme sur une variete riemannienne.}, C. R. Acad. Sci. Paris Ser. I Math. {\bf 318}, no. 4, 347--350, 1994
\bibitem{Fubini1} G. Fubini, {\it Sui gruppi trasformazioni geodetiche, }
Mem.
Acc. Torino (29 {\bf 53}, 261--313, 1903
\bibitem{Fujimura} S. Fujimura, {\it Indefinite K\"ahler metrics of
   constant holomorphic sectional curvature}, J. Math. Kyoto
  Univ. {\bf 30}, no. 3, 493--516, 1990


\bibitem{Hasegawa} I. Hasegawa, {\it H-projective-recurrent K\"ahlerian
    manifolds and Bochner-recurrent K\"ahlerian manifolds,} Hokkaido
  Math. J. {\bf 3}, 271--278, 1974


\bibitem{HF} I. Hasegawa, S. Fujimura, {\it On holomorphically
    projective transformations of Kaehlerian manifolds.}
  Math. Japon. {\bf 42}, no. 1, 99--104, 1995
  
  \bibitem{hasegawa} I. Hasegawa, K.  Yamauchi, {\it
 Infinitesimal projective transformations on tangent bundles with lift connections,}
  Sci. Math. Jpn. {\bf 57}, no. 3, 469--483, 2003

\bibitem{Hiramatu} H. Hiramatu, \emph{Riemannian manifolds admitting a
   projective vector field}, Kodai Math. J. {\bf 3}, no. 3, 397--406, 1980

\bibitem{HiramatuK} H. Hiramatu, {\it Integral inequalities in
    K\"ahlerian manifolds and their applications,} Period. Math. Hungar.
  {\bf 12}, no. 1, 37--47, 1981


\bibitem{Ishihara1957} S. Ishihara, \emph{Holomorphically projective
    changes and their groups in an almost complex manifold}, Tohoku
  Math. J. (2) {\bf  9}, 273--297, 1957

\bibitem{Ishihara1960} S. Ishihara, S. Tachibana, {\it On
    infinitesimal holomorphically projective transformations in K\"ahlerian manifolds}, Tohoku Math. J. (2) {\bf 12}, 77--101, 1960

\bibitem{Ishihara1961} S. Ishihara, S. Tachibana, {\it A note on
    holomorphic projective transformations of a Kaehlerian space with
    parallel Ricci tensor}, Tohoku Math. J. (2) {\bf 13}, 193--200, 1961


\bibitem{Kiyo1997} K. Kiyohara, \emph{Two classes of Riemannian
    manifolds whose geodesic flows are integrable},
  Mem. Amer. Math. Soc. {\bf 130}, no. 619, viii+143 pp., 1997

\bibitem{Kiyohara2010} K. Kiyohara, P. J.  Topalov, \emph{On Liouville
    integrability of $h$-projectively equivalent K\"ahler metrics},
  Proc. Amer. Math. Soc. {\bf 139}(2011), 231--242. 



\bibitem{Levi1896} T. Levi-Civita, \emph{Sulle trasformazioni delle equazioni dinamiche}, Ann. di math.
  {\bf 24}, 255--300, 1896


\bibitem{lichnerowicz}
A. Lichnerowicz, {\it Geometry of groups of transformations.
}
Translated from the French and edited by Michael Cole,
  Noordhoff International Publishing, Leyden, 1977.



\bibitem{MT} V. S. Matveev, P. J. Topalov, \emph{Trajectory
   equivalence and corresponding integrals}, Regular and Chaotic
  Dynamics {\bf 3}, no. 2, 30--45, 1998

\bibitem{M2002} V. S. Matveev, \emph{Low-dimensional manifolds
  admitting metrics with the same geodesics}, Contemporary
 Mathematics {\bf 308}, 229--243, 2002

\bibitem{M2003} V. S. Matveev, \emph{Three-dimensional manifolds
   having metrics with the same geodesics}, Topology {\bf  42}, no. 6, 1371--1395, 2003

\bibitem{Inventiones2003} V. S. Matveev, \emph{Hyperbolic manifolds
    are geodesically rigid}, Invent. math.  {\bf 151}, no. 3, 579--609,
  2003

\bibitem{M2004} V. S. Matveev, \emph{Die Vermutung von Obata f\"ur
    Dimension 2}, Arch. Math. {\bf  82}, no. 4, 273--281, 2004

\bibitem{M2004bis} V. S. Matveev, \emph{Solodovnikov's theorem in
    dimension two}, Dokl. Akad. Nauk {\bf 396}, no. 1, 25--27, 2004

\bibitem{M2005} V. S. Matveev, \emph{Closed manifolds admitting
   metrics with the same geodesics}, Proceedings of SPT2004 (Cala
 Gonone). World Scientific, 198--208, 2005

\bibitem{CMH} V. S. Matveev, \emph{Lichnerowicz-Obata conjecture in
   dimension two}, Comm. Math. Helv. {\bf 80}, no. 3, 541--570, 2005
   
\bibitem{zametki}  V. S. Matveev,  \emph{ The eigenvalues of Sinjukov's operator are globally ordered}, 
 Mathematical Notes, {\bf 77}(2005) no. 3-4,  380-390. 
 
\bibitem{M2006} V. S. Matveev, \emph{On degree of mobility for complete
   metrics}, Adv. Stud. Pure Math. {\bf 43}, 221--250, 2006

\bibitem{beltrami_short} V. S. Matveev, \emph{Geometric explanation of
   the Beltrami theorem}, Int. J. Geom. Methods Mod. Phys. \textbf{3},
  no. 3, 623--629, 2006


\bibitem{Matveev2007} V. S. Matveev, \emph{Proof of the projective
    Lichnerowicz-Obata conjecture}, J. Diff. Geom. {\bf 75}, no. 3, 459--502, 2007








\bibitem{DomMik1978} J. Mikes, V. V. Domashev, \emph{On The Theory Of
    Holomorphically Projective Mappings Of Kaehlerian Spaces},
  Math. Zametki {\bf 23}, no. 2, 297--303, 1978

\bibitem{Mikes1996} J. Mikes, \emph{Geodesic Mappings Of
   Affine-Connected And Riemannian Spaces}, Journal of Math. Sciences
 {\bf 78}, no. 3, 311--333, 1996

\bibitem{Mikes} J. Mikes, \emph{Holomorphically projective mappings
    and their generalizations.}, J. Math. Sci. (New
  York) {\bf 89}, no. 3, 1334--1353, 1998

\bibitem{Moroianu} A. Moroianu, U. Semmelmann, \emph{Twistor forms on Kähler manifolds}, Ann. Sc. Norm. Super. Pisa Cl. Sci. (5) 2,  no. 4, 823--845, 2003

\bibitem{nagano} T. Nagano, T. Ochiai, {\it On compact Riemannian manifolds
admitting essential projective transformations},  J. Fac. Sci.
Univ. Tokyo Sect. IA, Math. {\bf 33}(1986)  233--246.

\bibitem{obata} M.  Obata, \emph{
Riemannian manifolds admitting a solution of a certain system of differential equations,}  Proc. U.S.-Japan Seminar in Differential Geometry (Kyoto, 1965) pp. 101--114.  

\bibitem{Obata} M. Obata, \emph{The conjectures about conformal
    transformations}, J. Diff. Geometry {\bf 6}, 247--258, 1971


\bibitem{Otsuki1954} T. Otsuki, Y. Tashiro, \emph{On curves in
    Kaehlerian spaces}, Math. Journal of Okayama University {\bf 4}, 57--78, 1954
\bibitem{Sakaguchi} T. Sakaguchi, \emph{On the holomorphically projective correspondence between K\"ahlerian spaces preserving complex structure}
, Hokkaido Mathematical Journal {\bf33}, no. 2, 203--212, 1974

\bibitem{Schoen} R. Schoen, \emph{On the conformal and CR automorphism
    groups}, Geom. Funct. Anal. {\bf 5}, no. 2, 464--481, 1995

\bibitem{Semmelmann0} U. Semmelmann,  \emph{Conformal Killing forms on Riemannian manifolds}, Habilitationschrift, Universität München (2002),  arXiv:math.DG/0206117

\bibitem{Semmelmann1} U. Semmelmann, \emph{Conformal Killing forms on Riemannian manifolds}, Math. Z.  {\bf 245}, no. 3, 503--527, 2003


\bibitem{Sinjukov} N. S. Sinjukov, {\it Geodesic mappings of
    Riemannian spaces.} (in Russian) ``Nauka'', Moscow, 1979,
  MR0552022, Zbl 0637.53020.

\bibitem{Sin2} N. S. Sinyukov, {\it Holomorphically projective
    mappings of special K\"ahlerian spaces}, Mat. Zametki, {\bf 36},
  no. 3, 417--423, 1984


\bibitem{Takeda} H. Takeda, Hidehiro, Y. Watanabe, {\it On Riemannian
   spaces with parallel Weyl's projective curvature tensor,}
 Kyungpook Math. J. {\bf 12}, 37--41, 1972

\bibitem{Tanaka} N. Tanaka, {\it Projective connections and projective
   transformations,} Nagoya Math.  J. {\bf 12}, 1--24, 1957

\bibitem{Tanno1978} S. Tanno, \emph{Some Differential Equations On
    Riemannian Manifolds}, J. Math. Soc. Japan {\bf 30}, no. 3, 509--531, 1978

\bibitem{Tashiro1956} Y. Tashiro, \emph{On A Holomorphically
    Projective Correspondence In An Almost Complex Space},
  Math. Journal of Okayama University 6, 147--152, 1956

\bibitem{dedicata} P. J. Topalov and V. S. Matveev, \emph{Geodesic
   equivalence via integrability}, Geometriae Dedicata \textbf{96}, 91--115, 2003


\bibitem{Top2003} P.  J. Topalov, \emph{Geodesic Compatibility And
    Integrability Of Geodesic Flows}, Journal of Mathematical Physics
  {\bf 44}, no. 2, 913--929, 2003
  
\bibitem{Yamauchi1} K.  Yamauchi, {\it  On infinitesimal projective transformations,}  Hokkaido Math. J. {\bf 3}(1974), 262--270.



\bibitem{Yano1952} K. Yano, \emph{On harmonic and Killing vector
   fields}, Ann. of Math. (2) {\bf 55}, 38--45, 1952
    
\bibitem{Yanobook} K. Yano, {\it Differential geometry on complex and
    almost complex spaces.}  International Series of Monographs in
  Pure and Applied Mathematics,  {\bf 49} A Pergamon Press
  Book. The Macmillan Co., New York 1965 xii+326 pp.
  

\bibitem{Yano1981} K. Yano, H. Hiramatu, \emph{Isometry Of Kaehlerian
    Manifolds To Complex Projective Spaces}, J. Math. Soc. Japan {\bf 33},
  no. 1, 67--78, 1981

\bibitem{Yoshimatsu} Y. Yoshimatsu, {\it $H$-projective connections
    and $H$-projective transformations,} Osaka J. Math.  {\bf
    15}, no. 2, 435--459, 1978

\end{thebibliography}

\end{document}